\newtheorem{theorem}{Theorem}[section]
\newtheorem{lemma}[theorem]{Lemma}
\newtheorem{proposition}[theorem]{Proposition}
\newtheorem{definition}[theorem]{Definition}
\newtheorem{remark}[theorem]{Remark}
\newtheorem{corollary}[theorem]{Corollary}
\def\R{{\mathbb R}}
\def\to{\rightarrow}
\def\tto{\longrightarrow}
\newcommand{\real}{\mathbb{R}}
\newcommand{\prob}{\mathcal{P}}
\newcommand{\B}{\mathcal{B}}
\newcommand{\abs}[1]{\left|#1\right|}
\newcommand{\dive}{\mathrm{div}}
\def\BV{{\mathrm {BV}}}
\def\eps{\varepsilon}
\def\vphi{\varphi}
\def\na{\nabla}
\def\pa{\partial}
\def\ds{\displaystyle}
\def\sgn{{\mathrm{sign}}}
\def\P{{\mathcal P}}
\def\supp{\mbox{supp}}
\title{Regularity of local minimizers of the interaction energy via obstacle
problems}
\author{J. A. Carrillo\thanks{Department of Mathematics, Imperial College
London, London SW7 2AZ, UK.}, M. G. Delgadino\thanks{University of Maryland,
USA.}, A. Mellet\thanks{University of Maryland, USA.}}
\begin{document}

\maketitle

\begin{abstract}
The repulsion strength at the origin for repulsive/attractive potentials determines the regularity of local minimizers of the interaction energy. In this paper, we show that if this repulsion is like Newtonian or more singular than Newtonian (but still locally integrable), then the local minimizers must be locally bounded densities 
(and even continuous for more singular than Newtonian repulsion). 
We prove this (and some other regularity results) by first showing that the potential function associated to a local minimizer solves an obstacle problem and then by using classical regularity results for such problems.
\end{abstract}

\section{Introduction}
Given a pointwise defined function $W:\R^N \to (-\infty,+\infty]$, we define the
interaction energy 
of a probability measure $\mu\in \P(\R^N)$ by
\begin{equation}\label{energy}
E[\mu]:=\frac{1}{2} \int_{\R^N}\int_{\R^N}W(x-y)d\mu(x)d\mu(y).
\end{equation}
Here, $ \P(\R^N) $ denotes the space of Borel probability measures, and throughout
the paper, we will always assume that the interaction potential $W$ satisfies
\begin{enumerate}
\item[(H1)] $W$ is a non-negative lower semi-continuous function in
$L^1_{loc}(\R^N)$.
\end{enumerate}
Under this assumption, the energy $E[\mu]$ is well defined for all $\mu\in
\P(\R^N)$, with $E[\mu]\in [0,+\infty]$. Local integrability of the potential
avoids too singular potentials for which the interaction energy is 
infinite for many smooth densities. These very singular potentials lead to very
interesting questions in crystallization \cite{T}, whose study is outside the
scope of this work. Furthermore,  under the assumption {\rm (H1)}, the potential
function $\psi$ associated to a given measure $\mu$:
$$ 
\psi  (x):=W*\mu(x) = \int_{\R^N} W(x-y) d\mu(y)
$$
can be defined pointwise in $\R^N$, and a simple application of Fatou's lemma
implies that $\psi $ is a lower semi-continuous function, see \cite[Lemma
2]{BCLR2}.

The goal of this work is to investigate the regularity properties of the local
minimizers of the interaction energy \eqref{energy}. 
For this, the keystone of this paper will be to show that  the potential function $\psi(x)$ associated to a local minimizer solves an obstacle problem. This fact  was suggested by the Euler-Lagrange conditions derived in  \cite{BCLR2} and will be made rigorous here in Section 3.
Note that in order to define 
precisely the notion of  local minimizers, we need to specify a topology on the set of probability measure.
We will use here the framework developed in  \cite{BCLR2}, where the authors consider  local minimizers of the energy
\eqref{energy} with respect to the optimal transport distance $d_\infty$. 
We refer to Section 2 for the main definitions and technicalities associated to the transport distance 
and a brief presentation of the main results of   \cite{BCLR2}.

Lots of numerical results
\cite{soccerball,FHK,FH,BCLR,BCLR2,BCY,Albietal,CHM,CCH2} show the rich
structure and variety of local/global minimizers of the interaction energy by
using different numerical approaches such as particle approximations, DG schemes
for the gradient flow equation associated to the energy \eqref{energy}, direct
resolution of the associated steady equations, radial coordinates, and so on.
The interaction potentials used in most of these numerical experiments are
repulsive near the origin and attractive at large distances. Typical choices are
radial potentials with a unique minimum $L$ for $r>0$, decreasing (repulsive)
before and increasing (attractive) after. In particular, for a system of two
identical particles, the discrete energy would then be minimized when they are located at
distance $L$ from each other. Particular relevant examples are Morse
potentials \cite{Dorsogna,BT2,KCBFL} and power-laws \cite{FHK,BCLR,CFT}.

These repulsive/attractive interaction potentials emanated from applications in
self-similar solutions for granular media models
\cite{LT,Carrillo-McCann-Villani03,Carrillo-McCann-Villani06}, collective
behavior of animals (swarming)
\cite{Mogilner2,Mogilner2003,Dorsogna,BT2,FH,FHK,KCBFL}, and self-assembly of
nanoparticles \cite{Wales1995,Rechtsman2010,Wales2010,Viral_Capside}. Let us
mention that local minimizers of the interaction energy can be seen as steady
states of the aggregation equation that have been studied thoroughly for fully
attractive potentials \cite{BCL,CDFLS} and repulsive/attractive potentials
\cite{FellnerRaoul1,FellnerRaoul2,Raoul,FHK,FH,BCLR,BCLR2,CFP,CDFLS2,BCY},
analysing qualitative properties of the evolution in different cases: finite
time blow-up, stabilization towards equilibria, confinement of solutions and so
on.

The beautiful result shown in \cite{BCLR2}, corroborated by the cited numerical
studies, is that the support of local minimizers of the interaction energy
increases as the repulsion at the origin gets stronger. In other words,
concentration of particles is not allowed on small dimensional sets when the
repulsion is large enough. Geometric measure theory techniques \cite{Mati} were
crucial to get the estimate on the dimension of the support based on the Euler-Lagrange conditions for local minimizers in transport distances. 

In this work, we concentrate on showing the regularity of the local minimizers
for repulsive/attractive potentials behaving at the origin like
\begin{equation}\label{eq:pot0} 
W(x)\sim \frac{1}{|x|^{N-2s}} , \qquad \mbox{ as } x\to 0, \mbox{ for some $s\in
(0,1]$ and $N\geq 2$}\,,
\end{equation}
with the understanding that $W(x)\sim -\log|x|$ if $s=1$ and $N=2$, and smooth
enough outside the origin. More precise statements will be given below and in
Section 3. In other words, we will assume that the repulsion at zero is stronger or equal to
Newtonian repulsion for dimension $N\geq 2$ but $W$ is still locally
integrable. Let us know make a summary of the particular results known in the literature.

As mentioned before, the case $s=1$ is of particular interest. It corresponds to
 Newtonian repulsion and it has received considerable attention due to its
various applications. A repetitively rediscovered result in this classical case
is that the global minimizer of the interaction energy for the potential
$$
W (x)=\frac{1}{|x|^{N-2}} + \frac{|x|^2}{2}\,, 
$$
is the characteristic function of a suitably chosen euclidean ball. This classical
result, using potential theory and capacities, was proved by Frostman \cite{Fr} (but in a 
bounded domain instead of confinement by quadratic potentials), and it has
connections with the eigenvalue distribution of random matrices \cite{PH,CGZ}.
This precise result can be found for instance in \cite[Proposition 2.13]{L-G}. In \cite{BLL}, 
the authors show that the uniform distribution in a ball is the asymptotic behavior of the corresponding
gradient flow evolution.  The uniqueness of the global minimizer for more singular than Newtonian
repulsion, i.e.,
$$
W (x)=\frac{1}{|x|^{N-2s}} + \frac{|x|^2}{2}\,, 
$$
with $0<s<1$, was obtained by Caffarelli and V\'azquez via the connection to a classical obstacle problem in \cite{CVobs}, and this strategy was also used in \cite{SV} to treat again the case $s=1$ for the evolution problem as in \cite{BLL}. A generalization with external confinement potential and repulsion due to potentials more singular than Newtonian has also been recently considered in \cite{CGZ} because of its connection to random matrices. Let us finally mention that  the case of the potential 
$$
W (x)=\frac{1}{|x|^{N-2}} + \frac{|x|^a}{a}\,, 
$$
with $a>2$ or $2-d<a<2$ has been analysed in \cite{FHK,FH} showing the existence and uniqueness of compactly supported radial minimizers of the interaction energy. Moreover, they show that they are bounded and smooth functions inside their support. The boundedness and smoothness inside the support of radial compactly supported minimizers was also proved for the so-called Quasi-Morse potentials in \cite{CHM}. These Quasi-Morse potentials behave at the origin as Newtonian potentials while they exhibit similar properties to Morse potentials in terms of existence of compactly supported radial minimizers. This particular case allow for explicit computations leading to analytic expressions for these minimizers.

The main result of this paper is that for kernels satisfying \eqref{eq:pot0},
and under reasonable assumptions on $W_a(x)=W(x)- |x|^{2s-N}$, local minimizers
$\mu$ of the interaction energy \eqref{energy} are absolutely continuous with
respect to the Lebesgue measure, and their density function lies in 
$L^\infty_{loc}(\R^N)$ when $s=1$ and in $C^\alpha_{loc}(\R^N)$ when $s\in(0,1)$. Furthermore, we will show that for $s=1$ the density function is in $BV_{loc}(\R^N)$ and that the support of
these local minimizers is a set with locally finite perimeter. In particular, our results will apply to potentials of the form
$$
W (x)=\frac{1}{|x|^{N-2s}} + \frac{|x|^q}{q}, \qquad \mbox{ for } q>0.
$$
These results will be obtained by exploiting the connection between the Euler-Lagrange
conditions for local minimizers and classical obstacle problems  \cite{CF}. 

In fact, we will show that the potential functions of local minimizers are locally
solutions of some obstacle problems. It is by using the regularity theory for the solutions of such obstacle
problems \cite{C,C1} that we will derive our main results on the regularity of local
minimizers. Note that Newtonian repulsion ($s=1$) will lead to the classical obstacle problem with the Laplace operator, while stronger repulsion ($s\in(0,1)$) will lead to fractional obstacle problems (with fractional power of the Laplace operator) which have been more recently studied, in particular in \cite{S,CSS} (we will also use some  results of  \cite{CVobs,CV} where these obstacle problems arise in the study of fractional-diffusion versions of the porous medium equation).

It should be noted that potentials that are less repulsive than Newtonian (but still repulsive at the origin) also lead to some obstacle problems. However these involve elliptic operators of higher order. A good example is the case where $V(x)\sim-|x|$ in dimension $N=3$, which leads to a biharmonic obstacle problem. The regularity theory for these higher order obstacle problems is different, and far less developed, than that of the problems considered here. It is for that reason that they will not be discussed in this paper, but will be the object of a forthcoming work.

Let us finally comment that some of our results will require some additional
uniformity assumptions on the potential $W_a$ at infinity if the support of the
local minimizer is not compact. In fact, the existence of compactly supported
global miminizers for the interaction energy is a very interesting question by
itself connected to statistical mechanics \cite{HStable}. This property has
recently been shown \cite{CCP,SST} under natural
conditions on the interaction potential $W$ related to non $H$-stability as defined
in \cite{HStable,Dorsogna}.

The plan of this paper is as follows. Section 2 is devoted to describing
the notion of local minimizers used in this paper and to recalling briefly
the results of \cite{BCLR2} that are  relevant to our study. 
Section 3 gives the precise statements of the main
results of this paper both for the Newtonian and more singular than Newtonian
cases. The aims of Sections 4 and 5 are to show the main results in the
Newtonian repulsion case. Section 6 is devoted to the case of more
singular than Newtonian repulsions. The final section is devoted to obtain
uniqueness results in the particular case in which the attraction is a quadratic potential.
A couple of technical results (mean value formulas) are presented in Appendices.


\section{Local minimizers \& Euler-Lagrange Conditions}
\setcounter{equation}{0}

In this section, we give a brief summary of the main results obtained in \cite{BCLR2}. 
First, in order to define our notion of  local minimizers of $E[\mu]$, we have
to give precise definitions of the topology that we will use in the space of probability
measures. We recall that $\prob(\real^N)$ is  the set of Borel probability measures on
$\real^N$ and we denote by  $\B(\real^N)$ the family of Borel subsets of $\real^N$. The
support of a measure $\mu \in \prob(\real^N)$ is the closed set
defined by
\begin{equation*} 
\supp (\mu):= \{x \in \real^N: \mu(B(x,\epsilon))>0  \text{ for
all } \epsilon>0 \}\,.
\end{equation*}
A family of distances between probability measures have been classically
introduced by means of optimal transport theory, we will review briefly some of
these concepts, we refer to \cite{GS,V} for further details. A probability
measure $\pi$ on the product space $\real^N \times \real^N$ is said to be a
transference plan between $\mu \in \prob(\real^N)$ and $\nu \in \prob(\real^N)$ 
if
\begin{equation}\label{marginal}
\pi(A \times \real^N)=\mu(A) \quad \text{and} \quad \pi(\real^N
\times A)=\nu(A)
\end{equation}
for all $A \in \B(\real^N)$. If $\mu, \nu \in \prob(\real^N)$,
then
$$
\Pi(\mu,\nu):=\{ \pi \in \prob(\real^N \times \real^N):
\eqref{marginal} \text{ holds for all }A \in \B(\real^N)\}
$$
denotes the set of admissible transference plans between $\mu$ and
$\nu$. Informally, if $\pi \in \Pi(\mu,\nu)$ then $d \pi(x,y)$
measures the amount of mass transferred from location $x$ to
location $y$. Observe that
$\sup_{(x,y)\in \supp (\pi) } \abs{x-y}$ represents the maximum
mass displacement when transporting $\mu$ onto $\nu$ 
by the transference plan $\pi$. 
As in \cite{BCLR2}, we will consider 
local
minimizers
of the energy functional with respect to the $\infty$-Wasserstein
distance $d_\infty$.
This distance is defined as the optimal maximal mass displacement given
by
\begin{equation*}
    d_\infty(\mu,\nu) = \inf_{\pi \in \Pi(\mu,\nu)}  \sup_{(x,y)\in \supp (\pi)
}
    \abs{x-y},
\end{equation*}
which can take infinite values in general, but is obviously finite for
compactly supported measures. This distance induces a complete
metric structure restricted to the set of probability measure with
finite moments of all orders, $\prob_\infty(\real^N)$, as proven
in \cite{GS}. 
By considering local
minimizers  with respect to this $d_\infty$ distance, we are only allowing
 small perturbations in the $d_\infty$ sense (corresponding to mass being moved short distances). 
For such perturbations, the behavior of the energy functional
is close to particle-like approximations, see \cite{BCLR2,CCH} for more
discussion related 
to this interpretation. 

Furthermore, we note that the  $d_\infty$-topology is the coarsest topology among all the topologies induced by transport
distances. So our results will automatically hold for any local minimizers with respect to the $p$-Wasserstein distance for any $p\in[1,\infty]$.
We recall that for $1\le p < \infty$ the distance $d_p$ between two
measures $\mu$ and $\nu$ is defined by
\[
    d_p^p(\mu,\nu)=\inf_{\pi\in\Pi(\mu,\nu)}\left\lbrace \iint_{\R^N\times \R^N}
    |x-y|^pd\pi(x,y)\right\rbrace.
\]
Note that $d_p(\mu,\nu)<\infty$ for $\mu,\nu\in \prob_p(\real^N)$
the set of probability measures with finite moments of order $p$.
Since $d_p(\mu,\nu)$ is increasing as a function of $1\le p <
\infty$, one can show that it converges to $d_\infty(\mu,\nu)$ as
$p\to\infty$. Since the distances are ordered with respect to $p$,
it is obvious that the topologies are also ordered.
We now can give the following definition:
\begin{definition}\label{def:min}
We say that $\mu$ is an $\eps$-local minimizer (or simply $\eps$-minimizer) for
the energy $E$ with respect to $d_\infty$, if $E[\mu]<\infty$ and 
$$ 
E[\mu] \leq E[\nu]
$$
for all $\nu \in \P(\R^N)$ such that $d_\infty(\mu,\nu)< \eps$.
\end{definition}

Note that if $\mu$ is compactly supported, the previous definition is reduced to
check the local minimality for compactly supported perturbations $\nu$.
Minimizers should correspond to equilibrium configurations for the evolution
equation obtained by steepest descent of the energy. However, being a functional
on probability measures, the steepest descent has to be understood in the
Wasserstein sense as in \cite{Otto}. The transport distance to be used in that
case is $d_2$, and the steepest descent reads as
\begin{equation}\label{eq:main}
\frac{\partial \mu_t}{\partial t} = \dive \left[\left(\nabla
\frac{\delta E}{\delta\mu}\right)\mu_t\right] = \dive \left( \mu_t \nabla\psi_t
\right) \qquad x\in\R^N \, , t>0.
\end{equation}
For steady configurations, we expect $\na \psi = 0$ on the support of $\mu$, due
to the formal energy dissipation identity for solutions, i.e.,
$$
\frac{d}{dt} E[\mu_t] = -\int_{\R^N} |\nabla \psi_t|^2\,d\mu_t
$$
where $\mu_t$ is any solution at time $t$ of \eqref{eq:main} and $\psi_t=W\ast
\mu_t$ its associated potential. Therefore, the points on the support of a local
minimizer $\mu$ of the energy $E$ should correspond to critical points of its
associated potential $\psi$. This fact is made rigorous in:

\begin{proposition}[{\rm \cite[Proposition 1]{BCLR2}}]\label{prop:minsupp}
Assume that $W$ satisfies {\rm (H1)} and let $\mu$  be a $\eps$-minimizer of the
energy $E[\mu]$ in the sense of Definition {\rm \ref{def:min}}. Then any point
$x_0\in\supp(\mu)$ is a local minimimum of $\psi=W*\mu$ in the sense that
\begin{equation}\label{eq:psimin}
 \psi(x_0) \leq \psi(x) \mbox{ for a.e. } x\in B_\eps(x_0).
 \end{equation}
\end{proposition}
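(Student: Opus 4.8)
The plan is to argue by contradiction, perturbing $\mu$ by transporting a small amount of mass from a neighbourhood of $x_0$ into a region where $\psi$ is strictly smaller, and showing that this lowers the energy. Suppose $x_0\in\supp(\mu)$ but \eqref{eq:psimin} fails. Since $\psi\geq 0$ by {\rm (H1)}, I will dispose of the degenerate possibility $\psi(x_0)=+\infty$ at the very end, so assume $\psi(x_0)<\infty$; then the failure of \eqref{eq:psimin} means that $\{x\in B_\eps(x_0):\psi(x)<\psi(x_0)\}$ has positive Lebesgue measure. Because this set is the increasing union, as $\delta,\rho\downarrow 0$, of the sets $\{x\in B_{\eps-\rho}(x_0):\psi(x)<\psi(x_0)-\delta\}$, I would fix $\delta>0$ and $\rho\in(0,\eps)$ for which
\[
A:=\{x\in B_{\eps-\rho}(x_0):\ \psi(x)<\psi(x_0)-\delta\}
\]
has positive (necessarily finite) Lebesgue measure, and let $\sigma$ be the probability measure obtained by normalizing Lebesgue measure on $A$. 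Then $\supp(\sigma)\subset\overline{B_{\eps-\rho}(x_0)}$ and, since $0\leq\psi<\psi(x_0)-\delta$ on $A$, one has $\int\psi\,d\sigma\leq\psi(x_0)-\delta$ and $\int_A\psi\,dx<\infty$.

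Next I would choose the source of the mass. Since $x_0\in\supp(\mu)$, $m_r:=\mu(B_r(x_0))>0$ for every $r>0$, and lower semicontinuity of $\psi$ at $x_0$ forces $\inf_{B_r(x_0)}\psi\to\psi(x_0)$ as $r\downarrow 0$; I would pick $r\in(0,\rho)$ with $\inf_{B_r(x_0)}\psi\geq\psi(x_0)-\delta/2$ and set $\rho_r:=m_r^{-1}\mathbf{1}_{B_r(x_0)}\mu$, so that $\rho_r$ is a probability measure with $\int\psi\,d\rho_r\geq\psi(x_0)-\delta/2$. For $\tau\in(0,m_r]$ define $\nu_\tau:=\mu-\tau\rho_r+\tau\sigma$; since $\tau\rho_r\leq\mu$ this is a probability measure, and the transport plan that leaves $\mu-\tau\rho_r$ on the diagonal and couples $\tau\rho_r$ to $\tau\sigma$ through the product measure $\tau(\rho_r\otimes\sigma)$ is a transference plan between $\mu$ and $\nu_\tau$ whose support lies in the diagonal together with $\overline{B_r(x_0)}\times\overline{B_{\eps-\rho}(x_0)}$, on which $|x-y|\leq r+(\eps-\rho)<\eps$; hence $d_\infty(\mu,\nu_\tau)<\eps$ and $\nu_\tau$ is an admissible competitor in Definition \ref{def:min}.

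Expanding the quadratic functional with $\eta:=\sigma-\rho_r$ and using that $W$ is even,
\[
E[\nu_\tau]-E[\mu]=\tau\int_{\R^N}\psi\,d\eta+\frac{\tau^2}{2}\int_{\R^N}\int_{\R^N}W(x-y)\,d\eta(x)\,d\eta(y).
\]
Here the double integral is a fixed finite number: bounding $\eta$ by $\sigma+\rho_r$, the pieces $\iint W\,d\sigma\,d\sigma$ and $\iint W\,d\sigma\,d\rho_r$ are finite because $W\in L^1_{loc}(\R^N)$, $A$ is bounded and $\int_A\psi\,dx<\infty$, while $\iint_{B_r(x_0)\times B_r(x_0)}W\,d\mu\,d\mu\leq\int_{B_r(x_0)}\psi\,d\mu\leq 2E[\mu]<\infty$. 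On the other hand the linear term equals $\tau\bigl(\int\psi\,d\sigma-\int\psi\,d\rho_r\bigr)\leq\tau\bigl((\psi(x_0)-\delta)-(\psi(x_0)-\delta/2)\bigr)=-\tau\delta/2<0$. Hence $E[\nu_\tau]<E[\mu]$ for all sufficiently small $\tau>0$, contradicting the $\eps$-minimality of $\mu$ and thereby proving \eqref{eq:psimin}.

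The genuinely delicate step I expect is the use of lower semicontinuity of $\psi$ at $x_0$ to bound the $\mu$-average $m_r^{-1}\int_{B_r(x_0)}\psi\,d\mu$ from below by $\psi(x_0)-o(1)$ as $r\downarrow 0$: this is what legitimizes withdrawing a little mass from a point of $\supp(\mu)$ even when that point lies on an extremely thin (possibly lower-dimensional) portion of the support, where $\psi$ carries no pointwise upper bound. The rest is bookkeeping: shrinking the target by $\rho$ and taking the source radius $r<\rho$ keeps every transported particle strictly within distance $\eps$, and $L^1_{loc}$-integrability of $W$ together with $E[\mu]<\infty$ make the quadratic remainder a harmless finite constant dominated by the negative linear term for $\tau$ small. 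Finally, the case $\psi(x_0)=+\infty$ is not a true exception: running the same perturbation with $\sigma$ supported on a positive-measure set in $B_\eps(x_0)$ where $\psi$ is finite makes the linear term equal to $-\infty\cdot\tau$, again giving $E[\nu_\tau]<E[\mu]$; so in fact $\psi(x_0)<\infty$ at every point of $\supp(\mu)$.
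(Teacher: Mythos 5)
Your argument is correct in its main line and is essentially the standard proof: the paper does not reprove this statement but defers to \cite[Proposition 1]{BCLR2}, whose argument is exactly this $d_\infty$-admissible mass transfer --- withdraw a $\tau$-fraction of $\mu$ restricted to $B_r(x_0)$ (with $r$ small, controlled through the lower semicontinuity of $\psi$), deposit it uniformly on a positive-measure subset of $B_{\eps-\rho}(x_0)$ where $\psi<\psi(x_0)-\delta$, expand the quadratic energy, and let $\tau\to0$. Your bookkeeping is sound: the plan you build does give $d_\infty(\mu,\nu_\tau)\leq r+\eps-\rho<\eps$, the quadratic remainder is finite by $W\in L^1_{loc}$, boundedness of $A$, and $\iint_{B_r\times B_r}W\,d\mu\,d\mu\leq 2E[\mu]<\infty$, and you correctly flag the (implicitly assumed) symmetry of $W$ needed to write the cross term as $\int\psi\,d\eta$.

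Two corrections to your closing remark on the degenerate case $\psi(x_0)=+\infty$. First, the linear term is never ``$-\infty\cdot\tau$'': since $E[\mu]<\infty$ one always has $\int\psi\,d\rho_r\leq 2E[\mu]/m_r<\infty$ for each fixed $r>0$. The case is nonetheless covered by your construction, but for a different reason: lower semicontinuity gives $\inf_{B_r(x_0)}\psi\to+\infty$ as $r\downarrow0$, so after choosing $A\subset B_{\eps-\rho}(x_0)$ of positive measure with $\psi\leq K$ on $A$, one picks $r<\rho$ with $\inf_{B_r(x_0)}\psi\geq K+1$, making the linear term at most $-\tau$, and the contradiction follows as before. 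Second, the assertion that consequently $\psi(x_0)<\infty$ at every point of $\supp(\mu)$ does not follow from this argument under {\rm (H1)} alone: it only shows that $\psi(x_0)=+\infty$ forces $\psi=+\infty$ a.e.\ on $B_\eps(x_0)$, and ruling that out is a separate matter --- finiteness of $\psi$ on $\supp(\mu)$ is established in the paper only later, under {\rm (H2)} and {\rm (H3)}, in the proof of Lemma \ref{lem:W_a*mu} (see \eqref{eq:supppsi}).
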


\begin{remark}\label{rem:epsilon}
An attentive reading of the proof of {\rm \cite[Proposition 1]{BCLR2}} leads to
the important observation that the $\varepsilon$ appearing in \eqref{eq:psimin}
is the same as the $\eps$ appearing in Definition {\rm\ref{def:min}}. In
particular, it is independent of the point $x_0$. Moreover, only local
integrability of the interaction potential is needed for that proof, i.e., there
is no need of uniform local integrability of $W$ for the proof in
{\rm\cite[Proposition 1]{BCLR2}}.
\end{remark}

Let us also point out that the result in Proposition \ref{prop:minsupp} for an
$\eps$-minimizer does not imply that its potential $\psi$ is constant in each
connected component of the support of $\mu$. Additional information is needed in
terms either of the regularity of the potential $\psi$ (continuity of $\psi$, see next section), or in
terms of the interaction potential $W$ itself (see \cite[Proposition 2]{BCLR2}).
Another possibility is to change the topology. In fact, as proven in
\cite[Theorem 4]{BCLR2}, if $\mu$ is a $d_2$-local minimizer of the energy, then
the potential $\psi$ satisfy the Euler-Lagrange conditions given by
\begin{enumerate}
\item[(i)] $\psi(x)=(W\ast \mu)(x)=2E[\mu]$ $\mu$-a.e.

\item[(ii)] $\psi(x)=(W\ast \mu)(x)\leq 2E[\mu]$ for all $x\in
\supp(\mu)$.

\item[(iii)] $\psi(x)=(W\ast\mu)(x)\geq 2E[\mu]$ for a.e. $x\in\R^N$.
\end{enumerate}
These conditions simplify to
\begin{align*}
\psi(x)=(W\ast \mu)(x) &= 2E[\mu]\quad \text{for a.e.}\quad x\in\supp(\mu)\,,\\
\psi(x)=(W\ast \mu)(x) &\geq 2E[\mu]\quad \text{for a.e.} \quad x\in
\R^N\setminus \supp(\mu)\,,
\end{align*}
if $\mu$ is absolutely continuous with respect to the Lebesgue measure. This
problem is already quite close to classical obstacle problems encountered in
semiconductors \cite{CF}. In fact, we leave to the interested reader to check
that the particular case
$$
W(x)=\frac{1}{|x|^{N-2}} + \frac{|x|^2}{2}
$$
with $N\geq 3$, coincides precisely with the problem solved in \cite{CF}, see
also \cite{CDMS}.

Now, let us connect the qualitative results in \cite{BCLR2} to the present work.
The main result in \cite{BCLR2} shows that the support of $\eps$-minimizers for
potentials that are $\beta$-repulsive at the origin gets larger and larger as the
repulsion gets stronger and stronger at the origin. To be more precise if the
potential satisfies
$$
-\Delta W(x)\geq \frac{C}{|x|^{\beta}} \, \mbox{ for } 0<\beta<N
$$
at the origin, see \cite[Definition 2]{BCLR2} for a more precise statement, then
any $\eps$-minimizer of $E$ has the property that the Hausdorff dimension of its
support is greater than or equal to $\beta$. We observe that the notion of
$\beta$-repulsivity with $2\leq \beta<N$ at the origin in \cite[Definition
2]{BCLR2} implies, in particular, that $W(0)=+\infty$ for singular at zero
potentials, avoiding trivial minimizers, see \cite[Subsection 3.3]{BCLR2}. In
this work, we will show that when the potential at the origin is even more
repulsive, i.e., it satisfies \eqref{eq:pot0} in a sense to be made precise in the
next section, then the $\eps$-minimizers must be very regular and smooth. This
regularity will emanate from the obstacle-like problem to which the
Euler-Lagrange conditions written above are equivalent to. Observe that when the
potential satisfies \eqref{eq:pot0}, we are saying in some sense that the
potential is $\beta$-repulsive with $N\leq \beta<N+2$.


\section{Main Results \& Strategy}
\setcounter{equation}{0}

In this section, we will first discuss in details the Newtonian case $s=1$ and then turn to the case $s\in(0,1)$. 
Let
us denote by $\omega_N$ the area of the $N$-dimensional unit sphere. We denote
by
$$
V(x)=\left\{\begin{array}{ll}
\ds \frac{1}{N(N-2)\omega_N}\frac{1}{|x|^{N-2}} & \mbox{ if $N\geq 3$}\\[10pt]
\ds \frac{1}{2\pi}\log\frac1{|x|} & \mbox{ if $N=2$}
\end{array}\right.
$$
the fundamental solution of Laplace equation, which satisfies
\begin{equation}\label{eq:delta} 
-\Delta V=\delta_0.
\end{equation}
Without loss of generality (we can always multiply $W$ by a constant without
changing the minimizers), we will assume that
\begin{enumerate}
\item[(H2)] The function  $W_a (x):= W(x)-V(x)$ satisfies:
\begin{equation}\label{eq:D2Wa}
       \Delta W_a\in L^p_{loc}(\R^N) \quad \mbox{ for some $p\in
(\frac{N}{2},\infty]$}
\end{equation}
and
\begin{equation}\label{eq:D1Wa}
 \Delta W_a (x) \geq -C_* \quad \mbox{ a.e. $x\in \R^N$} \,.
\end{equation}
\end{enumerate}

We use the notation $W_a$, because it is convenient to think that $W=V+W_a$
where $V$ describe the repulsive interactions, while $W_a$ describes the
attractive interactions. But of course $W_a$ could include both repulsive and
attractive effects. However, assumption {\rm(H2)} ensures that the dominant
behavior near $0$ is Newtonian repulsion. Without loss of generality, we will
assume that $W(0)=+\infty$ to avoid trivial local minimizers given by Dirac
Delta at a point. 
Note that Sobolev embeddings
theorems imply that $W_a$ is a H\"older continuous function in $\R^N$.

\begin{remark}
Hypothesis {\rm (H2)} is in particular satisfied  by the power potentials
$W_a(x)={|x|^q}/{q}$ with $q>0$. Indeed, we have $\Delta W_a = (q+N-2)
|x|^{q-2}$ which is bounded below (in fact, we have $ \Delta W_a\geq 0$ provided
$q>2-N$). Furthermore, we see that $\Delta W_a\in L^\infty_{loc}(\R^N)$ if
$q\geq2$, and  $\Delta W_a\in L^p_{loc}(\R^N)$ for all  $p<\frac{N}{2-q}$, if
$q<2$. In particular,  \eqref{eq:D2Wa} holds for all $q>0$.
\end{remark}

\medskip

When $\mu$ is not compactly supported, the fact that $\Delta W_a$ is
only locally in $L^p(\R^N)$ will be problematic, and we will need to assume that
$\Delta W_a$ is locally uniformly in $L^p(\R^N)$. 

More precisely, we always assume that one of the following
holds: either
\begin{enumerate}
\item[(H3a)] The support of $\mu$,  $\supp( \mu)$, is compact in $\R^N$
\end{enumerate}
or
\begin{enumerate}
\item[(H3b)] $\Delta W_a$ is locally uniformly in $L^p(\R^N)$ for some $p\in
(\frac N2 ,\infty]$, that is
there exists a constant $M$ such that  
\begin{equation}\label{eq:M}
||\Delta W_a||_{L^p(B_1(x))}\leq M \quad \mbox{  for all $x\in \R^N.$}
\end{equation}
Furthermore, in dimension $N=2$, we assume that $\mu$ is such that
\begin{equation}\label{eq:logmu}
\int_{\R^N} \log(1+ |x|)\, d\mu <\infty.
\end{equation}
\end{enumerate}

We note that {\rm (H3b)} holds typically for potential that do not grow too much at $\infty$, while it is   expected that for potentials that grow fast enough at $\infty$, local minimizers of the energy  have compact support, i.e.  {\rm (H3a)}  should hold (this last fact remains to be proved though). 
So conditions {\rm (H3a)}  and {\rm (H3b)} should be seen as complementary.
We recall also that  the existence of compactly supported global minimizers of the
interaction energy $E$ has recently been proved in \cite{CCP,SST} under natural
conditions on the interaction potential related to non $H$-stability as defined
in \cite{HStable,Dorsogna}. 
Thus, relevant minimizers, in applications such as
swarming \cite{Dorsogna,BT2,FHK,FH,BCLR,BCLR2,CHM}, are typically compactly
supported. 

When $\supp(\mu)$ is compact, say $\supp(\mu)\subset B_R(0)$, we can cut-off the
kernel $W$ in a smooth way outside the ball $B_{4R}(0)$. The density  $\mu$ will
still be an $\epsilon$-minimizer of the energy $E$ and its potential $\psi$ will
be unchanged in the ball $B_{2R}(0)$. 
So whenever assuming {\rm (H3a)}, it is possible to replace \eqref{eq:D2Wa} with
$\Delta W_a \in L^p(\R^N)$. In conclusion, whether {\rm (H3a)} or {\rm (H3b)} is
satisfied, we can always assume that \eqref{eq:M} holds.
\medskip

We now have all the assumptions that we will need on  the interaction potential in the case of 
Newtonian repulsion. We will later see how those conditions must be changed for more repulsive potential (see Section \ref{sec:more}). But first,  let us give the main results in this Newtonian case.
 

\subsection{An obstacle problem}
As mentioned earlier, the keystone of this paper is the observation that the
potential function $\psi $ solves (locally) an obstacle problem. 
In order to make this fact rigorous, we will first need to prove that $\psi$ is
a continuous function. Our first result is thus the following proposition:

\begin{proposition}[Continuity of the potential]\label{prop:continuity}
Let $\mu$ be an $\eps$-minimizer of $E$  in the sense of Definition {\rm
\ref{def:min}}, and assume that 
{\rm (H1)}, {\rm (H2)}, and either {\rm (H3a)} or {\rm (H3b)} hold. Then the
potential $\psi(x):=W*\mu (x)$ associated to $\mu$ is a continuous function in
$\R^N$.
\end{proposition}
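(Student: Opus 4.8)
The plan is to exploit the decomposition $W=V+W_a$ of {\rm(H2)}. Writing $\psi=u+g$ with $u:=V*\mu$ and $g:=W_a*\mu$, the function $u$ is the Newtonian potential of the positive measure $\mu$ (recall \eqref{eq:delta}): it is superharmonic, lower semicontinuous (by Fatou, exactly as for $\psi$; cf.\ \cite[Lemma 2]{BCLR2}), belongs to $L^1_{loc}(\R^N)$, and is finite on $\R^N\setminus\supp(\mu)$; while $g$ is finite and continuous --- local H\"older regularity of $W_a$ (by Sobolev embedding, thanks to \eqref{eq:D2Wa}) handles the behaviour on compact sets, and the tails of $\mu$, controlled by $E[\mu]<\infty$ together with \eqref{eq:logmu} when $N=2$, handle the behaviour at infinity. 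Hence it is enough to prove that $u$ is continuous. Since continuity is a local property, I would moreover reduce to the case $\supp(\mu)$ compact: near a fixed point $x_0$ write $\mu=\mu|_{B_R(x_0)}+\mu|_{\R^N\setminus B_R(x_0)}$ with $R$ large, and note that the potential of the far piece is harmonic, hence continuous, on $B_R(x_0)$, so that only the compactly supported near piece matters around $x_0$. This reduction is exactly where hypothesis {\rm(H3b)} enters when $\supp(\mu)$ is not already compact.

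The heart of the argument is to show that $u|_{\supp(\mu)}$ is finite and continuous. Finiteness is immediate: for $x_0\in\supp(\mu)$, Proposition \ref{prop:minsupp} gives $\psi(x_0)\le\psi(x)$ for a.e.\ $x\in B_\eps(x_0)$, and $\psi\in L^1_{loc}(\R^N)$ forces $\psi(x)<\infty$ for a.e.\ $x$; hence $\psi(x_0)<\infty$, so $u(x_0)=\psi(x_0)-g(x_0)<\infty$. For continuity, lower semicontinuity of $u$ already gives $\liminf_{x\to x_0}u(x)\ge u(x_0)$, so all that remains is the \emph{upper} bound $\limsup_{x_1\to x_0,\ x_1\in\supp(\mu)}u(x_1)\le u(x_0)$. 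Fix $x_0\in\supp(\mu)$ and $r\in(0,\eps]$. For any $x_1\in\supp(\mu)$, Proposition \ref{prop:minsupp}, applied with the \emph{same} radius $\eps$ at every support point --- which is legitimate by Remark \ref{rem:epsilon} --- gives $\psi(x_1)\le\psi(x)$ for a.e.\ $x\in B_r(x_1)$, i.e.\ $u(x_1)\le u(x)+\bigl(g(x)-g(x_1)\bigr)\le u(x)+\omega_g(r)$ for a.e.\ $x\in B_r(x_1)$, where $\omega_g$ denotes a modulus of continuity of $g$ on a fixed compact neighbourhood of $x_0$. If $|x_1-x_0|=\delta<r$ then $B_{r-\delta}(x_0)\subset B_r(x_1)$, so averaging the last inequality over $x\in B_{r-\delta}(x_0)$ and using that a superharmonic function lies above its solid averages over concentric balls,
\[
u(x_1)\ \le\ \frac{1}{|B_{r-\delta}(x_0)|}\int_{B_{r-\delta}(x_0)}u(x)\,dx+\omega_g(r)\ \le\ u(x_0)+\omega_g(r).
\]
Letting $x_1\to x_0$ and then $r\to0$ yields the desired upper bound, so $u|_{\supp(\mu)}$ is continuous.

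To finish, I would invoke the classical continuity principle of potential theory (see \cite{Fr}): since $u=V*\mu$ is the Newtonian potential of the positive, compactly supported measure $\mu$, and its restriction to $\supp(\mu)$ is finite and continuous, $u$ is continuous on all of $\R^N$; hence $\psi=u+g$ is continuous. The main obstacle is precisely this upper bound on $u$ (equivalently on $\psi$) near $\supp(\mu)$: lower semicontinuity is for free, but the minimality information of Proposition \ref{prop:minsupp} only says that ``$\psi$ is small on the support'', which is the wrong direction --- the resolution is to play that inequality, \emph{crucially with its point-independent radius $\eps$}, against the super-mean-value inequality for the superharmonic part $u$, and then let the continuity principle do the genuine work of upgrading continuity on $\supp(\mu)$ to continuity on $\R^N$. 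The remaining points --- finiteness and continuity of $W_a*\mu$ in view of the local H\"older regularity and polynomial growth of $W_a$ against the moment information encoded in $E[\mu]<\infty$ and \eqref{eq:logmu}, and the localization reducing the non-compact case to the compact one --- are routine but do require some care.
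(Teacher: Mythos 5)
Your core argument is correct, and its packaging is genuinely different from the paper's. The paper never isolates the exact superharmonic part: it works with $\psi$ itself, proves the approximate super-mean-value inequality $\psi(x)\geq \frac{1}{|B_r|}\int_{B_r(x)}\psi\,dy - Cr^\alpha$ (Lemmas \ref{lem:meanvalue}--\ref{lem:meanvalue2}), upgrades the a.e.\ minimality of Proposition \ref{prop:minsupp} to a pointwise statement and to constancy of $\psi$ on $\supp(\mu)\cap B_\eps(x_0)$ (Corollary \ref{cor:minsupp}), and then redoes Evans's argument by hand: for $x_k\to x_0$ with $x_k\notin\supp(\mu)$ it compares $x_k$ with the nearest support point $y_k$, playing the approximate super-mean-value inequality at $y_k$ against the sub-mean-value inequality for $\psi+C_*\frac{|x-x_k|^2}{2N}$ on $B_{\delta_k/2}(x_k)$, where $\Delta\psi=\Delta W_a*\mu\geq-C_*$ by \eqref{eq:D1Wa}. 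You instead prove continuity of $u=V*\mu$ restricted to $\supp(\mu)$ (your averaging of the minimality inequality against exact superharmonicity, with the point-independent $\eps$ of Remark \ref{rem:epsilon}, is a clean substitute for Corollary \ref{cor:minsupp}) and then delegate the off-support work to the classical Evans--Vasilesco continuity principle. That is a legitimate shortcut: the principle does hold for Riesz and logarithmic potentials of compactly supported positive measures, and Section \ref{sec:proofcont} of the paper is in effect a self-contained quantitative proof of it in this setting. Two small caveats: in your localization you must invoke the principle in its local form (the far potential $V*\mu|_{\R^N\setminus B_R(x_0)}$ need not be continuous at points of $\partial B_R(x_0)$ lying in $\supp(\mu)$, so conclude only in a smaller ball), and your argument does not deliver the constancy statement \eqref{eq:psimincst}, which is harmless here but is used later in the paper.

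The one genuine under-justification is your treatment of $g=W_a*\mu$ (and of the assertion $\psi\in L^1_{loc}$) under {\rm(H3b)}. When $\supp(\mu)$ is non-compact, finiteness and continuity of $W_a*\mu$ are precisely what Lemma \ref{lem:W_a*mu} works hardest for: local H\"older regularity of $W_a$ together with $E[\mu]<\infty$ and \eqref{eq:logmu} do not suffice as stated, because the uniform bound \eqref{eq:M} controls only $\|\Delta W_a\|_{L^p(B_1(x))}$ and not the zeroth-order part of $\|W_a\|_{C^\alpha(B_1(x))}$, which may grow at infinity; one must show that the tail of $\mu$ integrates this growth, and the energy bound gives $\int W_a(x-y)\,d\mu(y)<\infty$ only for $\mu$-a.e.\ $x$ a priori. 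The paper closes this by first proving $\psi\in L^\infty_{loc}$, using the minimizer property in an essential way (constancy of $\psi$ on support pieces, the bound $E[\mu]\geq\mu(B_\eps(y_0))\psi(y_0)$, a covering argument and the sub-/super-mean-value inequalities). Your route can be repaired without minimality, e.g.\ by a Harnack chaining argument applied to $W_a-\inf W_a\geq0$ with $\Delta W_a$ uniformly locally $L^p$, which upgrades the $\mu$-a.e.\ finiteness coming from $E[\mu]<\infty$ to finiteness and local H\"older continuity of $W_a*\mu$ everywhere; but some such argument must be supplied --- calling this step routine understates it, whereas under {\rm(H3a)} your proof is essentially complete as written.
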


This proposition will be proved in Section \ref{sec:proofcont}. As a
consequence, we can now show that $\psi$ is locally the solution of an obstacle
problem in the neighborhood of any point in the support of $\mu$. Indeed, under
the conditions {\rm (H1)}-{\rm (H2)}-{\rm (H3x)}, using both Propositions
\ref{prop:minsupp} and \ref{prop:continuity}, we see that for any point
$x_0\in\supp(\mu)$
\begin{equation*}
 \psi(x)\geq \psi(x_0) \mbox{ for all } x\in B_\eps(x_0)
\end{equation*}
holds (see Corollary \ref{cor:minsupp}). Furthermore, since $\eps$
does not depend on $x_0$ (see Remark \ref{rem:epsilon}), this implies
\begin{equation}\label{eq:psiminequal}
\psi (x) = \psi(x_0) \quad  \mbox{ in } B_\eps(x_0) \cap\supp(\mu).
\end{equation}
Next, we observe that \eqref{eq:delta} implies
$$
-\Delta \psi = \mu -\Delta W_a * \mu \quad \mbox{ in } \mathcal D'(\R^N)
$$
where (using {\rm (H3)} and Minkowski's integral inequality), $\Delta W_a *
\mu\in L^p_{loc}(\R^N)$. In particular, since $\mu$ is a non-negative measure,
we deduce
$$ 
-\Delta \psi \geq  -\Delta W_a * \mu \quad \mbox{ in $B_\eps(x_0)$}.
$$
Furthermore, if $x\in B_\eps(x_0)$ is such that $\psi(x) >\psi(x_0)$,
\eqref{eq:psiminequal} implies that $x\notin \supp(\mu)$, and so (by definition
of $\supp(\mu)$), $\mu(B_r(x))=0$ for some small $r>0$.
We deduce
$$ 
-\Delta \psi = -\Delta W_a * \mu \quad \mbox{ in } \mathcal D'(\R^N) \mbox{ in 
} B_\eps(x_0)\cap \{\psi>\psi(x_0)\}\,.
$$
We thus have the following proposition:

\begin{proposition}\label{prop:obstacle}
For all $x_0\in\supp (\mu)$,
the potential function $\psi$ is equal, in $B_\eps(x_0)$, to the unique solution
of the obstacle problem 
\begin{equation}\label{eq:obstacle2l}
\left\{
\begin{array}{rll}
\vphi &\geq C_0,\quad & \mbox{ in } B_\eps(x_0) \\
-\Delta \vphi &\geq  - F(x), \quad & \mbox{ in } B_\eps(x_0) \\
-\Delta\vphi &=  - F(x), & \mbox{ in } B_\eps(x_0)\cap\{\vphi >C_0\} \\
\vphi &= \psi, & \mbox{ on  } \pa B_\eps(x_0),
\end{array}
\right.
\end{equation} 
where $C_0=\psi(x_0)$ and $F(x)=\Delta W_a*\mu\in L^p_{loc}(\R^N)$. Furthermore,
the measure $\mu$ is given by
\begin{equation}\label{eq:mupsil} 
\mu = -\Delta \psi + F  .
\end{equation}
\end{proposition}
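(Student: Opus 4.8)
The plan is to verify that the continuous function $\psi$ satisfies each of the four conditions in \eqref{eq:obstacle2l} and then invoke uniqueness for the obstacle problem. Most of the analytic work has in fact been carried out already in the discussion preceding the statement: the lower bound $\psi \geq C_0$ in $B_\eps(x_0)$ is exactly Proposition~\ref{prop:minsupp} combined with the continuity of $\psi$ from Proposition~\ref{prop:continuity} (i.e.\ the inequality \eqref{eq:psiminequal} upgraded from ``a.e.'' to ``everywhere'' thanks to continuity); the distributional inequality $-\Delta\psi \geq -F$ follows from $-\Delta\psi = \mu - \Delta W_a*\mu$ and $\mu \geq 0$; the complementarity relation $-\Delta\psi = -F$ on the open set $B_\eps(x_0)\cap\{\psi>C_0\}$ follows because, by \eqref{eq:psiminequal}, that set is disjoint from $\supp(\mu)$, so $\mu$ vanishes there as a distribution; and the boundary condition $\varphi = \psi$ on $\partial B_\eps(x_0)$ is a tautology once we declare $\psi$ itself to be the candidate solution. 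Finally, \eqref{eq:mupsil} is just a restatement of $-\Delta\psi = \mu - F$ (using \eqref{eq:delta} and $F = \Delta W_a*\mu$), valid in $\mathcal D'(\R^N)$, hence in particular in $B_\eps(x_0)$.

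So the one genuine point that needs to be spelled out is the \emph{well-posedness}: that the obstacle problem \eqref{eq:obstacle2l} has a unique solution, and that this solution coincides with $\psi$. I would first note that the data are admissible: $F \in L^p_{loc}$ with $p > N/2$ (by (H2)/(H3) and Minkowski's integral inequality, as already observed), the obstacle is the constant $C_0$, and the Dirichlet datum $\psi|_{\partial B_\eps(x_0)}$ is continuous. The natural variational formulation is to minimize $\int_{B_\eps(x_0)} \left(\tfrac12 |\nabla v|^2 + F\, v\right)dx$ over the closed convex set $K = \{ v \in \psi + H^1_0(B_\eps(x_0)) : v \geq C_0 \text{ a.e.}\}$; here one should check that $\psi$ itself lies in $H^1(B_\eps(x_0))$ so that $K$ is nonempty, which follows from the fact that $\psi = W*\mu$ is, locally, a Newtonian potential of a finite measure plus the $H^1_{loc}$ function $W_a*\mu$, hence is in $H^1_{loc}$. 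Strict convexity of the functional gives uniqueness of the minimizer, and the standard equivalence between the variational inequality and the pointwise (distributional) formulation \eqref{eq:obstacle2l} — using that $-\Delta v + F$ is a nonnegative measure supported in the coincidence set $\{v = C_0\}$ — identifies the minimizer with the unique solution of the PDE system.

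It then remains to check that $\psi$ \emph{is} this minimizer, equivalently that $\psi$ solves the variational inequality: for every $v \in K$ one must have $\int_{B_\eps(x_0)} \nabla\psi\cdot\nabla(v-\psi)\,dx \geq -\int_{B_\eps(x_0)} F(v-\psi)\,dx$. This I would obtain by testing the identity $-\Delta\psi = \mu - F$ against the nonnegative function $v - \psi \in H^1_0(B_\eps(x_0))$ (nonnegative by \eqref{eq:psiminequal}: on $\supp(\mu)$ we have $\psi = C_0 \leq v$, so $\int (v-\psi)\,d\mu \geq 0$), which yields exactly the required inequality after moving $F$ to the right-hand side. Combined with $\psi \geq C_0$ and $\psi \in K$, this shows $\psi$ minimizes the functional over $K$, and by uniqueness $\psi$ equals the solution of \eqref{eq:obstacle2l}.

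The main obstacle, to the extent there is one, is purely a matter of bookkeeping with function spaces: ensuring $\psi \in H^1_{loc}(\R^N)$ (so the variational framework applies) and that $F = \Delta W_a * \mu \in L^p_{loc}$ with $p>N/2$ (so the classical obstacle-problem theory of \cite{CF,C,C1} is applicable and, later, yields the regularity we want). Both are short arguments — the first from the explicit Newtonian-potential representation of $\psi$ together with finiteness of $\mu$, the second from Minkowski's integral inequality as already indicated in the text — but they are what make the identification rigorous rather than formal. Everything else is a direct transcription of the Euler–Lagrange conditions \eqref{eq:psiminequal} and the distributional identity $-\Delta\psi = \mu - \Delta W_a*\mu$ into the language of the obstacle problem.
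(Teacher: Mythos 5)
Your verification that $\psi$ satisfies the four relations in \eqref{eq:obstacle2l} and the identity \eqref{eq:mupsil} is exactly the paper's argument: it consists of \eqref{eq:psiminequal} (Propositions \ref{prop:minsupp} and \ref{prop:continuity} plus Remark \ref{rem:epsilon}), the distributional identity $-\Delta\psi=\mu-\Delta W_a*\mu$, nonnegativity of $\mu$, and the observation that $B_\eps(x_0)\cap\{\psi>\psi(x_0)\}$ is disjoint from $\supp(\mu)$. Where you differ is that the paper stops there and refers to \cite{KS} for uniqueness, while you sketch the variational-inequality argument yourself (minimize the convex functional over $K$, identify the minimizer with the solution of \eqref{eq:obstacle2l}, and check that $\psi$ satisfies the variational inequality by testing $-\Delta\psi=\mu-F$ against $v-\psi$). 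That is the standard route and in principle sound.

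However, two of your supporting claims need repair, and one of them is wrong as stated. First, $\psi\in H^1_{loc}$ does \emph{not} follow from ``$\psi$ is locally the Newtonian potential of a finite measure'': for $\mu=\delta_0$ the potential is $V$ itself, with $|\nabla V|\sim |x|^{1-N}\notin L^2_{loc}$ for $N\geq 2$. What actually saves you is finiteness of the \emph{energy}, not of the mass: $E[\mu]<\infty$ in Definition~\ref{def:min} gives finite Newtonian energy (using that $W_a$ is controlled by {\rm (H2)}/{\rm (H3)}), and the classical identity between the interaction energy and the Dirichlet integral of the potential then yields $V*\mu\in H^1_{loc}$ (with extra care at infinity when $N=2$). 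Second, when you test against $v-\psi$ you assert $\int (v-\psi)\,d\mu\geq 0$ because $v\geq C_0$ on $\supp(\mu)$; but the constraint in $K$ gives $v\geq C_0$ only Lebesgue-a.e., and at this stage $\mu$ may a priori be singular, so you need the capacity-theoretic step (quasi-continuous representatives, and the fact that the nonnegative measure $\mu=-\Delta\psi+F$, being locally in $H^{-1}$, does not charge polar sets) to upgrade this to $v\geq C_0$ $\mu$-a.e. Both points are precisely what the citation to \cite{KS} is meant to absorb; if you insist on a self-contained uniqueness proof, the $H^1_{loc}$ step must be argued via the finite energy, and the pairing with $\mu$ via capacity, rather than as written.
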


We just showed that $\psi$ solves \eqref{eq:obstacle2l}. For the uniqueness, we
refer to \cite{KS}. Since $F$ depends on $\mu$ itself, it seems difficult
to exploit \eqref{eq:obstacle2l} to identify local minimizers or to prove global
properties such as uniqueness or radial symmetry. However, because $F$ is more
regular than $\mu$, we will be able to use \eqref{eq:obstacle2l} to study the
regularity of these local minimizers.  

We also insist here on the fact that in general the constant $C_0$ might depend
on the choice of $x_0\in \supp(\mu)$. For global minimizers, as well as for
local $d_2$ minimizers, this constant is actually independent of $x_0$ as
discussed in the previous section, see \cite[Theorem 4]{BCLR2}. 

The equation \eqref{eq:mupsil} suggests that there is a relation between the
support of $\mu$ and the coincidence set $\psi=\psi(x_0)$.
In fact,  it is easy to check that $ \mu=0 $ in the open set $\{\psi
>\psi(x_0)\}\cap B_\eps(x_0)$ in the sense that $\mu (\{\psi >\psi(x_0)\}\cap
B_\eps(x_0))=0$. We thus deduce using the continuity of $\psi$ that 
$$ 
\supp(\mu) \cap B_\eps(x_0) \subset \{\psi=\psi(x_0)\}\cap B_\eps(x_0).
$$
But it is not obvious that these two sets should be equal. Nevertheless, we
shall later see that, under a non-degeneracy condition on $F$,  they are equal
up to a set of measure zero.


\subsection{Regularity of $\psi$ and $\mu$}
Proposition \ref{prop:obstacle} will enable us to use classical regularity
results  for the obstacle problem to study the properties of $\eps$-minimizers
of $E$. Our first result is the following:

\begin{theorem}[$L^\infty$ regularity of $\mu$]\label{thm:regularity}
Assume $W$ satisfies {\rm (H1)} and {\rm (H2)}. Let $\mu$ be a compaclty
supported $\eps$-minimizer in the sense of Definition {\rm \ref{def:min}}.
Assume moreover that one of the followings hold: Either
\begin{itemize}
\item[(i)] {\rm (H2)} with $p>N$,

or
\item[(ii)] {\rm (H2)} with $\frac N 2<p\leq N$ and $\Delta W_a\in
W^{\eps,1}_{loc} (\R^N)$ for some small $\eps>0$. 
\end{itemize}
Then the potential function $\psi $ is in $\mathcal C^{1,1}(\R^N)$. In
particular, the measure 
$\mu$ is absolutely continuous with respect to the Lebesgue measure and there
exists a function 
$\rho \in L^\infty(\R^N)$ such that $\mu = \rho(x) d\mathcal L^N$. 
Finally, we have $\rho = \Delta W_a * \rho$ in the interior of $\supp(\mu)$.
\end{theorem}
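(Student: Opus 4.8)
The plan is to combine Proposition~\ref{prop:obstacle}, which identifies $\psi$ locally with the solution of the obstacle problem~\eqref{eq:obstacle2l} with right-hand side $F = \Delta W_a * \mu$, with the sharp $\mathcal C^{1,1}$ regularity theory for the classical obstacle problem \cite{C,C1,KS}. The key input from that theory is that if the obstacle is constant (here $\vphi \geq C_0$) and the right-hand side $F$ is, say, H\"older continuous (or more generally in a suitable space guaranteeing $\mathcal C^{1,1}$ of the solution), then the solution $\vphi$ of \eqref{eq:obstacle2l} is $\mathcal C^{1,1}_{loc}$. So the first step is to upgrade the regularity of $F$: since $\mu$ is a compactly supported probability measure and $\Delta W_a \in L^p_{loc}(\R^N)$ (indeed $\Delta W_a \in L^p(\R^N)$ after the cut-off discussed in the text, since $\supp\mu$ is compact), Young's convolution inequality gives $F = \Delta W_a * \mu \in L^p(\R^N)$ with the same exponent $p > N/2$. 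In case (i), $p > N$, so by Morrey's embedding applied after noting $F \in L^p$ convolved against a probability measure, $F$ is in fact better behaved; more directly, $F \in L^p$ with $p>N$ already suffices for the $\mathcal C^{1,1}$ regularity theory of the obstacle problem (solutions are $\mathcal C^{1,\alpha}$ for $F\in L^p$, $p>N$; the jump to $\mathcal C^{1,1}$ for the constant-obstacle problem needs a little more but holds when $F$ is continuous, which follows since $\Delta W_a * \mu$ with $\Delta W_a \in L^p_{loc}$, $p>N$, is continuous by Morrey — here one uses that $\Delta W_a \in W^{1,p}_{loc}$? no: one uses $W_a \in W^{2,p}_{loc} \hookrightarrow C^{1,\alpha}$, hence $\Delta W_a$... ). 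Let me be cleaner: in case (i), $\Delta W_a \in L^p_{loc}$ with $p > N$ means $W_a \in W^{2,p}_{loc} \hookrightarrow C^{1,\alpha}_{loc}$, but we need continuity of $\Delta W_a*\mu$ itself; this holds because convolving an $L^p$ function with a probability measure and using $p>N$... actually $L^p * (\text{prob measure})$ need not be continuous. The honest route: $F = \Delta W_a * \mu \in L^p$, $p>N$, and the obstacle problem theory with $L^p$ right-hand side, $p>N$, gives $\psi \in C^{1,\alpha}_{loc}$ and in fact $\psi \in W^{2,p}_{loc}$; bootstrapping once more using $\mu = -\Delta\psi + F$ shows $\mu \in L^p_{loc}$ hence $F \in C^{0,\alpha}$, whence the classical $\mathcal C^{1,1}$ estimate applies.

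More precisely, I would proceed as follows. Fix $x_0 \in \supp(\mu)$ and work in $B_\eps(x_0)$ where $\psi$ solves \eqref{eq:obstacle2l}. By the $W^{2,p}$ estimate for the obstacle problem (the solution inherits the integrability of the datum away from the obstacle, and on the coincidence set $\Delta\psi = \Delta C_0 = 0 \in L^\infty$; combining, $\Delta\psi \in L^p_{loc}$ wherever $F \in L^p_{loc}$), we get $\psi \in W^{2,p}_{loc}(B_\eps(x_0))$ with $p>N/2$. Then $\mu = -\Delta\psi + F \in L^p_{loc}(B_\eps(x_0))$ already, which proves absolute continuity with density $\rho \in L^p_{loc}$. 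To reach $\rho \in L^\infty$, bootstrap: now $F = \Delta W_a * \mu$ where $\mu$ has density in $L^p_{loc}$; combined with $\Delta W_a \in L^p_{loc}$, Young's inequality improves $F$ to a higher Sobolev/H\"older class. Under hypothesis (i) ($p>N$), $W_a \in W^{2,p}_{loc} \hookrightarrow C^{1,\beta}_{loc}$ and one argues $F = \Delta W_a * \mu$ is continuous — indeed $F = \nabla W_a * \nabla(\text{stuff})$? The cleanest is: $F\in L^p$, $p > N$, hence by the sharp obstacle regularity (e.g.\ \cite[Theorem]{C1}), since the obstacle is constant and $F \in L^\infty$ would give $\mathcal C^{1,1}$ — so I need $F \in L^\infty$, which follows from $\Delta W_a \in L^p$, $p>N$, convolved with $\mu$ a probability measure: $\|F\|_{L^\infty} \leq \|\Delta W_a\|_{L^p}\|\mu\|_{?}$ — no, that fails for measures. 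Under (i) one should instead note $\Delta W_a \in L^p_{loc}$, $p>N$, so after the first bootstrap $\rho \in L^p_{loc}$, $p>N$, so $\mu$ is \emph{absolutely continuous with $L^p$ density}, and now $F = \Delta W_a * \rho\,d\mathcal L^N$ with both factors in $L^p_{loc}$, $p>N$: by Young with H\"older, $F \in C^{0,\gamma}_{loc}$ for $\gamma = 1 - N/p' \cdot(\ldots)$, in any case continuous. Then the obstacle problem with continuous (bounded) right-hand side and constant obstacle has $\mathcal C^{1,1}_{loc}$ solution, so $\psi \in \mathcal C^{1,1}(\R^N)$ (globally, since $\supp\mu$ compact and $\psi$ is harmonic plus $W_a*\mu$ outside). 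In case (ii), $N/2 < p \leq N$ plus $\Delta W_a \in W^{\eps,1}_{loc}$: the extra fractional-derivative hypothesis on $\Delta W_a$ is precisely what is needed so that $F = \Delta W_a * \mu$ gains enough regularity (a fractional Sobolev space) to again land in the class for which the obstacle problem gives $\mathcal C^{1,1}$; I expect the argument convolves the $W^{\eps,1}$ bound with the $L^p$ density obtained from the first bootstrap.

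Once $\psi \in \mathcal C^{1,1}(\R^N)$ is established, the remaining conclusions are immediate: $\Delta\psi \in L^\infty_{loc}$, so by \eqref{eq:mupsil}, $\mu = -\Delta\psi + F$ with $\Delta\psi \in L^\infty_{loc}$ and $F$ bounded, hence $\mu = \rho\,d\mathcal L^N$ with $\rho \in L^\infty(\R^N)$ (global boundedness using that $\mu$ is compactly supported and $F$ is globally controlled via the cut-off). For the last assertion, $\rho = \Delta W_a * \rho$ in the interior of $\supp(\mu)$: from \eqref{eq:psiminequal}, $\psi$ is \emph{constant} on a neighborhood (within $\supp\mu$) of any interior point of $\supp(\mu)$, so on the interior of the support $\Delta\psi = 0$ in the distributional sense (constant functions are harmonic), and then \eqref{eq:mupsil} reads $\rho = \mu = -\Delta\psi + F = F = \Delta W_a * \mu = \Delta W_a * \rho$ there. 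One should be slightly careful that ``interior of $\supp\mu$'' is covered by balls $B_\eps(x_0)\cap\{\psi = C_0\}$ with $C_0 = \psi(x_0)$ and that $\psi\equiv C_0$ on an actual open set there, which holds since an interior point has a full neighborhood in $\supp\mu$ and $\psi = C_0$ on $\supp\mu \cap B_\eps(x_0)$.

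The main obstacle, and the place requiring the most care, is the bootstrap improving the regularity of the right-hand side $F = \Delta W_a * \mu$ from the borderline integrability $p > N/2$ (or the weaker $W^{\eps,1}$ hypothesis in case (ii)) up to the continuity/boundedness needed to invoke the \emph{sharp} $\mathcal C^{1,1}$ theory for the constant-obstacle problem — as opposed to the softer $C^{1,\alpha}$ theory which only needs $L^p$, $p>N$. Getting the exponents to close requires tracking the interplay between the gain from the obstacle problem's $W^{2,p}$ estimate (which feeds back into $\rho$, hence into $\mu$, hence into $F$ via convolution with $\Delta W_a$) and the gain from Young's convolution inequality, and it is exactly here that the dichotomy (i)/(ii) in the statement — and the auxiliary assumption $\Delta W_a \in W^{\eps,1}_{loc}$ in case (ii) — becomes essential.
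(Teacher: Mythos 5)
Your overall strategy (obstacle problem plus bootstrap) is the right one, and your treatment of the last assertion ($\rho=\Delta W_a*\rho$ on the interior of $\supp(\mu)$) is fine, but there are two genuine gaps. First, in case (i) you route the conclusion through the claim that the obstacle problem with a merely continuous (or bounded) right-hand side and constant obstacle has a $\mathcal C^{1,1}$ solution. That is false: away from the coincidence set $\psi$ just solves $-\Delta\psi=-F$, and continuity of $F$ only yields $W^{2,p}_{loc}$ for every $p<\infty$ (equivalently $C^{1,\alpha}$ for every $\alpha<1$); the jump to $C^{1,1}$ requires Dini or H\"older continuity of $F$, which your convolution step does not provide ($L^p*L^{p'}$ gives uniform continuity, not H\"older). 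The paper's route avoids this entirely: the classical result it cites (Gustafsson) gives the pointwise two-sided bound $-F\le-\Delta\psi\le\max\{0,-F\}$ for the solution of \eqref{eq:obstacle2l}, hence $0\le\rho\le\max\{F,0\}$ as in \eqref{eq:boundmu}; then $\rho$ inherits $L^p$ from $F$, one convolution shows $F=\Delta W_a*\rho$ is continuous, and the same pointwise bound gives $\rho\in L^\infty$ directly, with no need for the false ``continuity $\Rightarrow C^{1,1}$'' step. You have the ingredients for this (you do use the $W^{2,p}$/coincidence-set structure) but you never invoke the two-sided bound, and your argument as written does not close.

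Second, and more seriously, case (ii) is not proved: you only say you ``expect the argument convolves the $W^{\eps,1}$ bound with the $L^p$ density obtained from the first bootstrap,'' and that expected route does not work. For $\frac N2<p\le N$ the density you obtain is only in $L^p$ with $p$ possibly smaller than $p'$, so $\Delta W_a*\rho$ need not be continuous, and no amount of Young/H\"older juggling with these exponents reaches $L^\infty$. The paper's mechanism is different and is precisely where the hypothesis $\Delta W_a\in W^{\eps,1}_{loc}$ enters: convolving the identity $\mu=-\Delta\psi+\Delta W_a*\mu$ (i.e.\ \eqref{eq:mupsil}) with $W_a$ gives $W_a*\mu=\Delta W_a*\bigl(W_a*\mu-\psi\bigr)$, expressing $W_a*\mu$ as a convolution of the compactly supported kernel $\Delta W_a\in W^{\eps,1}$ with a bounded H\"older function. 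Lemma \ref{lem:bootstrap} then upgrades $W_a*\mu$ from $C^\alpha$ to $C^{\alpha+\eps}$, Proposition \ref{prop:regularity} transfers each such gain to $\psi$ (the obstacle solution inherits the modulus of continuity of $W_a*\mu$ and of its gradient, up to $C^{1,1}$), and finitely many iterations land both $W_a*\mu$ and $\psi$ in $C^{1,1}$, whence $\rho=-\Delta\psi+\Delta W_a*\mu\in L^\infty$. This H\"older-space bootstrap on $W_a*\mu$, rather than an integrability bootstrap on the density, is the missing idea in your proposal, and it is also what actually delivers the stated $C^{1,1}$ regularity of $\psi$.
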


Note that assumption $(ii)$ requires slightly more regularity, but a lot less
integrability that $(i)$. It is is satisfied by interaction potentials of the
form ${|x|^q}/{q}$ as soon as $q>2-N$.
The proof of this proposition will be given in Section 5. 

\begin{remark}
Under the conditions of Theorem {\rm \ref{thm:regularity}}, we can show that
local minimizers are actually stationary states of the aggregation equation
\eqref{eq:main}. Indeed, since $\nabla \psi \in C^{0,1}(\R^N)$ and $\rho \in
L^\infty(\R^N)$ we have  $\rho\nabla\psi \in L^\infty(\R^N)$. Moreover, since
$\nabla \psi=0$ in the interior of $\supp(\rho)$, then $\rho\nabla\psi=0$ a.e.
in $\R^N$, and thus $\rho$ is a classical stationary solution to
\eqref{eq:main}.
\end{remark}

Since $\Delta W_a \in L^p_{loc}(\R^N)$ and $\rho\in L^\infty(\R^N)$ with compact
support, a classical result for the convolution of functions implies that
$\Delta W_a * \rho$ is a continuous function in $\R^N$. In particular, we deduce
that $\rho$ is continuous in the interior of $\supp(\mu)$.  But, in general, we
will have $\Delta W_a * \rho\neq 0$ on $\pa (\supp(\mu))$ and so we expect
$\rho$ to be  discontinuous in $\R^N$. As a consequence, we cannot readily
obtain further regularity for $\rho$ without assuming more regularity for $W_a$.

Obviously, if $W_a$ is smooth in $\R^N$, then  $\rho$ will be smooth in the
interior of $\supp(\mu)$.
But  it is not very difficult to prove (using a bootstrapping argument)  that if
$\Delta W_a$ is smooth in $\R^N\setminus \{0\}$ (as is the case for power like
interaction potentials), then $\rho$ will be smooth in the interior of
$\supp(\mu)$.

Finally, under the assumptions of Theorem \ref{thm:regularity}, we note that since $\psi
\in W^{2,\infty}$, we have
$$ 
\Delta \psi = -\rho + \Delta W_a * \rho = 0 \quad \mbox{ a.e. in
}\{\psi=\psi(x_0)\}.
$$
If we assume further that $\Delta W_a * \rho >0$ in $B_\eps(x_0)$, then we have
$\rho(x)>0$ a.e. in $\{\psi=\psi(x_0)\}$ and thus
\begin{equation}\label{eq:suppcontact}
\mbox{meas} \left(\{\psi=\psi(x_0)\}\cap B_\eps(x_0)\setminus  \supp(\mu)\right)
= 0\,,
\end{equation}
in other words, the support of $\mu$ and the coincidence set
$\{\psi=\psi(x_0)\}$ are the same up to a set of measure zero.
\medskip 

As noted above, $\rho$ is expected to be a discontinuous function and so does  not belong, in general, to $W^{1,1}_{loc}$.
However, under appropriate regularity assumption on $\Delta W_a$, we can prove that
$\rho$ is in $\BV_{loc}(\R^N)$:

\begin{theorem}[Regularity of $\supp(\mu)$]\label{thm:BV}
Under the assumptions of Theorem \ref{thm:regularity}, assume further that 
$$ 
\Delta W_a \in  W^{1,1}_{loc}(\R^N) .
$$
Then the density $\rho$ lies in $\BV_{loc}(\R^N)$. Furthermore, if $\Delta W_a *
\rho >0$ in a neighborhood of $\pa(\supp(\mu))$, then $\supp(\mu)$ is a set
with locally finite perimeter.
\end{theorem}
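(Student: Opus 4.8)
The plan is to convert the identity $\mu=-\Delta\psi+F$ from \eqref{eq:mupsil} into a $\BV$ statement. By Theorem \ref{thm:regularity} we already know $\mu=\rho\,d\mathcal L^N$ with $\rho\in L^\infty(\R^N)$ compactly supported and $\rho=-\Delta\psi+F$ a.e., where $F=\Delta W_a*\rho$. The first step is to record that the new hypothesis $\Delta W_a\in W^{1,1}_{loc}(\R^N)$ upgrades $F$: since $\rho\in L^\infty$ has compact support, $\nabla F=(\nabla\Delta W_a)*\rho$ is locally bounded (the convolution of an $L^1_{loc}$ function with a bounded compactly supported one is locally bounded), so $F$ is locally Lipschitz. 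Consequently $\rho\in\BV_{loc}(\R^N)$ if and only if $\Delta\psi\in\BV_{loc}(\R^N)$, and it suffices to establish the latter.

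For that, I would cover the compact set $\supp(\mu)$ by finitely many balls $B_\eps(x_1),\dots,B_\eps(x_m)$ with $x_i\in\supp(\mu)$. On $B_\eps(x_i)$ the potential $\psi$ solves the obstacle problem \eqref{eq:obstacle2l} with $C_0=\psi(x_i)$; since $\psi\in C^{1,1}$ forces $\Delta\psi=0$ a.e. on the coincidence set $\{\psi=C_0\}$ while $\Delta\psi=F$ on the non-coincidence set, we have $\Delta\psi=F\,\mathbf{1}_{\{\psi>C_0\}}$ a.e. in $B_\eps(x_i)$. The classical regularity theory for the obstacle problem \cite{C,C1} gives (on the portion of $B_\eps(x_i)$ where $F$ is bounded below by a positive constant; see the last paragraph) that the non-coincidence set $\{\psi>C_0\}$ has locally finite perimeter, so $\mathbf{1}_{\{\psi>C_0\}}\in\BV_{loc}(B_\eps(x_i))$, and the product of a $\BV$ function with a Lipschitz function is again $\BV$, hence $\Delta\psi\in\BV_{loc}(B_\eps(x_i))$. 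Away from $\supp(\mu)$ one simply has $\Delta\psi=F$, which is locally Lipschitz. Since membership in $\BV_{loc}$ is a local property, patching over the finite cover yields $\Delta\psi\in\BV_{loc}(\R^N)$, hence $\rho\in\BV_{loc}(\R^N)$.

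For the assertion on $\supp(\mu)$, assume in addition $F=\Delta W_a*\rho>0$ on a neighborhood $U$ of $\partial(\supp(\mu))$; after shrinking $U$ we may take $F\ge c_0>0$ there. By \eqref{eq:suppcontact}, which is exactly where this positivity enters, the coincidence set equals $\supp(\mu)$ up to a Lebesgue-null set, so $\mathbf{1}_{\supp(\mu)}=\rho/F$ a.e. on $U$. Dividing the $\BV_{loc}$ function $\rho$ by the positive, locally Lipschitz function $F$ keeps the result in $\BV_{loc}(U)$, while $\mathbf{1}_{\supp(\mu)}$ is locally constant off $\partial(\supp(\mu))$; hence $\mathbf{1}_{\supp(\mu)}\in\BV_{loc}(\R^N)$, i.e. $\supp(\mu)$ has locally finite perimeter.

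The hard part is the $\BV$ bound on $\Delta\psi$, i.e. the locally finite perimeter of the non-coincidence set of \eqref{eq:obstacle2l}. This is standard when the right-hand side is bounded below by a positive constant, but here $F=\Delta W_a*\rho$ is only locally Lipschitz and may change sign, so one must localize: near a free-boundary point where $F>0$ the classical estimate applies directly; where $F<0$ the coincidence set is Lebesgue-null because $\rho=F\,\mathbf{1}_{\{\psi=C_0\}}\ge 0$, so there $\Delta\psi=F$ and nothing is lost; and the borderline locus $\{F=0\}$ has to be absorbed using the Lipschitz decay of $F$ near its zero set. One should also check that the $x_i$-dependence of the level $C_0=\psi(x_i)$ is harmless, which it is, since $\Delta\psi$ and the relevant coincidence sets are insensitive to additive constants and the two local descriptions of $\psi$ agree on the overlaps of the covering balls.
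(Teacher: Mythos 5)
Your reduction $\Delta\psi = F\,\mathbf{1}_{\{\psi>C_0\}}$ a.e.\ in $B_\eps(x_i)$ and the observation that $F=\Delta W_a*\rho$ is locally Lipschitz are correct, but the core of your argument --- that $\mathbf{1}_{\{\psi>C_0\}}\in\BV_{loc}(B_\eps(x_i))$ --- is precisely what is not available without the non-degeneracy hypothesis. The classical estimates of \cite{C,C1} on the $(N-1)$-Hausdorff measure of the free boundary require $F\geq c_0>0$ near the free boundary, whereas the first assertion of the theorem makes no sign assumption on $F$ (positivity only enters the second assertion). Your last paragraph acknowledges this, but the proposed fix does not close the gap: on $\{F<0\}$ you are indeed safe (the coincidence set is Lebesgue-null there), but near $\{F=0\}$ you would need a quantitative weighted bound such as $\int_{\partial^*\{\psi>C_0\}}F\,d\mathcal H^{N-1}<\infty$, i.e.\ that the perimeter of $\{\psi>C_0\}$ inside $\{F>t\}$ grows more slowly than $1/t$ as $t\to0$; the constants in the cited free boundary estimates degenerate as the lower bound on $F$ tends to zero, and no such bound follows from the results you invoke. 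As written, the main statement $\rho\in\BV_{loc}(\R^N)$ is therefore not proved, and the second statement is also incomplete since it takes $\rho\in\BV_{loc}$ as input.

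The paper proves the theorem in the opposite logical order and avoids free boundary regularity altogether: it approximates the obstacle problem by the penalized equations $-\Delta\psi_\delta+\beta_\delta(\psi_\delta-C_0)=-F$ (Brezis--Kinderlehrer), differentiates in an arbitrary direction $\xi$, multiplies by $\chi\,\sgn(\pa_\xi\psi_\delta)$ and integrates by parts to obtain a bound $\int_K|\pa_\xi\Delta\psi_\delta|\,dx\leq C$ uniform in $\delta$, using only $\Delta W_a\in W^{1,1}_{loc}$ (hence $F\in W^{1,1}_{loc}$) and no sign condition; lower semicontinuity of the total variation then gives $\Delta\psi\in\BV_{loc}$ and thus $\rho\in\BV_{loc}$. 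Only afterwards, under the non-degeneracy assumption, is the locally finite perimeter of $\supp(\mu)$ deduced by writing the indicator of the coincidence set as $\rho/F$ --- essentially your final step, which is fine once $\rho\in\BV_{loc}$ is known. In short, your argument routes the $\BV$ bound through a perimeter estimate for the non-coincidence set, thereby smuggling the non-degeneracy hypothesis into the part of the theorem that does not assume it; you should instead establish the $\BV$ bound directly, e.g.\ via the penalization argument.
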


Note that the condition that $\Delta W_a * \rho >0$ in a neighborhood of $\pa
(\supp(\mu))$ is in particular satisfied if $\Delta W_a(x) $ is non-negative for
all $x$ and not identically zero  (which is the case when $W_a(x)={|x|^q}/{q} $
with $q>2-N$). This condition implies that $\rho$ has a nonzero continuous
extension on $\pa (\supp(\mu))$ from the interior of $\supp(\mu)$. In
particular, $\rho$ has a jump
discontinuity at the boundary of its support, and the $BV$ regularity is thus optimal in that sense.

Finally, let us point out that  there are numerous results in the literature
concerning further regularity of the free boundary $\pa (\supp(\mu))$  for the
obstacle problem, always under the same non-degeneracy requirement that $\Delta
W_a * \mu >0$ in a neighborhood of the free boundary, see \cite{C,Blank}.
Clearly many of  these results could be used here, but we will not pursue this direction, as we are mainly interested in the regularity of the measure $\mu$  itself.
 


\subsection{More singular than Newtonian case}\label{sec:more}
Many of the results stated in the previous subsections can be extended without
too much difficulty to the case in which the repulsion at zero in more singular
than Newtonian but still locally integrable, that is, when
$$ 
W(x)\sim \frac{1}{|x|^p} \quad\mbox{ as } |x|\to 0, \mbox{
with $p\in (N-2,N)$}
$$
The general approach is the same as that described in the previous section, but
the obstacle problem \eqref{eq:obstacle2l} will be replaced by a fractional
obstacle problem of order less than $2$. Because such fractional obstacle
problem enjoys very good properties, we will be able to derive regularity
results for $\eps$-minimizers.

To make this more precise, we first recall that $(-\Delta)^s$ denotes the
fractional Laplace operator of order $s\in(0,1)$, which can be defined as a
singular integral operator, or, using Fourier transform, as the operator with
symbol $|\xi|^{2s}$. For $s\in(0,1)$, it is then well known that  the function
$$ 
V_s(x) =\frac{c_{N,s}}{|x|^{N-2s}} 
$$
is the fundamental solution of the fractional Laplace equation (for an
appropriate choice of the constant $c_{N,s}$). More precisely, it satisfies
$(-\Delta)^{s} V_s=\delta_0$. In this section, we fix $s\in(0,1)$ and consider
an interaction potential that satisfies $W(x)\sim V_s(x)$ for $s\to 0$, that is,
we replace hypothesis {\rm (H2)} with:
\begin{enumerate}
\item[(H2s)] The function $W_a (x):= W(x)-V_s(x)$ satisfies:
\begin{equation}\label{eq:fracLph}
 (-\Delta)^s W_a\in L^p_{loc}(\R^N) \;\; \mbox{ for some }\;
p\in(\frac{N}{2s},\infty].
\end{equation}
\begin{equation}\label{eq:fraclaplace}
 (-\Delta)^s W_a(x)\leq C_* \;\; \mbox{a.e.} \;\; x\in\R^N.
\end{equation}
\end{enumerate}
As before, we also define $W(0)=+\infty$.

Finally, because of the non-locality of the fractional laplacian, it is much more difficult to deal with non compactly supported minimizers in this framework. For the sake of simplicity, and because the minimizers of interest in most applications have this property,
we will thus only consider compactly supported local minimizers in this section (so {\rm (H3a)} holds).

\begin{remark}
We note that when considering power-laws interaction potentials
$W_a(x)=\frac{|x|^q}{q}$, we get
$$  
(-\Delta)^s W_a = |x|^{q-2s} \frac {C_{N,s}}{ q} \mbox{P.V.}\int_{\R^N}
\frac{1-|z|^q}{|1-z|^{N+2s}}\, dz\, ,
$$
where we used the singular integral formulation of the fractional Laplacian. In
particular, we need $q<2s$ in order for the integral to be convergent and $q>0$
in order for \eqref{eq:fracLph} to hold. However, the restriction $q<2s$ can be
eliminated by truncating the potential outside a large ball.
\end{remark}

As in the Newtonian case, the first step is to establish the continuity of the
potential $\psi$:

\begin{proposition}[Continuity of the potential]\label{prop:continuitys}
Assume that the interaction potential $W$ satisfies {\rm (H1)} and {\rm
(H2s)}, and let $\mu$ be an $\eps$-minimizer in the sense of Definition
{\rm \ref{def:min}} such that $\supp( \mu)$ is compact. Then the potential
$\psi(x):=W*\mu (x)$ is a continuous function in $\R^N$.
\end{proposition}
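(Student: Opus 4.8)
The plan is to mimic the strategy that works in the Newtonian case (Proposition~\ref{prop:continuity}), adapting it to the non-local operator $(-\Delta)^s$. Write $W = V_s + W_a$, so that $\psi = V_s * \mu + W_a * \mu$. Since $\supp(\mu)$ is compact and $W_a$ is (up to Sobolev embedding, by \eqref{eq:fracLph} with $p>N/(2s)$) H\"older continuous, the term $W_a * \mu$ is a continuous bounded function on $\R^N$. Hence it suffices to prove that the ``Riesz potential part'' $\psi_s := V_s * \mu = c_{N,s} |x|^{-(N-2s)} * \mu$ is continuous. Away from $\supp(\mu)$ this is immediate since $V_s$ is smooth there and $\mu$ has compact mass; the only issue is continuity at points of $\supp(\mu)$, and by lower semicontinuity of $\psi$ (noted in the introduction via Fatou) it is enough to rule out upward jumps, i.e.\ to show $\limsup_{x\to x_0}\psi_s(x) \le \psi_s(x_0)$ at every $x_0 \in \supp(\mu)$, or equivalently to establish local boundedness together with the absence of an essential jump.

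The key input is Proposition~\ref{prop:minsupp}: for every $x_0 \in \supp(\mu)$ one has $\psi(x_0) \le \psi(x)$ for a.e.\ $x \in B_\eps(x_0)$, with $\eps$ independent of $x_0$. The idea is to turn this one-sided a.e.\ bound into genuine continuity. First I would show $\psi$ is \emph{locally bounded}: if $\psi$ were not essentially bounded near some $x_0 \in \supp(\mu)$, then (since $\psi \ge \psi(x_0)$ a.e.\ near $x_0$ and $\psi$ is l.s.c.) one could extract a sequence of points where $\psi$ is large; but the Riesz kernel $V_s \in L^1_{loc}$ combined with $\mu(\R^N)=1$ forces $\int_{B_r(x_0)} \psi_s \,dx \le \|V_s\|_{L^1(B_{2r})} < \infty$, so on any ball the average of $\psi$ is finite, which together with the a.e.\ lower bound $\psi \ge \psi(x_0)$ on $B_\eps(x_0)$ gives an $L^1$, hence (after a covering argument over finitely many such balls centered in $\supp(\mu)$) a uniform bound on $\psi$ on compact sets — first a.e., then everywhere by l.s.c. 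For the continuity itself, I would use the obstacle-problem structure exactly as in Section~3: on $B_\eps(x_0)$ the function $\psi$ satisfies $(-\Delta)^s \psi = \mu - (-\Delta)^s W_a * \mu$ in the distributional sense, with $(-\Delta)^s W_a * \mu \in L^p_{loc}$, $p > N/(2s)$; on the open set $\{\psi > \psi(x_0)\}$ the measure $\mu$ vanishes, so there $(-\Delta)^s \psi = -(-\Delta)^s W_a*\mu \in L^p_{loc}$, and local regularity for the fractional Laplacian with $L^p$ right-hand side ($p>N/(2s)$) gives that $\psi$ is continuous (indeed H\"older) on that open set, while on $\{\psi = \psi(x_0)\}\cap B_\eps(x_0)$ it equals the constant $\psi(x_0)$; matching these on the two regions, using l.s.c.\ of $\psi$ to handle the interface, yields continuity across $B_\eps(x_0)$, and since $\eps$ is uniform and $\supp(\mu)$ can be covered by finitely many such balls, $\psi$ is continuous on all of $\R^N$.

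An alternative, possibly cleaner route: invoke the comparison between $\psi$ on $B_\eps(x_0)$ and the solution $\varphi$ of the fractional obstacle problem with obstacle $C_0 = \psi(x_0)$, right-hand side $F = (-\Delta)^s W_a * \mu \in L^p_{loc}$, and boundary datum $\psi$ on $\R^N \setminus B_\eps(x_0)$ (whose exterior values are controlled by the local boundedness established above and by smoothness of $V_s$ away from the origin). The known regularity theory for the fractional obstacle problem — e.g.\ \cite{S,CSS,CVobs,CV} — asserts that such $\varphi$ is continuous (in fact $C^{1,\alpha}$ or at least $C^\alpha$ depending on $p$), and then identifying $\psi = \varphi$ on $B_\eps(x_0)$ (as is done in the Newtonian case in Proposition~\ref{prop:obstacle}) transfers the continuity to $\psi$. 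The main obstacle I anticipate is the non-locality of $(-\Delta)^s$: unlike the Newtonian case, one cannot localize the equation for $\psi$ without paying attention to the contribution of $\mu$ from outside $B_\eps(x_0)$, and one must be careful that the a.e.\ statement in Proposition~\ref{prop:minsupp} (which only controls $\psi$ on $B_\eps(x_0)$, not its exterior tail) still suffices — here compactness of $\supp(\mu)$ is essential, as it makes the exterior contribution $V_s * \mu$ manifestly smooth and bounded away from $\supp(\mu)$, which is exactly why the statement is restricted to the compactly supported case.
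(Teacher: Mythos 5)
There is a genuine gap: the one step that actually carries the proof --- showing $\limsup_{x\to x_0}\psi(x)\le \psi(x_0)$ as $x$ approaches a point $x_0\in\supp(\mu)$ from outside the support --- is never established. Your first route gives interior H\"older regularity of $\psi$ on the open set $\{\psi>\psi(x_0)\}$ (or off the support), but interior estimates degenerate at the boundary of that set and say nothing about how $\psi$ behaves as one approaches the contact set; ``matching the two regions using l.s.c.'' is precisely the hard direction (l.s.c.\ only rules out downward jumps), and no mechanism is supplied for it. Your second route is circular: in the paper the identification of $\psi$ with the unique solution of the fractional obstacle problem (Proposition \ref{prop:obstaclefrac}) is \emph{derived from} the continuity statement you are trying to prove, and even setting up that obstacle problem requires the pointwise (not merely a.e.) inequality $\psi\ge\psi(x_0)$ on $B_\eps(x_0)$ and the constancy of $\psi$ on $\supp(\mu)\cap B_\eps(x_0)$, neither of which you establish. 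In addition, your local-boundedness argument is a non sequitur: finiteness of $\int_{B_r}\psi\,dx$ together with an a.e.\ lower bound gives an $L^1$ control, which does not yield a uniform (a.e.\ or pointwise) upper bound without some sub-mean-value structure.

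What the paper actually does, and what is missing from your sketch, is the quantitative mean-value machinery for $(-\Delta)^s$: a super-mean-value inequality $\psi(x)\ge \psi*\gamma_\lambda(x)-C\lambda^\alpha$ valid everywhere (Lemma \ref{lem:meanvalue2frac}), a sub-mean-value inequality $\psi(x)\le \psi*\gamma_\lambda(x)+C_*\lambda^{2s}$ off the support obtained via the barrier $v=C_*V_s*\chi_B$, and the global lower bound of Lemma \ref{lem:below}. These first upgrade the a.e.\ Euler--Lagrange condition of Proposition \ref{prop:minsupp} to the pointwise minimality and constancy on the support (Corollary \ref{cor:minsuppfrac}), and then feed an Evans/Silvestre-type argument in which the non-local tails (the difficulty you correctly anticipate) are handled by the comparison constant $C_0=\inf_x \gamma_1(x+e)/\gamma_1(x)>0$, comparing the mollifications centered at $x_k$ and at the nearest support point $y_k$, together with $\inf_{\R^N}\psi>-\infty$. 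Without these ingredients (or a substitute for them), your outline does not close.
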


As a consequence, we can show (proceeding as in previous subsections):

\begin{proposition}\label{prop:obstaclefrac}
Under the conditions of Proposition {\rm \ref{prop:continuitys}}, for all
$x_0\in\supp (\mu)$, the potential function $\psi$ is equal, in $B_\eps(x_0)$,
to the unique solution of the obstacle problem 
\begin{equation*}
\left\{
\begin{array}{rll}
\vphi &\geq C_0,\quad & \mbox{ in } B_\eps(x_0) \\
(-\Delta)^s \vphi &\geq  - F(x), \quad & \mbox{ in } B_\eps(x_0) \\
(-\Delta)^s \vphi &=  - F(x), & \mbox{ in } B_\eps(x_0)\cap\{\vphi >C_0\} \\
\vphi &= \psi, & \mbox{ in  } \R^N\setminus B_\eps(x_0),
\end{array}
\right.
\end{equation*} 
where $C_0=\psi(x_0)$ and $F(x)=-(-\Delta)^s W_a*\mu\in L^p_{loc}(\R^N)$.
Furthermore, the density $\mu$ is given by
\begin{equation*}
\mu = (-\Delta)^s \psi + F  .
\end{equation*}
\end{proposition}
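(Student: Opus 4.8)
The argument parallels the Newtonian case treated before Proposition~\ref{prop:obstacle}, with $(-\Delta)^s$ and $V_s$ replacing $-\Delta$ and $V$. First, Proposition~\ref{prop:continuitys} gives that $\psi=W*\mu$ is continuous. Fix $x_0\in\supp(\mu)$. By Proposition~\ref{prop:minsupp}, $\psi(x_0)\leq\psi(x)$ for a.e.\ $x\in B_\eps(x_0)$, and continuity upgrades this to $\psi(x_0)\leq\psi(x)$ for \emph{every} $x\in B_\eps(x_0)$. Since by Remark~\ref{rem:epsilon} the radius $\eps$ does not depend on the base point, for $x_1\in\supp(\mu)\cap B_\eps(x_0)$ we also have $x_0\in B_\eps(x_1)$, hence $\psi(x_1)\geq\psi(x_0)\geq\psi(x_1)$; therefore
\begin{equation*}
\psi\equiv C_0:=\psi(x_0)\ \text{ on } B_\eps(x_0)\cap\supp(\mu),\qquad \psi(x)\geq C_0\ \text{ for all } x\in B_\eps(x_0).
\end{equation*}
This is the first line of the obstacle problem, and the last line $\vphi=\psi$ in $\R^N\setminus B_\eps(x_0)$ holds by construction.

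Next I apply $(-\Delta)^s$ globally. Since $\supp(\mu)$ is compact, by the truncation discussed after {\rm (H3b)} we may assume $(-\Delta)^s W_a\in L^p(\R^N)$ (shrinking $\eps$ if necessary so that the truncation does not affect $\psi$ on $B_\eps(x_0)$); then $W_a$, and hence $\psi$, grows at most linearly, so $(-\Delta)^s\psi$ is well defined as a distribution. Using $(-\Delta)^s V_s=\delta_0$, the commutation $(-\Delta)^s(W_a*\mu)=((-\Delta)^s W_a)*\mu$, and Minkowski's integral inequality (which shows $F:=-((-\Delta)^s W_a)*\mu\in L^p_{loc}(\R^N)$ since $\mu$ is a compactly supported probability measure), we obtain
\begin{equation*}
(-\Delta)^s\psi=(-\Delta)^s(V_s*\mu)+(-\Delta)^s(W_a*\mu)=\mu-F\qquad\text{in }\mathcal D'(\R^N).
\end{equation*}
This yields at once the density formula $\mu=(-\Delta)^s\psi+F$, and, since $\mu\geq0$, the inequality $(-\Delta)^s\psi\geq -F$ in $\mathcal D'(\R^N)$, a fortiori in $B_\eps(x_0)$ — the second line of the obstacle problem.

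It remains to establish the complementarity condition. The set $\{\psi>C_0\}\cap B_\eps(x_0)$ is open, and by the first step it is disjoint from $\supp(\mu)$; hence $\mu$ restricted to it is the zero measure. Testing the identity $(-\Delta)^s\psi=\mu-F$ against $\phi\in C_c^\infty$ supported in this set gives $\langle(-\Delta)^s\psi,\phi\rangle=-\langle F,\phi\rangle$, i.e.\ $(-\Delta)^s\psi=-F$ in $\mathcal D'(\{\psi>C_0\}\cap B_\eps(x_0))$ — the third line. Collecting the four conditions, $\psi$ is a solution of the stated fractional obstacle problem; uniqueness is the classical fact (via the variational characterization as the minimizer of the Gagliardo energy over the convex set cut out by the obstacle and the exterior datum, or via the comparison principle) that such a problem has a unique solution, see \cite{S,CSS,CVobs,CV}.

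The point requiring the most care, compared with the local Newtonian case, is the nonlocality of $(-\Delta)^s$: one must (a) give rigorous meaning to $(-\Delta)^s\psi$ knowing only that $\psi$ is continuous with controlled growth — this is why one truncates using {\rm (H3a)} and works in the natural weighted-$L^1$ class in which the fractional Laplacian acts on functions of moderate growth — and (b) check that the global distributional identity $(-\Delta)^s\psi=\mu-F$ does localize to give $(-\Delta)^s\psi=-F$ on the open non-contact region, even though the operator is nonlocal; the resolution is that the right-hand side $\mu-F$ is an honest (locally finite, locally $L^p$) object, so its restriction to the open set $\{\psi>C_0\}\cap B_\eps(x_0)$, where $\mu$ vanishes, makes the identity immediate. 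Once the global identity of the second step is in hand, both obstacles dissolve and the proof concludes exactly as in the Newtonian case.
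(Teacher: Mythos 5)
Your proposal is correct and follows essentially the same route as the paper, which proves Proposition~\ref{prop:obstaclefrac} simply by repeating the Newtonian derivation of Proposition~\ref{prop:obstacle}: constancy of $\psi$ on $\supp(\mu)\cap B_\eps(x_0)$ via Proposition~\ref{prop:minsupp}, Remark~\ref{rem:epsilon} and continuity, the distributional identity $(-\Delta)^s\psi=\mu-F$ from $(-\Delta)^sV_s=\delta_0$ and Minkowski's inequality, positivity of $\mu$ for the inequality, and vanishing of $\mu$ on the open non-contact set for the complementarity condition, with uniqueness quoted from the obstacle-problem literature. The only quibble is your claim that $\psi$ ``grows at most linearly'': after the truncation permitted by {\rm (H3a)} the function $W_a*\mu$ is in fact bounded with compact support and $V_s*\mu$ decays, so $\psi$ lies in the weighted $L^1$ class you invoke later (linear growth alone would not suffice for $s\le 1/2$), but this does not affect the argument.
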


This type of obstacle problem has been studied by numerous authors in recent
years, in particular by L. Silvestre \cite{S}.
However, some aspects of the theory for this fractional obstacle problem are
different, or not as developed yet, as that of the regular obstacle problem. 
The only regularity result we will prove is that the density $\mu$ is H\"{o}lder
continuous:

\begin{theorem}[Regularity of $\mu$]\label{thm:regularityfrac}
Assume that $W$ satisfies {\rm (H1)} and {\rm (H2s)}, and let $\mu$ be an
$\eps$-minimizer in the sense of Definition {\rm \ref{def:min}} such that
$\supp(
\mu)$ is compact. Assume moreover that
$$
(-\Delta)^s W_a\in W^{\eps_0,1}_{loc}(\R^N)\quad \mbox{ for some small
$\eps_0>0$}.
$$ 
Then the potential function $\psi$ is in $\mathcal C^{1,\gamma}(\R^N)$ for all
$\gamma<s$. Furthermore, the measure $\mu$ is absolutely continuous with respect
to the Lebesgue measure and there exists a function $\rho \in C^{\alpha}(\R^N)$
for all $\alpha<1-s$, such that $\mu = \rho(x) d\mathcal L^N$.
\end{theorem}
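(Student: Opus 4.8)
\medskip

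The plan is to invoke Proposition \ref{prop:obstaclefrac}, which identifies $\psi$ locally (in each ball $B_\eps(x_0)$ around a point $x_0\in\supp(\mu)$) with the unique solution of the fractional obstacle problem with obstacle $C_0=\psi(x_0)$ and right-hand side $-F$, where $F=-(-\Delta)^sW_a*\mu$. The first step is to upgrade the integrability of $F$ to a small amount of Sobolev regularity. Since $\mu$ is a (bounded, compactly supported) probability measure and $(-\Delta)^sW_a\in L^p_{loc}\cap W^{\eps_0,1}_{loc}(\R^N)$ with $p>\tfrac N{2s}$, Young's inequality for convolutions gives $F\in L^\infty_{loc}$ as soon as we know $\mu$ has bounded density; but a priori we only know $\mu\in\P(\R^N)$. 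I would therefore first use the convolution structure more crudely: convolving $W^{\eps_0,1}_{loc}$ against the finite measure $\mu$ keeps $F$ in $W^{\eps_0,1}_{loc}$, and convolving $L^p_{loc}$ against $\mu$ keeps $F\in L^p_{loc}$. Hence $F\in L^p_{loc}\cap W^{\eps_0,1}_{loc}$, which by interpolation/Sobolev embedding places $F$ in $C^{\beta}_{loc}$ for some small $\beta>0$ — or at the very least in $W^{\delta,q}_{loc}$ with $\delta q > N$, which is what the regularity theory for the fractional obstacle problem requires as input.

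\medskip

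The second step is the heart of the matter: apply the known optimal regularity theory for the fractional obstacle problem (Silvestre \cite{S}, Caffarelli--Salsa--Silvestre \cite{CSS}, and the related results of \cite{CVobs,CV}). The basic statement is that if the solution $\vphi$ of an obstacle problem for $(-\Delta)^s$ has a sufficiently regular right-hand side, then $\vphi\in C^{1,s}_{loc}$ away from the boundary of the ball; when the right-hand side is only Hölder (or in a slightly subcritical Sobolev space) one loses an epsilon and gets $\vphi\in C^{1,\gamma}_{loc}$ for every $\gamma<s$. Applying this with $\vphi=\psi$ and using that the balls $B_\eps(x_0)$ cover a neighborhood of $\supp(\mu)$ — with $\eps$ independent of $x_0$ by Remark \ref{rem:epsilon} — together with the fact that outside $\supp(\mu)$ one has $-\Delta^s\psi=F$ directly (no obstacle), I obtain $\psi\in C^{1,\gamma}_{loc}(\R^N)$ for all $\gamma<s$; since $W_a$ and hence $\psi$ also satisfy the requisite decay/boundedness for compactly supported $\mu$, this is in fact global, i.e. $\psi\in\mathcal C^{1,\gamma}(\R^N)$.

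\medskip

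The third step converts regularity of $\psi$ into regularity of $\mu$ via the identity $\mu=(-\Delta)^s\psi+F$ from Proposition \ref{prop:obstaclefrac}. Because $\psi\in C^{1,\gamma}$ with $\gamma<s$, the fractional Laplacian $(-\Delta)^s\psi$ — being an operator of order $2s$ acting on a $C^{1,\gamma}$ function — lands in $C^{1+\gamma-2s}_{loc}=C^{\alpha}_{loc}$ for every $\alpha<1-s$ (here one uses $\gamma<s$, so $1+\gamma-2s<1-s$, and the standard Hölder mapping properties of $(-\Delta)^s$ on $C^{1,\gamma}$). Since $F\in C^{\beta}_{loc}$ with, in general, $\beta$ at least as large as the relevant $1-s$ threshold need not hold — but here one checks that $F$ is at worst as irregular as $(-\Delta)^s\psi$, so the sum is controlled — we conclude $\mu=\rho\,d\mathcal L^N$ with $\rho\in C^{\alpha}_{loc}(\R^N)$ for all $\alpha<1-s$, and compact support gives the global statement $\rho\in C^\alpha(\R^N)$.

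\medskip

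The main obstacle I anticipate is the bootstrapping subtlety in Steps 1 and 3: one must make sure the regularity claimed for $F=-(-\Delta)^sW_a*\mu$ is genuinely available \emph{before} knowing $\mu$ is absolutely continuous, so the argument cannot be circular. The way around this is that the smoothing in $F$ comes entirely from $(-\Delta)^sW_a$ (convolution with the fixed kernel), not from $\mu$; convolving a $W^{\eps_0,1}_{loc}$ function with a finite measure preserves that Sobolev regularity, which is exactly why the hypothesis $(-\Delta)^sW_a\in W^{\eps_0,1}_{loc}$ was imposed. Once that is pinned down the obstacle-problem regularity theory is a black box, and the only remaining care is bookkeeping the Hölder exponents — in particular tracking that the loss is the single $\eps$ coming from the subcritical right-hand side, yielding $\gamma<s$ for $\psi$ and $\alpha<1-s$ for $\rho$ rather than the endpoint values.
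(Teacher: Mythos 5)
Your architecture (local obstacle problem plus the identity $\mu=(-\Delta)^s\psi+F$) is the right starting point, but two of your central claims do not hold as stated. First, the regularity you assign to $F=-(-\Delta)^sW_a*\mu$ in Step 1 is not available: convolving with the finite measure $\mu$ does give $F\in L^p_{loc}\cap W^{\eps_0,1}_{loc}$, but this intersection does \emph{not} embed into $C^\beta_{loc}$ for any $\beta>0$, nor into any $W^{\delta,q}$ with $\delta q>N$. For instance, the characteristic function of a smooth bounded set lies in $W^{\eps_0,1}\cap L^\infty$ for $\eps_0<1$ and is discontinuous, and interpolating $W^{\eps_0,1}$ against $L^p$ (even $L^\infty$) never raises the smoothness parameter above $\eps_0$, which is far below the threshold $N/q$. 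Second, even granting Hölder data of some small exponent $\beta$, the fractional obstacle theory does not deliver $\psi\in C^{1,\gamma}$ for all $\gamma<s$ in one shot: the available statement (Proposition \ref{prop:fracregularity}) gives $\psi\in C^\alpha$ only for $\alpha<\min(\beta,1+s)$, so the output is capped by the regularity of $W_a*\mu$. With small $\beta$ you get only small Hölder regularity for $\psi$, not $C^{1,\gamma}$ with $\gamma$ up to $s$; the ``lose an epsilon'' heuristic you invoke is precisely what fails here.

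The missing idea is the bootstrap, which is the actual role of the hypothesis $(-\Delta)^sW_a\in W^{\eps_0,1}_{loc}$. One starts from $W_a*\mu\in C^\alpha(\R^N)$ with $\alpha=2s-N/p$ (this comes from $(-\Delta)^s(W_a*\mu)\in L^p$, $p>N/2s$, and fractional elliptic regularity, not from interpolation on $F$), applies Proposition \ref{prop:fracregularity} to get $\psi\in C^\alpha$, and then uses $\mu=(-\Delta)^s\psi+F$ to rewrite $W_a*\mu=(-\Delta)^sW_a*(\psi+W_a*\mu)$. Lemma \ref{lem:bootstrap} (convolution of a compactly supported $W^{\eps_0,1}$ kernel with a bounded $C^\alpha$ function lies in $C^{\alpha+\eps_0}$) then gains $\eps_0$ of Hölder regularity per iteration; iterating until $W_a*\mu\in C^{1,s}$ finally yields $\psi\in C^{1,\gamma}$ for all $\gamma<s$, and only at that stage is $F=-(-\Delta)^s(W_a*\mu)$ in $C^\alpha$ for all $\alpha<1-s$ — which your Step 3 tacitly needs, since $\rho=(-\Delta)^s\psi+F$ is only as regular as the worse of the two terms; your remark that ``$F$ is at worst as irregular as $(-\Delta)^s\psi$'' is exactly the point that must be proved. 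Your exponent bookkeeping for $(-\Delta)^s\psi$ on $C^{1,\gamma}$ is fine, but without the bootstrap neither the $C^{1,\gamma}$ claim for $\psi$ nor the Hölder bound on $F$ is justified. A smaller omission: to quote the regularity theory one needs a single obstacle problem posed on all of $\R^N$; the paper glues the (finitely many) local constants $C_i$ on the components of a neighborhood of $\supp(\mu)$ into one smooth global obstacle $f$ before invoking Proposition \ref{prop:fracregularity}, rather than patching the local problems on the balls $B_\eps(x_0)$ as you suggest.
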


\begin{remark}
The optimal regularity for the potential function in the fractional obstacle
problem is $C^{1,s}(\R^N)$, see {\rm \cite{CSS}}, but it requires $W_a*\mu\in
C^{2,1}(\R^N)$.
\end{remark}

Note that it has been conjectured that the free boundary has locally finite $N-1$
Hausdorff measure for the fractional obstacle problem, but to our knowledge
there is no proof of it yet.

\subsection{A uniqueness result}
We end this section with a uniqueness result for $d_2$-local
minimizers for the
very particular case 
of quadratic confinement where $W_a(x)=K |x|^2$.
This result gives an
alternative proof of the classical results by potential theory mentioned in the
introduction \cite{Fr,L-G,CGZ}, see also \cite{CFT}. 

Let us remark that the results in \cite{CCP} show the existence of global minimizers for
$W(x)=K|x|^2+V_s(x)$, see \cite[Section 3]{CCP}. Moreover, all global minimizers must be compactly supported and an attentive reading of Lemmas 2.6 and 2.7 in \cite{CCP} shows that any $d_2$-local minimizer is compactly supported in this particular case, since $W(x)\to\infty$ as $|x|\to \infty$.
Our result is:

\begin{theorem}[Uniqueness of $d_2$ minimizer]\label{them:fracunique}
Assume that $W_a(x)=K|x|^2$, where $K$ is a constant, and that {\rm(H1)} and either {\rm (H2)}  or {\rm(H2s)} hold. Then there exists a unique (up to translation) $d_2$-local minimizer $\mu_0\in \P_2(\R^N)$, which is
also the unique global minimizer of $E$ in $\P_2(\R^N)$. Furthermore, 
$\mu_0$ is compactly supported and radial symmetric. 
\end{theorem}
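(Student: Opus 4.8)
The plan is to exploit the obstacle-problem characterization of a $d_2$-local minimizer together with the special structure $W_a(x)=K|x|^2$, for which $\Delta W_a$ (resp.\ $(-\Delta)^s W_a$) is a positive constant, so that the quantity $F=\Delta W_a*\mu$ (resp.\ $F=-(-\Delta)^sW_a*\mu$) appearing in Proposition \ref{prop:obstacle} (resp.\ Proposition \ref{prop:obstaclefrac}) is simply a constant independent of $\mu$. First I would record, using the remarks in Section~3.4 and the results of \cite{CCP}, that any $d_2$-local minimizer $\mu$ is compactly supported, so that {\rm (H3a)} holds and Propositions \ref{prop:continuity}/\ref{prop:continuitys} and \ref{prop:obstacle}/\ref{prop:obstaclefrac} all apply; moreover by \cite[Theorem 4]{BCLR2} the Euler--Lagrange constant $C_0=2E[\mu]$ is the same at every point $x_0\in\supp(\mu)$. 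Then $\psi=W*\mu$ is, on all of $\R^N$, the solution of a single global obstacle problem with constant obstacle $C_0$ and constant right-hand side, and the measure is recovered by $\mu=-\Delta\psi+F$ (or $\mu=(-\Delta)^s\psi+F$). The idea is that this global obstacle problem with a constant source and constant obstacle, together with the constraint that $\mu$ be a probability measure (which fixes the total mass and hence, via the quadratic growth of $W$ at infinity, pins down $C_0$ and the ``size'' of the coincidence set), has a unique solution up to translation.

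The key steps, in order, would be: (1) reduce to compact support and constant $C_0$ as above; (2) observe that on $\{\psi>C_0\}$ we have $-\Delta\psi=-F$ with $F$ a positive constant, while on the coincidence set $\{\psi=C_0\}$ we have $\psi\equiv C_0$, so that $\psi-C_0$ solves the classical obstacle problem for $(-\Delta)$ (resp.\ $(-\Delta)^s$) with source $\equiv F>0$ on all of $\R^N$ in the sense of the global problem $\min(\,-\Delta(\psi-C_0)+F,\ \psi-C_0\,)=0$; (3) identify $\Omega:=\{\psi=C_0\}^\circ=\mathrm{int}\,\supp(\mu)$ and note that on $\Omega$, $\mu=F\,d\mathcal L^N$ is the uniform (constant) density, hence $\mu=\frac{1}{|\Omega|}\mathbf 1_\Omega\,d\mathcal L^N$ with $F=1/|\Omega|$, which already fixes the Lebesgue measure $|\Omega|$ of the coincidence set; (4) invoke the known uniqueness/characterization for this obstacle problem — in the Newtonian case this is precisely the quadrature-domain/Frostman result that the coincidence set of the obstacle problem with constant data is a ball (\cite{CF}, \cite{Fr}, \cite{L-G}), and in the fractional case it is the result of Caffarelli--V\'azquez \cite{CVobs} — to conclude that $\Omega$ is a Euclidean ball, whose radius is then determined by $|\Omega|=1/F$ and whose center is free (giving the translation invariance); (5) conclude that $\mu$ is the uniform measure on that ball, hence unique up to translation and radially symmetric, and finally note that since this argument applies in particular to any global minimizer (global minimizers are $d_2$-local minimizers), the unique $d_2$-local minimizer is the unique global minimizer.

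An alternative, perhaps cleaner, route for step (4) is to avoid citing the obstacle-problem uniqueness directly and instead use strict convexity of the energy: when $W_a(x)=K|x|^2$ the interaction part $\frac12\iint V(x-y)\,d\mu\,d\mu$ (resp.\ with $V_s$) is strictly convex on the set of probability measures with finite second moment — because $V$ (resp.\ $V_s$) has positive Fourier transform $|\xi|^{-2}$ (resp.\ $|\xi|^{-2s}$) — while the confinement term $K\int|x|^2\,d\mu$ is linear in $\mu$; strict convexity then forces uniqueness of the global minimizer up to the translations that leave $\int|x-\bar x|^2d\mu$, and hence the energy, invariant, and one separately argues (again via the Euler--Lagrange conditions and Proposition \ref{prop:minsupp}) that every $d_2$-local minimizer is in fact global. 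Radial symmetry then follows by uniqueness and rotational invariance of $E$. I expect the main obstacle to be step (1)/the passage from ``local $d_2$-minimizer'' to ``global minimizer'': strict convexity gives uniqueness of the \emph{global} minimizer immediately, but ruling out nontrivial local (non-global) minimizers requires care — one must use that the Euler--Lagrange inequalities (iii), $\psi\ge 2E[\mu]$ a.e.\ in $\R^N$, combined with strict convexity, force a local minimizer to actually be the global one (a convexity-along-transport-interpolations argument, or the observation that (i)--(iii) are exactly the optimality conditions of the convex problem). The fractional case $s\in(0,1)$ requires only that the same Fourier-positivity and Euler--Lagrange structure are available, which they are under {\rm (H2s)} and the compact-support reduction, so the two cases can be treated uniformly.
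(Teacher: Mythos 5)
Your argument works in the Newtonian case, where it essentially reproduces the paper's own observation (after the statement of the theorem and in Section \ref{sec:unique}): there $\Delta W_a=2NK$ is a genuine constant, $\mu=-\Delta\psi+F$ gives $\mu=2NK$ on the contact set, and $\mu$ is a multiple of the characteristic function of its support, which one then identifies as a ball. But the fractional case, which is the case the paper actually has to work for, does not go through as you describe, for two concrete reasons. First, $(-\Delta)^s(K|x|^2)$ is \emph{not} a positive constant (indeed it is not even defined without truncating the potential, and after truncation it is a genuinely $x$-dependent function), so $F=-(-\Delta)^sW_a*\mu$ is not a constant independent of $\mu$. Second, even on the interior of the coincidence set where $\psi$ is constant, the nonlocality of $(-\Delta)^s$ means $(-\Delta)^s\psi\neq 0$ there, so you cannot conclude $\mu=F\,d\mathcal L^N$ on $\Omega$; and in fact the minimizing density for $s\in(0,1)$ is not uniform on its support. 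Consequently your steps (2)--(3), and the reduction you feed into \cite{CVobs} in step (4) (constant obstacle, constant source, uniform density, $F=1/|\Omega|$), are incorrect for $s\in(0,1)$. The paper avoids this by writing the obstacle problem not for $\psi=W*\mu_0$ but for $h=V_s*\mu_0$ alone, with the \emph{quadratic} obstacle $C-K|x|^2-C_{\rho_0}$ (after centering the mass) and with $h\to0$ at infinity, and then invokes the uniqueness, radial symmetry and scaling property of the solution of exactly that problem from Theorem 3.1 of \cite{CVobs}; the free constant $C$ is then pinned down by the unit-mass condition through the scaling relation $\rho_C(x)=C^{1-s}\rho_1(x/C^{1/2})$. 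Also note the paper does not claim the contact set is a ball in the fractional case (it only obtains radial symmetry), precisely because of the nonlocality you are implicitly ignoring.

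Your alternative convexity route is a genuinely different strategy and could be made to work, but as written it has a gap: with $W_a(x)=K|x|^2$ the quadratic part enters as an \emph{interaction} term $\frac K2\iint|x-y|^2d\mu\,d\mu=K\int|x|^2d\mu-K\bigl|\int x\,d\mu\bigr|^2$, which is not linear in $\mu$; the term $-K|\int x\,d\mu|^2$ is concave, so $E$ is not convex on all of $\P_2(\R^N)$ and you must first fix the center of mass before the Fourier-positivity of $V$ (or $V_s$) gives strict convexity and before the Euler--Lagrange inequalities (i)--(iii) can be upgraded to global minimality. If you carry that reduction out carefully (and handle the ``a.e.''\ in condition (iii) via the continuity of $\psi$), you would get uniqueness and radial symmetry of the global minimizer and the promotion of $d_2$-local to global minimizers by a route independent of the obstacle-problem uniqueness of \cite{CVobs}; but in its current form neither of your two routes correctly covers the case {\rm(H2s)}.
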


As discussed in Section 2,
for $d_2$-local minimizers, we have 
$$ 
\psi(x)=2E[\mu] \quad \mbox{ for all $x\in \supp(\mu)$}.
$$  
In other words, the constant $C_0$  appearing in Proposition \ref{prop:obstacle}
does not depend on the choice of $x_0$. 
We will thus prove Theorem \ref{them:fracunique} by using a uniqueness result for the obstacle problem, in the particular case treated in \cite{CV}.

In the case of Newtonian repulsion (condition {\rm(H2)}), we get a slightly better result. Indeed, 
for this choice of $W_a$,
we have $\Delta W_a =2NK$, so by
equation \eqref{eq:mupsil}, we have
\begin{equation}\label{eq:jnjn}
\mu(x)=F(x)=2NK \quad \mbox{ for all $x\in \supp(\mu)$}.
\end{equation}
In other words, the regularity that we have proven in the previous sections allows us to say that $\mu$
has to be a multiple of the characteristic function of its support, and it is not difficult to show that this support is a ball.

We can also use this information a priori and obtain a different proof of Theorem \ref{them:fracunique} for global minimizers in  the Newtonian case using a simple scaling argument and a trivial rearrangement.
This proof uses the obstacle problem to allow us to assume that $\mu$ is a function satisfying \eqref{eq:jnjn}.  We provide this proof for the interested reader at the end of Section \ref{sec:uniquenessproof}.


\section{Continuity of the potential function $\psi$}\label{sec:proofcont}
\setcounter{equation}{0}

This section contains the proof of Proposition \ref{prop:continuity}. First, we
recall that for a given $\mu\in \P(\R^N)$ and due to {\rm (H1)}, the potential
function $\psi = W*\mu$, is defined pointwise in $\R^N$, with values in
$[0,\infty]$, by
$$
\psi (x): =W*\mu= \int_{\R^N}  W(x-y)d\mu(y).
$$
Furthermore, $\psi$ is lower semicontinuous. We also recall that whether
{\rm (H3a)} or {\rm (H3b)} holds, we can always assume \eqref{eq:M}, i.e., that
there exists a constant $M$ such that  
\begin{equation*}
||\Delta W_a||_{L^p(B_1(x))}\leq M\quad  \mbox{  for all $x\in \R^N$.} 
\end{equation*}

The proof of Proposition \ref{prop:continuity} will rely on the following
classical mean value formula, whose proof is recalled in Appendix A for the  reader's
sake.

\begin{lemma}\label{lem:meanvalue}
Let $u$ be such that $\Delta u\in L^p_{loc}(\R^N)$ for some  $p\in
(\frac{N}{2},\infty]$.
Then for all $x$,  there exists a constant $C$ depending only on $N$ and $p$,
such that
\begin{equation*}
 u(x)\ge\frac{1}{|B_r|}\int_{B_r(x)}u(y)dy-C||\Delta
u||_{L^p(B_{1}(x))} r^\alpha, \quad \mbox{ for all $r\in(0,1)$}\,,
\end{equation*}
with $\alpha=2-\frac{N}{p}$, $N\geq 3$. When $N=2$, $r^\alpha$ is replaced by
$|\log r| r^\alpha$. 
\end{lemma}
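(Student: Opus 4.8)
The plan is to write the difference between $u(x)$ and its average over $B_r(x)$ as an explicit potential-theoretic integral, using the Newtonian representation formula, and then to estimate that integral by Hölder's inequality. First I would recall the classical identity: for a ball $B_r = B_r(x)$ and a function $u$ with $\Delta u \in L^p_{loc}$, one has
\begin{equation*}
\frac{1}{|B_r|}\int_{B_r(x)} u(y)\,dy \;=\; u(x) \;+\; c_N\int_{B_r(x)} G_r(x,y)\,\Delta u(y)\,dy,
\end{equation*}
where $G_r$ is (a constant multiple of) the Green's function of the ball for the Laplacian with zero boundary data, up to the correct sign and normalizing constant; equivalently, subtracting the solid-mean value formula from the spherical one and integrating. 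Concretely, $\frac{1}{|B_r|}\int_{B_r}u - u(x)$ equals an integral against a nonnegative kernel $\Phi_r(x,y)$ supported on $B_r(x)$, obtained by averaging the fundamental solution $V(x-y)$ minus its boundary-harmonic correction. The only facts I need about this kernel are that $\Phi_r \geq 0$ (maximum principle / explicit positivity of the Green's function) and that its $L^{p'}$ norm over $B_r(x)$ scales like $r^{2 - N/p}$ when $N\geq 3$ (with a logarithmic correction when $N=2$).

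The core computation is the scaling of $\|\Phi_r\|_{L^{p'}(B_r(x))}$. By translation I may take $x=0$, and by the scaling $y = rz$ the kernel $\Phi_r(0,y)$ becomes $r^2 \Phi_1(0, z)$ (since the Green's function of $B_r$ relates to that of $B_1$ by $G_{B_r}(0,y) = r^{2-N}G_{B_1}(0,y/r)$ in dimension $N\geq 3$, and the extra $r^N$ from $dy = r^N dz$ in the averaging normalization $1/|B_r|$ combines to leave exactly $r^2$). Then
\begin{equation*}
\|\Phi_r\|_{L^{p'}(B_r)} = r^{2}\,\Big(\int_{B_1} |\Phi_1(0,z)|^{p'} r^N\,dz\Big)^{1/p'} = r^{2 + N/p'}\,\|\Phi_1\|_{L^{p'}(B_1)} = r^{2 - N/p}\,\|\Phi_1\|_{L^{p'}(B_1)},
\end{equation*}
using $N/p' = N - N/p$. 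Finiteness of $\|\Phi_1\|_{L^{p'}(B_1)}$ is the point where the hypothesis $p > N/2$ (equivalently $p' < N/(N-2)$) enters: the Green's function of the unit ball behaves like $|z|^{2-N}$ near the origin, which is in $L^{p'}$ precisely when $p'(N-2) < N$. In dimension $N=2$ the fundamental solution is logarithmic, so $\Phi_1(0,z) \sim |\log|z||$ near $0$; this lies in every $L^{p'}$, but the scaling identity $\Phi_r(0,y) = r^2\big(\Phi_1(0,y/r) + c\log r\big)$ picks up an additive $\log r$ term, which after taking the $L^{p'}$ norm produces the claimed factor $|\log r|\,r^{\alpha}$ with $\alpha = 2 - N/p$.

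Finally, combining the representation formula with Hölder's inequality and the positivity $\Phi_r \geq 0$,
\begin{equation*}
u(x) - \frac{1}{|B_r|}\int_{B_r(x)} u\,dy = -c_N\int_{B_r(x)}\Phi_r(x,y)\,\Delta u(y)\,dy \;\geq\; -c_N\,\|\Phi_r\|_{L^{p'}(B_r(x))}\,\|\Delta u\|_{L^{p}(B_r(x))},
\end{equation*}
and since $B_r(x)\subset B_1(x)$ for $r\in(0,1)$ we may enlarge the domain of the last norm to $B_1(x)$, yielding the stated inequality with $C = c_N\|\Phi_1\|_{L^{p'}(B_1)}$ depending only on $N$ and $p$. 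The main obstacle — really the only non-bookkeeping point — is getting the Green's-function scaling exponent and its integrability threshold exactly right, i.e.\ verifying that $\Phi_1 \in L^{p'}(B_1)$ under $p > N/2$ and that the scaling produces $r^{2-N/p}$; everything else is the classical mean value formula plus Hölder. (One could alternatively avoid the Green's function entirely and prove the formula by the standard trick of writing $\frac{d}{d\rho}\fint_{\partial B_\rho} u = \frac{\rho}{N|B_1|}\fint_{B_\rho}\Delta u \cdot (\text{const})$ and integrating twice in $\rho$; this reproduces the same kernel and the same estimate, and may be the cleaner route for the appendix.)
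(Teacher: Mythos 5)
Your route is essentially the paper's own (Appendix A): there the kernel is written as $\Gamma_{\lambda_1}-\Gamma_{\lambda_2}$ with $\Gamma_\lambda$ the truncated fundamental solution (so that, after letting $\lambda_1\to0$, one pairs $\Delta u$ against $V-\Gamma_r$, which is exactly your nonnegative kernel $\Phi_r$), and the estimate is the same H\"older bound with the kernel controlled by $\|V\|_{L^{p'}(B_r)}\sim r^{2-N/p}$; so in substance you have reproduced the paper's argument. Two local corrections, though. First, the pointwise scaling of the kernel is $\Phi_r(y)=r^{2-N}\Phi_1(y/r)$ (not $r^{2}\Phi_1(y/r)$): testing the identity against $v(z)=u(rz)$, whose Laplacian is $r^2(\Delta u)(rz)$, and changing variables gives $\|\Phi_r\|_{L^{p'}(B_r)}=r^{2-N+N/p'}\|\Phi_1\|_{L^{p'}(B_1)}=r^{2-N/p}\|\Phi_1\|_{L^{p'}(B_1)}$; your displayed chain contains the false equality $r^{2+N/p'}=r^{2-N/p}$ (you counted the Jacobian $r^N$ twice), even though the exponent you end up quoting is the correct one. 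Second, in dimension $N=2$ the kernel built from $\log(\rho/|y|)$ is scale invariant, $\Phi_r(y)=\Phi_1(y/r)$, so no additive $\log r$ appears and your method actually yields the cleaner bound $Cr^{\alpha}$; the factor $|\log r|\,r^\alpha$ in the lemma arises in the paper only because the kernel is bounded crudely by $V$ itself, whose $L^{p'}(B_r)$ norm carries the logarithm. Neither point affects the validity of the stated (weaker) inequality, and your integrability threshold $p'(N-2)<N\iff p>N/2$ is exactly right.
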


As a simple application of this result, we can show

\begin{lemma}\label{lem:meanvalue2}
Let $\mu \in \P(\R^N)$ and assume {\rm (H1)}, {\rm (H2)}, and {\rm (H3)}. Then
there exists a constant $C$ depending on $N$, $p$, and the constant $M$
appearing in \eqref{eq:M}, such that the potential function $\psi=W*\mu$
satisfies
\begin{equation}\label{eq:supmeanvalue}
 \psi(x)\geq \frac{1}{|B_r|}\int_{B_r(x)} \psi(y)\, dy -Cr^\alpha
 \end{equation}
for all $x\in \R^N$ and all $r \in(0,1)$. When $N=2$, $r^\alpha$ is replaced by
$|\log r| r^\alpha$.
\end{lemma}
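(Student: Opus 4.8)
The plan is to use the decomposition $W=V+W_a$ from {\rm (H2)}, which yields the pointwise splitting $\psi = V*\mu + W_a*\mu$, and to estimate the two pieces separately: the Newtonian part $V*\mu$ is superharmonic and hence obeys the mean value inequality \emph{with no error term}, while the remainder $W_a*\mu$ has its Laplacian in $L^p_{loc}(\R^N)$ and is handled directly by Lemma \ref{lem:meanvalue}. The one genuinely delicate point is the legitimacy of this pointwise splitting when $\supp(\mu)$ is not compact; once that is settled, the rest is short.

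For the Newtonian part, since $-\Delta V=\delta_0$ we have $-\Delta(V*\mu)=\mu\ge 0$ in $\mathcal D'(\R^N)$, so $V*\mu$ is superharmonic and not identically $+\infty$. Classical potential theory then gives that $r\mapsto \frac{1}{|B_r|}\int_{B_r(x)}(V*\mu)(y)\,dy$ is nonincreasing in $r$ with limit $(V*\mu)(x)$ as $r\to 0^+$ (it may equal $+\infty$ on a polar set, which is harmless), hence
\begin{equation*}
(V*\mu)(x)\ \ge\ \frac{1}{|B_r|}\int_{B_r(x)}(V*\mu)(y)\,dy \qquad\text{for all } r>0 .
\end{equation*}
No sign hypothesis on $V$ is used here, only $\mu\ge0$; this matters in dimension $N=2$, where $V$ is not nonnegative and where \eqref{eq:logmu} is what makes $V*\mu$ well defined.

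For the remainder, $\Delta(W_a*\mu)=(\Delta W_a)*\mu$ in $\mathcal D'(\R^N)$, and Minkowski's integral inequality together with \eqref{eq:M} gives, for $r\in(0,1)$,
\begin{equation*}
\|\Delta(W_a*\mu)\|_{L^p(B_1(x))}\ \le\ \int_{\R^N}\|\Delta W_a\|_{L^p(B_1(x-y))}\,d\mu(y)\ \le\ M ,
\end{equation*}
since $\mu$ is a probability measure. Thus $\Delta(W_a*\mu)\in L^p_{loc}(\R^N)$ with $p>N/2$, and Lemma \ref{lem:meanvalue} applied to $u=W_a*\mu$ yields
\begin{equation*}
(W_a*\mu)(x)\ \ge\ \frac{1}{|B_r|}\int_{B_r(x)}(W_a*\mu)(y)\,dy\ -\ CMr^\alpha ,
\end{equation*}
with $\alpha=2-N/p$ (and $r^\alpha$ replaced by $|\log r|r^\alpha$ when $N=2$). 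Adding the last two displays and using additivity of the average over $B_r(x)$ together with $\psi=V*\mu+W_a*\mu$ gives \eqref{eq:supmeanvalue}, with a constant depending only on $N$, $p$, and $M$.

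It remains to address the main obstacle, namely the pointwise splitting in the non-compact case. When $\supp(\mu)$ is compact (case {\rm (H3a)}) it is immediate, since $V\in L^1_{loc}(\R^N)$ and $W_a$ is continuous. Under {\rm (H3b)} one uses the uniform bound \eqref{eq:M} to control the growth of $W_a$ at infinity, together with \eqref{eq:logmu} when $N=2$, so that $W_a*\mu$ is finite pointwise and then $V*\mu=\psi-W_a*\mu$ makes sense as well (the inequality is trivial at points where $\psi(x)=+\infty$, so only finite points matter). An alternative that avoids committing to this bookkeeping is to introduce, on $B_r(x)$, the function $w$ with $-\Delta w=(\Delta W_a)*\mu=:F$ and $w=0$ on $\partial B_r(x)$: then $-\Delta(\psi+w)=\mu\ge0$, so $\psi+w$ is superharmonic in $B_r(x)$, the mean value inequality applies to $\psi+w$, and $|w(x)|$ and $\frac{1}{|B_r|}\int_{B_r(x)}|w|$ are both bounded by $C\|F\|_{L^p(B_r(x))}r^\alpha\le CMr^\alpha$ by standard potential estimates (this is essentially the computation behind Lemma \ref{lem:meanvalue}).
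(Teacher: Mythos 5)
Your overall strategy is the same as the paper's (split off the superharmonic Newtonian part and treat the remainder with Lemma \ref{lem:meanvalue}, then combine via Fubini--Tonelli), but the place where you apply Lemma \ref{lem:meanvalue} creates a genuine gap in the non-compact case. You apply it to the \emph{convolution} $u=W_a*\mu$, which presupposes that $W_a*\mu$ is a locally integrable (indeed continuous) function with $\Delta(W_a*\mu)=(\Delta W_a)*\mu$ in $\mathcal D'$; none of this is available a priori under {\rm (H3b)}. Your claim that \eqref{eq:M} (plus \eqref{eq:logmu} when $N=2$) makes $W_a*\mu$ finite pointwise is false in general: $W_a(x)=|x|^2$ satisfies {\rm (H3b)} with $p=\infty$, yet $W_a*\mu\equiv+\infty$ whenever $\mu$ has infinite second moment, and no moment bound on $\mu$ beyond \eqref{eq:logmu} is assumed. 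Retreating to ``only points with $\psi(x)<\infty$ matter'' does not repair this, because pointwise finiteness of $W_a*\mu$ at the single point $x$ is not enough to run Lemma \ref{lem:meanvalue} there: one needs $W_a*\mu\in L^1_{loc}$ near $x$, and since $(W_a)_+*\mu\le \psi+V_-*\mu+C$, that local integrability is essentially equivalent to local integrability of $\psi$ near $x$ --- which is precisely (the finiteness part of) the inequality you are trying to prove. The same circularity affects your alternative argument: writing $-\Delta(\psi+w)=\mu$ in $\mathcal D'(B_r(x))$ and invoking the mean value inequality at the center again requires $\psi\in L^1_{loc}$ near $x$ and an identification of $\psi$ with the precise superharmonic representative, neither of which is known at this stage.

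The fix is exactly the paper's proof, and you already have all the ingredients: apply the mean value inequalities at the level of the \emph{kernel}, not the potential. For each fixed $y$, superharmonicity of $V$ gives $V(x-y)\ge\frac{1}{|B_r|}\int_{B_r(0)}V(x-y+z)\,dz$, and Lemma \ref{lem:meanvalue} applied to the function $W_a$ (which is H\"older continuous with $\|\Delta W_a\|_{L^p(B_1(x-y))}\le M$ uniformly in $y$, by \eqref{eq:M}) gives the analogous inequality for $W_a$ with error $CMr^\alpha$ uniform in $y$. Adding these yields a pointwise inequality for the nonnegative kernel $W=V+W_a$, which you then integrate against $d\mu(y)$ and exchange with the ball average by Tonelli (legitimate since $W\ge0$ by {\rm (H1)}), obtaining \eqref{eq:supmeanvalue} directly. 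This sidesteps every finiteness and local-integrability question about $V*\mu$, $W_a*\mu$ and $\psi$, which is the whole point of the lemma being stated for arbitrary $\mu\in\P(\R^N)$.
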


\begin{proof}[Proof of Lemma \ref{lem:meanvalue2}] Observe that we can always
assume $\psi(x)<\infty$, since otherwise \eqref{eq:supmeanvalue} clearly holds.
Next, we recall that $W=V+W_a$ where the function $V$ is super-harmonic, and
thus satisfies
$$ 
V(x-y )\geq \frac{1}{|B_r|}\int_{B_r(0)} V(x-y+z)\, dz
$$
for all $x$ and $y\in \R^N$. Furthermore, the function $W_a$ satisfies the
condition of Lemma \ref{lem:meanvalue} due to {\rm (H2)}, and so
$$ 
W(x-y)\geq \frac{1}{|B_r|}\int_{B_r(0)} W(x-y+z)\, dz -Cr^\alpha
$$
for all $r\in(0,1)$, where the constant $C$ depends on $M$. We deduce
$$ 
\psi(x) = \int_{\R^N} W(x-y)\, d\mu(y) \geq \frac{1}{|B_r|}  \int _{\R^N}
\int_{B_r(0)} W(x-y+z)\, dz\, d\mu(y) - Cr^\alpha\,,
$$
and Fubini-Tonelli theorem implies
\begin{align*}
\int _{\R^N} \int_{B_r(0)} W(x-y+z)\, dz \, d\mu(y) & = \int_{B_r(0)} \int
_{\R^N} W(x-y+z)\, d\mu(y)\, dz\\
& = \int_{B_r(x)}\psi(z) \, dz\,.
\end{align*}
Notice that the integral in the left hand side is finite thanks to {\rm (H1)}.
The result follows and the $N=2$ case is totally analogous.
\end{proof}

With the previous Lemma, we can prove the following Corollary that gives
important information about the potential function in the support of the local
minimizer.

\begin{corollary}\label{cor:minsupp}
Let $\mu$ be an $\eps$-minimizer of the energy $E$ in the sense of Definition
{\rm \ref{def:min}} and assume that {\rm (H1)}, {\rm (H2)}, and {\rm (H3)} hold.
Then any point $x_0\in\supp(\mu)$ is a local minimizer of $\psi=W*\mu$ in the
sense that
\begin{equation*}
 \psi(x_0) \leq \psi(x) \mbox{ for all } x\in B_\eps(x_0).
 \end{equation*}
Furthermore, we have
\begin{equation}\label{eq:psimincst}
\psi(x) = \psi(x_0) \mbox{ for all } x\in \supp(\mu)\cap B_\eps(x_0).
\end{equation}
\end{corollary}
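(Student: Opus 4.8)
The plan is to deduce Corollary~\ref{cor:minsupp} from Proposition~\ref{prop:minsupp} together with the lower semicontinuity of $\psi$ and the mean value inequality of Lemma~\ref{lem:meanvalue2}. Proposition~\ref{prop:minsupp} already gives $\psi(x_0)\le\psi(x)$ for \emph{a.e.} $x\in B_\eps(x_0)$; the first task is to upgrade this ``a.e.'' to ``for all'' by a standard argument: for any $x\in B_\eps(x_0)$ and small $r>0$, the ball $B_r(x)$ is contained in $B_\eps(x_0)$ once $x$ is interior, and on that ball $\psi\ge\psi(x_0)$ a.e., so $\frac{1}{|B_r|}\int_{B_r(x)}\psi(y)\,dy\ge\psi(x_0)$. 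Combining with \eqref{eq:supmeanvalue}, $\psi(x)\ge\psi(x_0)-Cr^\alpha$ for all small $r$, and letting $r\to0$ yields $\psi(x)\ge\psi(x_0)$ for every $x$ in the open ball $B_\eps(x_0)$. (One should be slightly careful at the boundary, but \eqref{eq:psimincst} only concerns points of $\supp(\mu)$, which lie strictly inside $B_\eps(x_0')$ for $x_0'$ slightly perturbed, or one simply states the pointwise inequality on the open ball and notes the boundary case is not needed.)

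The second task is to prove the equality \eqref{eq:psimincst}. Fix $x_0\in\supp(\mu)$ and let $x_1\in\supp(\mu)\cap B_\eps(x_0)$. Applying the pointwise inequality just established at the base point $x_0$ gives $\psi(x_1)\ge\psi(x_0)$. Now I would use the crucial fact, highlighted in Remark~\ref{rem:epsilon}, that the radius $\eps$ in Proposition~\ref{prop:minsupp} is independent of the base point: applying the pointwise inequality instead at the base point $x_1$ (legitimate since $x_1\in\supp(\mu)$, and $x_0\in B_\eps(x_1)$ because $|x_0-x_1|<\eps$), we get $\psi(x_0)\ge\psi(x_1)$. The two inequalities force $\psi(x_0)=\psi(x_1)$, which is exactly \eqref{eq:psimincst}.

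The main obstacle — really the only subtle point — is the symmetry step: it relies on $\eps$ being uniform in $x_0$, so that $x_0\in B_\eps(x_1)$ whenever $x_1\in B_\eps(x_0)$. This is precisely what Remark~\ref{rem:epsilon} records, and it is the reason the corollary is stated with the same $\eps$ as in Definition~\ref{def:min}. The upgrade from a.e.\ to everywhere is routine given Lemma~\ref{lem:meanvalue2}, and one should just be mindful that the mean value inequality is applied on balls $B_r(x)\subset B_\eps(x_0)$ so that the a.e.\ lower bound $\psi\ge\psi(x_0)$ is available on the full ball of integration; this forces $x$ to be in the open ball and $r<\eps-|x-x_0|$, which is harmless. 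Lower semicontinuity of $\psi$ can also be invoked as an alternative route to the everywhere inequality, but the mean value approach is cleaner here since it is already set up.
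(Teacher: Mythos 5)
Your proposal is correct and follows essentially the same route as the paper: the mean value inequality of Lemma \ref{lem:meanvalue2} upgrades the a.e.\ bound from Proposition \ref{prop:minsupp} to the pointwise inequality on $B_\eps(x_0)$, and the equality \eqref{eq:psimincst} then follows by swapping the roles of $x_0$ and $x_1$, using the uniformity of $\eps$ recorded in Remark \ref{rem:epsilon}. The boundary caveat you raise is not needed since $B_\eps(x_0)$ is open, but otherwise the argument matches the paper's proof.
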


\begin{proof}[Proof of Corollary \ref{cor:minsupp}]
Note that \eqref{eq:supmeanvalue} implies
\begin{equation}\label{eq:meanvalue} 
\psi(x) \ge \lim_{r\to 0 } \frac{1}{|B_r|}\int_{B_r(x)}\psi(y)\, dy \quad\mbox{
for all $x\in B_\eps(x_0)$.}
\end{equation}
But for any $x\in B_\epsilon(x_0)$, $B_r(x)\subset B_\epsilon(x_0)$ for small
enough $r>0$. Proposition~\ref{prop:minsupp} implies that
$$ 
\psi (y)\geq \psi(x_0)\quad\mbox{ a.e. } y\in B_r(x).
$$
So from \eqref{eq:meanvalue}, we conclude that
$$
\psi(x)\ge\psi(x_0) \quad \mbox{ for all $x\in B_\epsilon(x)$}.
$$
Because the $\eps$ does not depend on the choice of $x_0$ by Remark
\ref{rem:epsilon}, \eqref{eq:psimincst} easily follows.
\end{proof}

Using the previous Corollary we can prove (as a first step toward the continuity of
the potential function $\psi$) that $W_a*\mu$ is continuous.

\begin{lemma}\label{lem:W_a*mu}
 Let $\mu$ be an $\eps$-minimizer of the energy $E$ in the sense of Definition~{\rm \ref{def:min}} and assume that {\rm (H1)}, {\rm (H2)}, and either {\rm (H3a)} or {\rm (H3b)}
hold. Then, $\psi\in L^\infty_{loc}(\R^N)$ and $W_a*\mu\in C^\alpha_{loc} (\R^N)$, where
$\alpha=2-\frac{N}{p}$.
\end{lemma}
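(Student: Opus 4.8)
The plan is to prove the two assertions essentially independently. The H\"older bound on $W_a*\mu$ is pure interior elliptic regularity and uses neither the minimality of $\mu$ nor Corollary~\ref{cor:minsupp}, so I would dispatch it first: testing against $C_c^\infty$ functions gives $\Delta(W_a*\mu)=(\Delta W_a)*\mu$ in $\mathcal D'(\R^N)$, and Minkowski's integral inequality with \eqref{eq:M} yields
$$\big\|(\Delta W_a)*\mu\big\|_{L^p(B_1(x))}\le\int_{\R^N}\|\Delta W_a\|_{L^p(B_1(x-y))}\,d\mu(y)\le M\qquad\text{for all }x\in\R^N,$$
so $F:=(\Delta W_a)*\mu\in L^p_{loc}(\R^N)$ with $p>\tfrac N2$. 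Since $W_a$ is continuous, Fubini's theorem (using (H3a), or the growth and moment control of (H3b) when $\supp(\mu)$ is unbounded) shows $W_a*\mu\in L^1_{loc}(\R^N)$; interior Calder\'on--Zygmund estimates then give $W_a*\mu\in W^{2,p}_{loc}(\R^N)$, and the Morrey--Sobolev embedding ($p>\tfrac N2$) gives $W_a*\mu\in C^{\alpha}_{loc}(\R^N)$ with $\alpha=2-\tfrac Np$ (with room to spare when $p\ge N$).

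For $\psi\in L^\infty_{loc}$ I would start from $\int_{\R^N}\psi\,d\mu=2E[\mu]<\infty$, which forces $\psi<\infty$ $\mu$-a.e. If some $x_0\in\supp(\mu)$ had $\psi(x_0)=+\infty$, then \eqref{eq:psimincst} would force $\psi\equiv+\infty$ on $\supp(\mu)\cap B_\eps(x_0)$, a set of positive $\mu$-measure, which is impossible; hence $\psi$ is finite on $\supp(\mu)$. Since \eqref{eq:psimincst} makes $\psi|_{\supp(\mu)}$ locally constant at the fixed scale $\eps$, covering a compact $K$ by finitely many balls $B_{\eps/2}(x_i)$ with $x_i\in\supp(\mu)$ shows $\psi\le c_K$ on $K\cap\supp(\mu)$ for some finite $c_K$.

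It then remains to bound $\psi(x)$ for $x\in K\setminus\supp(\mu)$. If $d:=\mathrm{dist}(x,\supp(\mu))\ge\tfrac12$, then $\psi(x)=\int_{|x-y|\ge 1/2}W(x-y)\,d\mu(y)$ is controlled by the supremum of $W$ on a bounded annulus, which is finite. If $d<\tfrac12$, pick $x_0\in\supp(\mu)$ with $|x-x_0|=d$, so that $B_d(x)\subset B_{2d}(x_0)$ and $\mu(B_d(x))=0$; then \eqref{eq:supmeanvalue} at $x_0$ with radius $2d<1$ together with the previous step gives
$$\frac{1}{|B_d|}\int_{B_d(x)}\psi\,dy\le\frac{2^N}{|B_{2d}|}\int_{B_{2d}(x_0)}\psi\,dy\le 2^N\big(\psi(x_0)+C(2d)^\alpha\big)\le 2^N(c_K+C),$$
while, since $\mu=0$ in $B_d(x)$ so that $\Delta\psi=F$ there, the local version of the mean value formula of Lemma~\ref{lem:meanvalue} applied to $-\psi$ in $B_d(x)$ gives $\psi(x)\le\frac{1}{|B_d|}\int_{B_d(x)}\psi\,dy+C\|F\|_{L^p(B_d(x))}d^\alpha\le 2^N(c_K+C)+CM$; with $\psi\ge0$ this yields $\psi\in L^\infty(K)$. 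The main obstacle is exactly this last step: \eqref{eq:supmeanvalue} is only a one-sided superharmonic-type inequality, under which a function may equal $+\infty$ on a large set, so one must combine the finiteness of $\psi$ on $\supp(\mu)$ with the vanishing of $\mu$ off its support — which restores two-sided mean value control — to rule out blow-up of $\psi$ as $x\to\partial(\supp(\mu))$.
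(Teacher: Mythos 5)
Your argument works when {\rm (H3a)} holds, but it does not prove the lemma under {\rm (H3b)}, and the two places where it breaks are exactly the points the paper's proof is built around. First, the claim that the H\"older bound on $W_a*\mu$ is independent of minimality, with $W_a*\mu\in L^1_{loc}$ following ``from the growth and moment control of (H3b)'', is unfounded: {\rm (H3b)} contains no moment condition at all for $N\ge3$ (only the logarithmic moment \eqref{eq:logmu} when $N=2$), and the uniform bound \eqref{eq:M} on $\Delta W_a$ puts no growth restriction on $W_a$ itself (any harmonic polynomial, or $K|x|^2$, satisfies it), so for a probability measure with unbounded support $W_a*\mu$ may a priori be identically $+\infty$. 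In the paper the local finiteness of $|W_a|*\mu$ is extracted \emph{from} the local boundedness of $\psi$: since $V\ge0$ for $N\ge3$, one has $\int_{B_1(x)}\int_{\R^N} W_a(t-z)\,d\mu(z)\,dt\le \int_{B_1(x)}\psi(t)\,dt+C|\inf W_a|$, with an additional $\log_+$ term (controlled precisely by \eqref{eq:logmu}, which you never use) when $N=2$; only then does the H\"older estimate for $W_a*\mu$ follow. So the logical order is the reverse of yours: $\psi\in L^\infty_{loc}$ first, $W_a*\mu\in C^\alpha_{loc}$ second, and your part 1 is unjustified in the {\rm (H3b)} case (relatedly, your use of $\Delta\psi=F$ in $B_d(x)$ already presupposes that $W_a*\mu$ is a locally integrable distribution). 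Under {\rm (H3a)} your Calder\'on--Zygmund route is correct but unnecessary, since $W_a\in C^\alpha_{loc}$ convolved with a compactly supported probability measure is immediately in $C^\alpha_{loc}$.

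Second, in your $L^\infty_{loc}$ argument the case $\mathrm{dist}(x,\supp(\mu))\ge\tfrac12$ is dispatched by bounding $W(x-y)$ by ``the supremum of $W$ on a bounded annulus''; when $\supp(\mu)$ is unbounded the argument $x-y$ ranges over an unbounded region on which $W$ is in general unbounded (e.g. $W=V+|x|^2/2$), so this step fails in exactly the situation {\rm (H3b)} is meant to cover. The paper instead propagates the bound from the nearest support point $y_0$ using one-sided inequalities in both directions: \eqref{eq:supmeanvalue} at $y_0$ (valid for any radius, with an $R$-dependent constant) to control averages of $\psi$, and the subharmonicity of $\psi+C_*\frac{|x-x_0|^2}{2N}$ off the support (needing only \eqref{eq:D1Wa} and the harmonicity of $V(\cdot-y)$ there, hence no integrability of $W_a*\mu$) to return from the average to the pointwise value, together with a two-case analysis according to whether $B_2(x_0)$ meets $\supp(\mu)$. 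Your remaining steps (finiteness of $\psi$ on the support via the energy and \eqref{eq:psimincst}, the finite covering argument, and the $d<\tfrac12$ comparison through the nearest support point) do match the paper. To repair the proposal you would need to establish $\psi\in L^\infty_{loc}$ first, using only \eqref{eq:D1Wa}-type information off the support, and then deduce the local integrability of $|W_a|*\mu$ from $\psi$ as above before running your elliptic-regularity argument for the H\"older bound.
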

\begin{proof}
Condition {\rm (H2)} implies that $W_a\in C^\alpha_{loc}$, so under  {\rm (H3a)}, the result is obvious.
We can thus assume that only {\rm (H3b)} holds. The difficulty in that case is to deal with  the large values of $|x|$ in the support of $\mu$.

First, we prove that $\psi$ is finite everywhere in the
support of $\mu$. Clearly, we have $\psi(x)\geq 0$ (since $W\geq 0$).
Furthermore, given $y_0\in\supp(\mu)$, using Corollary~\ref{cor:minsupp} we know that
$\psi(x)\ge\psi(y_0)$ in $B_\eps(y_0)$ and we can write:
\begin{align*}
E[\mu] =\int_{\R^N} \psi(x)d\mu(x)\ge\int_{B_\eps(y_0)} \psi(x) d\mu(x) \ge \mu(B_\eps(y_0))\psi(y_0).
\end{align*}
Since $y_0\in\supp(\mu)$, we have $\mu(B_\eps(y_0))>0$ and so 
\begin{equation}\label{eq:supppsi}
\psi(y_0) \leq \frac{E[\mu]}{\mu(B_\eps(y_0))} <\infty.
\end{equation}

Now, given $x_0\in \R^N\setminus\supp(\mu)$, let $R=d(x_0,\supp(\mu))$, then by
{\rm
(H2)} we know that
$$ 
\Delta \psi  = \Delta W_a *\mu \geq -C_* \quad \mbox{ in }B_{R}(x_0)\,. 
$$
Therefore, by applying the mean-value formula to the subharmonic function
$\psi+C_*\frac{|x-x_0|^2}{2N}$, we obtain
$$ 
\psi(x_0)\leq \frac{1}{|B_{R}|}\int_{B_{R}(x_0)} \psi(y)\,
dy + \frac{C_*}{2N} R^2.
$$
Let now $y_0$ be the closest point in $\supp(\mu)$ to $x_0$.
Then proceeding as for Lemma~\ref{lem:meanvalue2}, we obtain
$$
\psi(y_0)\ge\frac{1}{|B_{2R}|}\int_{B_{2R}(x_1)}\psi(y)dy-C(R).
$$
Combining both inequalities we deduce
\begin{equation}\label{eq:psisupp}
\psi(x_0)\leq C(d(x_0,\supp(\mu)),\psi(y_0))<\infty\,.
\end{equation}

We now want to show that $\psi$ is bounded in $B_1(x_0)$.
First, assume that $\supp(\mu)\cap B_2(x_0)=\emptyset$. Then, for all $x\in B_1(x_0)$, we have 
$$ 
\Delta \psi  = \Delta W_a *\mu \geq -C_* \quad \mbox{ in }B_{1}(x) 
$$
and proceeding as above, we deduce
$$ \psi(x)\leq  \frac{1}{|B_{1}|}\int_{B_{1}(x)} \psi(y)\,
dy + \frac{C_*}{2N} \leq \frac{2}{|B_{2}|}\int_{B_{2}(x_0)} \psi(y)\,
dy + \frac{C_*}{2N} 
$$
and we conclude (using  Lemma~\ref{lem:meanvalue2}) that
\begin{equation}\label{eq:psinosupp} 
\psi(x)\leq C(\psi(x_0)) \quad \mbox{ in } B_1(x_0).
\end{equation}

If $\supp(\mu)\cap B_2(x_0)\neq \emptyset$, we note that $\supp(\mu)\cap B_2(x_0)$ can be 
covered by a finite number of balls of size $\eps$:
$$
\supp(\mu)\cap B_2(x_0)\subset \cup_{i=1}^p B_\eps (y_i) 
$$
with $\mu(B_\eps(y_i))>0$ for all $i$.
Since $\psi$ is constant in $\supp(\mu)\cap B_\eps(y_i)$, \eqref{eq:supppsi} implies that there exists a constant $C$ (depending on $x_0$ of course) such that
\begin{equation}\label{eq:psisuppb} 
\psi(y) \leq C \quad \mbox{ for all } y\in \supp(\mu)\cap B_2(x_0).
\end{equation}
Now, for a given $x\in B_1(x_0)$, either $B_1(x)\cap  \supp(\mu)=\emptyset$ in which case we get \eqref{eq:psinosupp}, or $d(x,\supp(\mu)) <1$, in which case  we conclude using \eqref{eq:psisupp} and \eqref{eq:psisuppb}.

This concludes the proof of the fact that $\psi$ is in $L^\infty_{loc}(\R^N)$.

\medskip

To prove the H\"older continuity of $W_a*\mu$, we take $x$ and $y$ such that $|x-y|\le
1$, then
$$
|W_a(x-z)-W_a(y-z)|\le ||W_a||_{C^\alpha(B_1(x-z))}|x-y|^\alpha
$$
for all $x,z \in \R^N$, and using Sobolev embeddings with $\alpha=2-\frac{N}{p}$, we get the bound
\begin{align*}
||W_a||_{C^\alpha(B_1(x-z))}
& \le
\int_{B_1(x)}|W_a(t-z)|\,dt+||\Delta W_a||_{L^p(B_1(x-z))}\\
&\le 
\int_{B_1(x)}|W_a(t-z)|\,dt+M
\end{align*}
for all $x,z \in \R^N$ (where M is the uniform bound from {\rm (H3b)}). 
We thus have
\begin{align*}
& \left| W_a*\mu(x)-W_a*\mu(y)\right| \\
& \qquad \le\int_{\R^N}|W_a(x-z)-W_a(y-z)|\,d\mu(z)&\\
& \qquad \le\int_{\R^N}\left(\int_{B_1(x)}|W_a(t-z)|\,dt+M \right)|x-y|^\alpha \,d\mu(z)&\\
&\qquad \le \left(\int_{B_1(x)}\int_{R^N}(W_a(t-z)-2\inf
W_a)\,d\mu(z)\,dt+M \right)|x-y|^\alpha.
\end{align*}
In dimension $N\geq 3$, using the fact that $V(x)\geq 0$, we deduce
\begin{align*}
& \left| W_a*\mu(x)-W_a*\mu(y)\right| \\
&\qquad \le \left(2C|\inf W_a|+\int_{B_1(x)} \psi(t)\,dt+M
\right)|x-y|^\alpha,
\end{align*}
and in dimension $N=2$, we get
\begin{align*}
& \left| W_a*\mu(x)-W_a*\mu(y)\right| \\
& \le \left(2C|\inf W_a|+\!\int_{B_1(x)} \!\!\!\!\!\psi(t)\,dt +C \!\int_{B_1(x)}\! \int_{\R^N} \!\!\!\log_+(|t-y|) d\mu(y)\,dt +M
\right)|x-y|^\alpha\\
& \le \left(2C|\inf W_a|+\int_{B_1(x)} \psi(t)\,dt +C \int_{\R^N} \log_+(1+|y|) d\mu(y) +M
\right)|x-y|^\alpha .
\end{align*}
We can now conclude using the fact that $\psi$ is in $L^\infty_{loc}$
(and \eqref{eq:logmu} when $N=2$).
\end{proof}

We now have everything that we need in order to prove
Proposition \ref{prop:continuity} by reproducing  Evans's proof for the
continuity of the
solution of the obstacle problem (see \cite{C} for instance):

\begin{proof}[Proof of  Proposition \ref{prop:continuity}]
Corollary \ref{cor:minsupp} implies that $\psi$ is continuous in the support of
$\mu$ in the following sense: If $x_k\in\supp(\mu)$ and $x_k\to
x_0\in\supp(\mu)$, then 
$\psi(x_k) \tto \psi (x_0)$.

Assume now that we have a sequence $x_k\in \R^N$ such that $x_k\to
x_0\notin\supp(\mu)$. By definition of $\supp(\mu)$, there exists a ball
$B_r(x_0)$ such that $\mu(B_r(x_0))=0$. 
In particular, the function $\psi-W_a*\mu$ is harmonic, and thus continuous, in $
 B_{r/2}(x_0)$. Furthermore, Lemma~\ref{lem:W_a*mu} implies that $W_a*\mu$ is continuous in $
 B_{r/2}(x_0)$, so $\psi$ is continuous in $ B_{r/2}(x_0)$, and $\lim_{k\to\infty} \psi(x_k)=\psi(x_0)$.

We now assume that $x_0\in\supp(\mu)$ and consider a sequence $x_k\in\R^N$ such
that $x_k\to x_0$ as $k\to\infty$. We can always assume that $x_k\in
B_\eps(x_0)$ for all $k$. In particular, if $x_k\in \supp(\mu)$ then
\eqref{eq:psimincst} implies $\psi(x_k)=\psi(x_0)$. So we can consider a
subsequence (still denoted $x_k$) such that 
$$
x_k\notin\supp(\mu) \mbox{ for all $k$,}\qquad \lim_{k\to\infty} x_k =
x_0\in\supp(\mu).
$$
This is of course the main step in Evans's Theorem. Since $x_k\in  B_\eps(x_0)$
for all $k$, 
\eqref{eq:psimin} implies
\begin{equation}\label{eq:limsup}
 \psi(x_k)\geq \psi(x_0) \mbox{ for all $k$}.
\end{equation}
Let $y_k$ be the closest point in $\supp(\mu)\cap \overline{B_{\eps/2}}(x_0)$ to
$x_k$. So \eqref{eq:psimincst} implies that $ \psi(y_k)=\psi(x_0)$. We denote
$\delta_k = |x_k-y_k|$. Notice that $\delta_k$ is  the distance of $x_k$ to
$\supp(\mu)$, and so $\delta_k \leq |x_k-x_0|\to 0$.
Inequality \eqref{eq:supmeanvalue} implies
$$
 \psi(x_0)  = \psi(y_k )  \geq
\frac{1}{|B_{2\delta_k}(y_k)|}\int_{B_{2\delta_k}(y_k)} \psi(y)\, dy
-C\delta_k^\alpha ,
$$
and so
$$  
\frac{1}{|B_{2\delta_k}(y_k)|}\int_{B_{2\delta_k}(y_k)} [\psi(y)-\psi(x_0)]\, dy
\leq C\delta_k^\alpha .
$$
Since $\psi(y)-\psi(x_0)\geq 0$ in $B_{2\delta_k}(y_k)$ (for $k$ large enough)
and $B_{\delta_k/2}(x_k)\subset B_{2\delta_k}(y_k)$, we deduce
\begin{align*}
 \frac{1}{4^d |B_{\delta_k/2}(x_k)|}\int_{B_{\delta_k/2}(x_k)}[
\psi(y)-\psi(x_0)]\,
dy \leq C\delta_k^\alpha .
\end{align*}
Furthermore, since $B_{\delta_k/2}(x_k)$ is away from $\supp(\mu)$, and $\Delta
V(x)=0$ for all $x\ne0$, we deduce by \eqref{eq:D1Wa} in {\rm (H2)} that
$$ 
\Delta \psi  = \Delta W_a *\mu \geq -C_* \quad \mbox{ in }B_{\delta_k/2}(x_k)
\,. 
$$
Since the function $\psi+C_* \frac{|x-x_k|^2}{2N}$ is subharmonic, we deduce
that
$$ 
\psi(x_k)\leq \frac{1}{|B_{\delta_k/2}|}\int_{B_{\delta_k/2}(x_k)} \psi(y)\,
dy + \frac{C_*}{2N} \delta_k^2
$$
and so
$$ 
\psi(x_k)-\psi(x_0) \leq
\frac{1}{|B_{\delta_k/2}|}\int_{B_{\delta_k/2}(x_k)}\!\!\!\!\!\!
[\psi(y)-\psi(x_0)]\, dy  + \frac{C_*}{2N}\delta_k^2 \leq 4^d \delta_k^\alpha
+ \frac{C_*}{2N}\delta_k^2.
$$
Together with \eqref{eq:limsup}, this implies
$$ 
\lim_{k\to\infty}\psi(x_k)=\psi(x_0)\,,
$$
and the result of Proposition \ref{prop:continuity} follows.
\end{proof}


\section{Regularity of $\mu$}
\setcounter{equation}{0}

In this section, we prove the regularity results for $\psi$ and $\mu$ stated in
Theorems \ref{thm:regularity} and \ref{thm:BV}. For that, we are going back and
forth between the regularity of the solution of the obstacle problem
\eqref{eq:obstacle2l} and the regularity of $F=\Delta W_a *\mu$.
To begin with, we note that Minkowski integral's inequality \cite[A.1]{St} and
{\rm(H3)} implies
\begin{align*} 
\left( \int_{B_1(x_0)} |F(x)|^p\, dx  \right)^{1/p}
 & = \left(\int_{B_1(x_0)} \left|\int_{\R^N}\Delta W_a (x-y)\, d\mu(y)
\right|^p\, dx \right)^{1/p} \\
 & \leq \int_{\R^N} \left( \int_{B_1(x_0)}| \Delta W_a (x-y)|^p\, dx
\right)^{1/p}\, d\mu(y) \\
 & \leq M\,,
\end{align*}
and so $F$ is locally uniformly in $L^p$.

\subsection{Proof of Theorem \ref{thm:regularity} (i)}
If we now assume that {\rm (H2)} holds with $p>N$. Then, for all $x_0\in
\supp(\mu)$, we have  $F\in L^p(B_\eps(x_0))$ with $p>N$, and  it is a classical
result \cite{Gu} that the solution of the obstacle problem \eqref{eq:obstacle2l}
is such that $\Delta \psi $ is a function and satisfies
$$ 
-F \leq -\Delta \psi \leq \max \{0,-F\} \qquad \mbox{ in } B_\eps(x_0).
$$
Since this is true for all $x_0\in \supp(\mu)$, we deduce that $\mu$ has density
$\rho$ and
\begin{equation}\label{eq:boundmu}  
0 \leq \rho \leq \max \{F,0\} \quad \mbox{ in } \R^N
\end{equation}
and so $\rho$ is locally uniformly in $L^p(\R^N)$. By assumption, we know that
$\rho$ is compactly supported, then $\rho\in L^p(\R^N)$ and thus, $F=\Delta
W_a\ast \rho \in C(\R^N)$ by standard convolution properties since  $p>N\geq 2$.
Using again \eqref{eq:boundmu} and the compact support of $\rho$, we deduce that
$\rho \in L^\infty(\R^N)$. This concludes the proof of Theorem
\ref{thm:regularity} when $(i)$ holds.

\subsection{Proof of Theorem \ref{thm:regularity} (ii)}
Unfortunately, when $p\leq N$ (which is the case for kernels of the form
${|x|^q}/{q}$ when $q\leq 1$), inequality  \eqref{eq:boundmu} does not hold, and
the argument above cannot be used. We are thus now going to prove the $L^\infty$
regularity of $\mu$ under Assumption $(ii)$ of Theorem \ref{thm:regularity}. 

Since we are assuming that $\supp (\mu) \subset B_R(0)$, and we are only
interested in the properties of $\psi$ and $\mu$ in a neighborhood of
$\supp(\mu)$, it is possible to modify the values of $W_a$ outside a ball
$B_{2R}(0)$ without changing the values of $\mu$ and $\psi $ in $B_R$ as
discussed in Section 3. We can thus assume that $\Delta W_a$ has compact support
and that
$$
\Delta W_a\in W^{\epsilon,1}(\R^N) \mbox{  for some $\epsilon>0$.}
$$
We will then use the following lemma (with $K=\Delta W_a$):

\begin{lemma}\label{lem:bootstrap}
If $K\in W^{\epsilon,1}(\R^N)$ with compact support and $\vphi\in
C^\alpha(\R^N)$ is a bounded function, then $K*\vphi \in C^{\alpha+\eps}$.
\end{lemma}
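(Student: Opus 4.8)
The plan is to prove the statement by estimating second differences of $u := K*\varphi$. Since $K\in W^{\epsilon,1}(\R^N)\subset L^1(\R^N)$ and $\varphi\in L^\infty(\R^N)$, the convolution $u$ is a priori bounded and uniformly continuous, and I would upgrade this to $C^{\alpha+\epsilon}$ via the classical characterization of H\"older--Zygmund spaces: if $u$ is bounded and $\sup_{x\in\R^N}\,|u(x+h)-2u(x)+u(x-h)|\le C\,|h|^{\beta}$ for all $|h|\le 1$, then $u\in C^{\beta}(\R^N)$ (with the usual convention $C^\beta=C^{0,\beta}$ if $0<\beta<1$ and $C^{\beta}=C^{1,\beta-1}$ if $1<\beta<2$; here $\beta=\alpha+\epsilon$, and in the application $\alpha=0$ with $\epsilon$ small, so $\beta<1$ and there is no borderline issue).

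First I would record the elementary identity, obtained by expanding the four terms and changing variables inside the convolution integral,
\begin{equation*}
u(x+2h)-2u(x+h)+u(x)=\Bigl(\bigl(K(\cdot+h)-K\bigr)*\bigl(\varphi(\cdot+h)-\varphi\bigr)\Bigr)(x),
\end{equation*}
which, after the harmless shift $x\mapsto x-h$ and Young's convolution inequality, gives
\begin{equation*}
\sup_{x\in\R^N}\,\bigl|u(x+h)-2u(x)+u(x-h)\bigr|\ \le\ \bigl\|K(\cdot+h)-K\bigr\|_{L^1(\R^N)}\;\bigl\|\varphi(\cdot+h)-\varphi\bigr\|_{L^\infty(\R^N)}.
\end{equation*}
The last factor is bounded by $[\varphi]_{C^\alpha}\,|h|^\alpha$, so the whole matter reduces to showing $\|K(\cdot+h)-K\|_{L^1}\le C\,|h|^{\epsilon}$ for $|h|\le 1$.

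This bound is a form of the fractional Sobolev embedding, namely $W^{\epsilon,1}(\R^N)\hookrightarrow B^{\epsilon}_{1,\infty}(\R^N)$; it is standard, but I would include the short averaging argument for completeness. Setting $g(\tau):=\|K(\cdot+\tau)-K\|_{L^1(\R^N)}$, translation invariance of the $L^1$ norm gives the subadditivity $g(\tau+\sigma)\le g(\tau)+g(\sigma)$, whence for $|\tau_0|=r$
\begin{equation*}
g(\tau_0)=\frac{1}{|B_r|}\int_{B_r}g(\tau_0)\,d\tau\ \le\ \frac{2}{|B_r|}\int_{B_{2r}}g(\sigma)\,d\sigma\ \le\ \frac{2\,(2r)^{N+\epsilon}}{|B_r|}\int_{\R^N}\frac{g(\sigma)}{|\sigma|^{N+\epsilon}}\,d\sigma\ \le\ C\,r^{\epsilon},
\end{equation*}
the last integral being, up to a dimensional constant, the finite Gagliardo seminorm of $K$ in $W^{\epsilon,1}$. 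Here the compact support of $K$ is used only to ensure $K\in L^1(\R^N)$, hence that $u$ is well defined; for $|h|>1$ one simply bounds $g(h)\le 2\|K\|_{L^1}$, which does not affect the H\"older estimate.

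Putting the pieces together yields $\sup_x|u(x+h)-2u(x)+u(x-h)|\le C\,[\varphi]_{C^\alpha}\,|h|^{\alpha+\epsilon}$ for $|h|\le 1$, and since $u$ is bounded the Zygmund-space characterization delivers $u\in C^{\alpha+\epsilon}(\R^N)$, which is the assertion. I expect the only genuinely non-routine step to be the last bound on $\|K(\cdot+h)-K\|_{L^1}$, i.e.\ the passage from the integral (Gagliardo) condition defining $W^{\epsilon,1}$ to a pointwise modulus-of-continuity estimate; the second-difference identity and the final Zygmund argument are bookkeeping and classical, respectively.
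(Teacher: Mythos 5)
Your proof is correct, but it takes a genuinely different route from the paper's. The paper argues through the fractional Laplacian: it writes $(-\Delta)^{\eps/2}(K*\vphi)=[(-\Delta)^{\eps/2}K]*\vphi$, uses (tacitly) that $K\in W^{\eps,1}(\R^N)$ gives $(-\Delta)^{\eps/2}K\in L^1(\R^N)$, so that this expression is the convolution of an $L^1$ function with a bounded $C^\alpha$ function and hence lies in $C^\alpha$, and then invokes the H\"older regularity theory for the fractional Laplace equation from \cite{S} to lift $K*\vphi$ to $C^{\alpha+\eps}$. You instead work with second differences: your identity $u(x+2h)-2u(x+h)+u(x)=\bigl(K(\cdot+h)-K\bigr)*\bigl(\vphi(\cdot+h)-\vphi\bigr)(x)$ is correct, Young's inequality then reduces the lemma to the translation estimate $\|K(\cdot+h)-K\|_{L^1}\le C|h|^{\eps}$, and your subadditivity-plus-averaging derivation of that estimate from the Gagliardo seminorm (i.e.\ the embedding $W^{\eps,1}\hookrightarrow B^{\eps}_{1,\infty}$) is sound; the Zygmund characterization of H\"older--Besov spaces for non-integer exponents finishes the argument. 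What your route buys is self-containedness: it avoids the fractional Laplacian machinery and the citation to \cite{S}, and it proves by hand essentially the same Besov-type fact that the paper's step ``$(-\Delta)^{\eps/2}K\in L^1$'' relies on without proof. Two caveats. First, your parenthetical ``in the application $\alpha=0$'' is inaccurate: in the bootstraps of Theorems \ref{thm:regularity}(ii) and \ref{thm:regularityfrac} the lemma is applied with $\alpha=2s-\frac{N}{p}>0$ and then with successively larger exponents, including $\alpha\in(1,2)$. Second, for $\alpha>1$ the bound $\|\vphi(\cdot+h)-\vphi\|_{L^\infty}\le[\vphi]_{C^\alpha}|h|^{\alpha}$ fails (first differences of a $C^{1,\alpha-1}$ function are only $O(|h|)$), so as written your estimate only yields $C^{1+\eps}$ in that range; the fix is one line, namely to apply the same argument to $\nabla u=K*\nabla\vphi$ with $\nabla\vphi\in C^{\alpha-1}$, and one should also shrink $\eps$ slightly when $\alpha+\eps$ would hit an integer, which is harmless for the bootstrap.
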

\begin{proof}[Proof of Lemma \ref{lem:bootstrap}]
We note that
$$ 
(-\Delta)^{\eps/2} (K*\vphi)  =  [(-\Delta)^{\eps/2} K]*\vphi
$$
is the convolution of an $L^1(\R^N)$ function with a $\mathcal C^\alpha(\R^N)$
function. We thus have 
$(-\Delta)^{\eps/2} (K*\vphi)  \in \mathcal C^\alpha(\R^N)$. Standard regularity
result for fractional elliptic equation (see \cite{S}) implies $K*\vphi \in
\mathcal C^{\alpha+\eps}(\R^N)$.
\end{proof}

We will then rely on the following important result for the regularity of the
solution of the obstacle problem in \cite{C}:
\begin{proposition}\label{prop:regularity}
Let $\psi $ be the solution of the obstacle problem \eqref{eq:obstacle2l}. Then
up to $C^{1,1}(\R^N)$ the function $ \psi$ is as regular as $W_a*\mu$.
More precisely, we have
\begin{itemize}
\item If $W_a*\mu$ has a modulus of continuity $\sigma(r)$, then $ \psi$ has a
modulus of continuity $C\sigma(2r)$.
\item If $\nabla W_a*\mu$ has a modulus of continuity of $\sigma(r)$, then
$\nabla
  \psi$ has a modulus of continuity $C\sigma(2r)$.
\end{itemize}
\end{proposition}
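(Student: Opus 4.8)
The plan is to deduce Proposition~\ref{prop:regularity} from the classical regularity theory of the obstacle problem with zero right‑hand side (equivalently, with an obstacle), following Caffarelli~\cite{C}. First I would remove the inhomogeneity $F=\Delta(W_a*\mu)$ by setting $u:=\psi-W_a*\mu$ in $B_\eps(x_0)$. Using \eqref{eq:mupsil} together with $-\Delta(W_a*\mu)=-F$, one checks that $-\Delta u=\mu\ge 0$, so $u$ is superharmonic; that $u$ is harmonic on the non‑coincidence set $\{\psi>C_0\}=\{u>g\}$; and that $u\ge g$, where $g:=C_0-W_a*\mu$ plays the role of the obstacle. Hence $u$ is precisely the solution of a zero‑right‑hand‑side obstacle problem with obstacle $g$. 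Since $\psi=u+W_a*\mu$ and $g$ differs from $W_a*\mu$ only by a sign and a constant, the two bullet points reduce to the statement: for the zero‑RHS obstacle problem, the solution inherits, up to $C^{1,1}$, the modulus of continuity (resp. gradient modulus of continuity) of the obstacle.

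For that statement, on the open set $\{u>g\}$ the function $u$ is harmonic, hence $C^\infty$, and on the interior of the coincidence set $\{u=g\}$ one has $u=g$; so everything reduces to controlling $u$ (and $\na u$) near a free boundary point $z\in\Gamma:=\pa\{u>g\}$. Fix such a $z$ and a radius $r$. The key point is that the boundary of $\Omega_r:=\{u>g\}\cap B_r(z)$ splits into a portion inside $\pa B_r(z)$, where $u$ equals its already‑controlled outer trace, and a portion inside $\{u=g\}$, where $u=g$. Applying the maximum principle to the harmonic function $u$ on $\Omega_r$, and using $u=g$ on $B_r(z)\setminus\Omega_r$, yields
\begin{equation*}
\min\Big(\inf_{\pa B_r(z)}u,\ \inf_{B_r(z)}g\Big)\ \le\ u\ \le\ \max\Big(\sup_{\pa B_r(z)}u,\ \sup_{B_r(z)}g\Big)\qquad\text{on }B_r(z).
\end{equation*}
Combining this sandwich with the oscillation decay of harmonic functions and iterating over dyadic radii gives $\mathrm{osc}_{B_r(z)}u\le C\,\sigma(2r)$ when $\sigma$ is a modulus of continuity of $g$; a covering argument then propagates this modulus to all of $B_\eps(x_0)$, which is the first bullet.

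For the gradient statement I would first use the first bullet (bootstrapped if needed) to know $u\in C^1$, so that at a free boundary point $z$ the nonnegative function $u-g$ attains its minimum $0$, whence $u(z)=g(z)$ and $\na u(z)=\na g(z)$. Subtracting the affine map $\ell(x)=g(z)+\na g(z)\cdot(x-z)$ and applying the quadratic‑growth estimate of the obstacle problem (here adapted to the modulus $\sigma$, since $g-\ell$ vanishes to first order at $z$ with $|\na(g-\ell)(x)|\le\sigma(|x-z|)$) gives $\sup_{B_r(z)}|u-\ell|\le C\,r\,\sigma(2r)$; interior gradient estimates for the harmonic function $u$ on $\{u>g\}$, together with $\na u=\na g$ on the coincidence set, upgrade this to $\mathrm{osc}_{B_r(z)}\na u\le C\,\sigma(2r)$, the second bullet. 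Finally $\psi=u+W_a*\mu$ inherits both moduli, and when $W_a*\mu$ is at least $C^{1,1}$ one recovers the classical $C^{1,1}$ bound for $\psi$ — this is the meaning of ``up to $C^{1,1}$''. The main obstacle I anticipate is exactly this free‑boundary step: one cannot assume the coincidence set has positive Lebesgue density at $z$ (that would need a non‑degeneracy hypothesis on $F$ which we do not impose), so the dyadic iteration must rely purely on the soft maximum‑principle sandwich above rather than on any ``fatness'' of $\{u=g\}$; closing this iteration cleanly — balancing the decay of the harmonic part against the accumulation of the $\sigma(2^{-k}r)$ terms, and running the simultaneous bootstrap to $C^1$ in the gradient case — is the delicate point, and is precisely what is carried out in \cite{C}, to which I would refer for the detailed estimates.
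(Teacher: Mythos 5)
Your proposal takes essentially the same route as the paper: the paper does not prove this proposition but quotes it as Caffarelli's regularity theorem from \cite{C}, and your substitution $u=\psi-W_a*\mu$ with obstacle $g=C_0-W_a*\mu$ is precisely the standard reduction that puts \eqref{eq:obstacle2l} into the zero-right-hand-side form to which that theorem applies. The free-boundary estimates you sketch (and explicitly defer to \cite{C}) are exactly the content of the cited result, so your argument is correct and coincides with the paper's.
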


Using a bootstrap argument and Lemma \ref{lem:bootstrap}, we can now prove
Theorem \ref{thm:regularity} when $(ii)$ holds:

\begin{proof}[Proof of Theorem \ref{thm:regularity} (ii)]
{\rm (H2)} together with the compact support of $\Delta W_a$ imply that $\Delta
W_a*\mu \in L^p(\R^N)$ and so $W_a*\mu\in W^{2,p}(\R^N)$ for some $p>N/2$. In
particular, we have 
$$ 
W_a*\mu\in \mathcal C^\alpha(\R^N) \quad \mbox{ with } \alpha= 2 -\frac N p,
$$ 
and Proposition \ref{prop:regularity} implies
$$  
\psi \in \mathcal C^\alpha(\R^N) \quad \mbox{ with } \alpha= 2-\frac N p.
$$
Since $\mu = -\Delta   \psi+\Delta W_a*\mu$, we can write
$$ 
W_a*\mu = -W_a*\Delta   \psi +W_a*(\Delta W_a*\mu)=  -\Delta W_a*   (\psi
+W_a*\mu).
$$
As $\Delta W_a\in W^{\eps,1}(\R^N)$,  we can use Lemma \ref{lem:bootstrap} to
show that
$$
W_a*\mu \in  \mathcal C^{\alpha +\eps}(\R^N).
$$
A simple bootstrap argument yields that $W_a*\mu$ and $ \psi$ are both  in
$\mathcal C^{1,1}(\R^N)$, and thus that $\mu$ has density $\rho\in
L^\infty(\R^N)$.
\end{proof}


\subsection{$BV$ estimate and measure theoretic regularity of the free boundary}
We now prove Theorem \ref{thm:BV}.
We note that under the assumption of Theorem~\ref{thm:BV}, we have $F \in
W^{1,1}_{loc}(\R^N)$
and, for the second part of the statement, we get $F(x) \neq 0$ in a bounded
open neighborhood of $\supp(\mu)$. 

Under these assumptions, the regularity in $\BV_{loc}(\R^N)$ of $\Delta \psi$,
where $\psi$ solves the obstacle problem \eqref{eq:obstacle2l} is a classical
result, which implies Theorem \ref{thm:BV}.
We will sketch the  proof of this result for the reader's sake. The proof that
we give below was first proposed by Brezis and Kinderlehrer in \cite{BK}. 

\begin{proof}[Proof of Theorem \ref{thm:BV}]
First, we recall that the solution of the obstacle problem \eqref{eq:obstacle2l}
can be approximated
by the solutions $\psi_\delta$ of the nonlinear equation
\begin{equation}\label{eq:Deltadelta}
\begin{array}{rrr}
 -\Delta \psi_\delta +\beta_\delta(\psi_\delta-C_0)&=&-F\;\;\;in\;\Omega\\
  \psi_\delta&=&\psi\;\;\; on\;\partial\Omega
\end{array}
\end{equation}
where $\Omega=B_\epsilon(x_0)$ with $x_0\in\supp(\mu)$ and $\beta_\delta$ is an
increasing function satisfying $s\beta_\delta (s)\geq
0$ for all $s$ and such that 
$$
\beta_\delta(s)\tto 
\left\{\begin{array}{ll} 0 & \mbox{ when } s>0 \\ -\infty & \mbox{ when } s<0
\end{array}
\right.
\quad \mbox{ as $\delta\to 0$.}
$$ 
Here, $\Omega=B_\eps(x_0)$ for any point $x_0\in \supp(\mu)$. It is a classical
result, see \cite{BK} for instance, that $\psi_\delta$ converges to $\psi$
locally uniformly in $C^{1,\gamma}(\Omega)$ provided $F$ is in $L^\infty(\Omega)$ 
(which we proved in Theorem~\ref{thm:regularity}).

Let now $\frac{\pa}{\pa \xi}$ denote any directional derivative, we are going to
show that for any compact set $K\subset \subset  \Omega$, we have
\begin{equation}\label{eq:DeltaBV}
 \int_K \left|\frac{\partial}{\partial x_i}\Delta\psi_\delta\right|\le C.
\end{equation}
where $C$ does not depend on $\delta$. Taking the limit  $\delta\to0$ and using
the  l.s.c. of the total variation, we deduce  that $\Delta \psi\in
BV_{loc}(\Omega)$, which gives the result. 

In order to prove \eqref{eq:DeltaBV}, we differentiate \eqref{eq:Deltadelta}:
\begin{equation}\label{eq:Delta'}
  -\Delta\pa_\xi \psi_\delta +\beta_\delta'(\psi_\delta-C_0)\pa_\xi \psi_\delta
=-\pa_\xi F
  \end{equation}
Let now $\chi$ be a test function in $\mathcal D(\Omega)$ such that $\chi \geq
0$  in $\Omega$ and $\chi=1$ in $K$. We multiply \eqref{eq:Delta'} by $\chi
\sgn(\pa_\xi \psi_\delta )$ and integrate over $\Omega$ to deduce
$$ 
 -\int_\Omega \chi \sgn(\pa_\xi \psi_\delta ) \Delta \pa_\xi\psi_\delta \, dx 
+\int_\Omega \beta_\delta'(\psi_\delta-C_0)|\pa_\xi \psi_\delta | \chi \, dx
=
-\int_\Omega \pa_\xi F \chi \sgn(\pa_\xi \psi_\delta )\, dx\,.
$$
Integrating by parts the left hand side yields
\begin{align*}
&   \int_\Omega \chi \sgn'(\pa_\xi \psi_\delta ) |\na  \frac{\pa
}{\pa\xi}\psi_\delta|^2 \, dx  +\int_\Omega \beta_\delta'(\psi_\delta-C_0)|\pa_\xi
\psi_\delta | \chi \, dx \\
& \qquad =  -\int_\Omega \na \chi \na \pa_\xi \psi_\delta  \sgn(\pa_\xi
\psi_\delta ) \, dx-\int_\Omega \pa_\xi F \chi \sgn(\pa_\xi \psi_\delta )\,
dx\,.
\end{align*}
Using the fact that $\sgn'(s)\geq 0$ for all $s$, we deduce
\begin{align*} \int_\Omega \beta_\delta'(\psi_\delta-C_0)|\pa_\xi \psi_\delta |
\chi \, dx  & \leq 
-\int_\Omega \na \chi \na |\pa_\xi \psi_\delta | \, dx-\int_\Omega \pa_\xi F
\chi \sgn(\pa_\xi \psi_\delta )\, dx\\
& \leq 
\int_\Omega \Delta \chi  |\pa_\xi \psi_\delta | \, dx+ \int_\Omega | \pa_\xi F|
\chi \, dx\,.
\end{align*}
Furthermore, multiplying \eqref{eq:Deltadelta} by $(\psi_\delta -C_0) \chi$, it is
easy to show that
$$ 
\int_K |  \pa_\xi \psi_\delta |^2 \, dx \leq C(K)
$$
for some constant depending on $K$ but not on $\delta$ (using the
regularity of $F$ and the fact that $\psi_\delta$ converges locally uniformly to
$\psi$).  We conclude that 
$$  
\int_K \beta_\delta'(\psi_\delta-C_0)|\pa_\xi \psi_\delta | \, dx  \leq C
$$
with $C$ independent of $\delta$. Finally, going back to \eqref{eq:Delta'}, we
get
$$ 
\int_K |\Delta\pa_\xi \psi_\delta |\, dx \leq \int_K
\beta_\delta'(\psi_\delta-C_0)|\pa_\xi \psi_\delta| \, dx+ \int_K|\pa_\xi F|\,
dx\leq C
$$
and the result follows.

\

To prove the second part of the Theorem, we note that the function
$$
\frac{-\Delta\psi+F}{F} = \frac{\rho}{F}
$$
is almost everywhere equal to the indicator function of the set 
$\{\psi=\psi(x_0)\}\cap B_\eps(x_0)$.
If $F$ is never zero, we deduce that this function is in $BV_{loc}$, thus
proving that  $\{\psi=\psi(x_0)\}\cap B_\eps(x_0)$, and thus $ \supp(\mu)\cap
B_\eps(x_0)$ has finite perimeter.
Here, we use \eqref{eq:suppcontact} and the fact that if $E$ is a subset of $F$
and $|F\setminus E|=0$, then $E$ and $F$ have the same perimeter.
\end{proof}



\section{More singular than Newtonian repulsion}
\setcounter{equation}{0}

In this section, we consider kernel that are more repulsive than Newtonian. We
remind the reader that we assume that $W$ satisfies {\rm (H1)} and
{\rm (H2${}^s$)}. Moreover, we consider a  $\eps$-local minimizer $\mu$ (in the
sense of Definition \ref{def:min}) such that $\supp(\mu)$ is compact. We thus
have $\supp(\mu) \subset B_R(0)$ for some large $R$. As discussed in Section 3, we can cut off the values of $W_a(x)$ outside a large ball since those
values have no influence on the values of the potential $\psi$ in $B_R$, or on the
energy $E[\mu]$. Throughout this section, we can therefore assume
\begin{enumerate}
\item[(H3${}^s$)] The kernel $W_a$ is compactly supported, and there exists a
constant $M$ such that
\begin{equation}\label{eq:constantMs}
\|(-\Delta)^s W_a\|_{L^p(B_1(x))} \leq M \qquad \mbox { for all $x\in \R^N$}
\end{equation}
where $p\in(\frac{N}{2s},\infty]$.
\end{enumerate}

 \subsection{Continuity of $\psi$: Proof of Proposition \ref{prop:continuitys}}

A key tool in the proof of the continuity of $\psi$ in the Newtonian case was
the mean-value formula for Laplace's equation. Fortunately, this formula can be
generalized to the fractional Laplace equation (see \cite{S}). However, as usual
with fractional powers of the Laplacian, this formula is non local which
complicates the proof of Proposition \ref{prop:continuitys}. For that reason,
even though the main ideas are the same as in the proof of Proposition
\ref{prop:continuity}, we present here the proof of Proposition
\ref{prop:continuitys} in full details.

Our first task is to state this generalized mean-value formula. For that, we
recall that  $V_s(x)=\frac{c_{N,s}}{|x|^{N-2s}}$ is the fundamental solution of
$(-\Delta)^s$.
Proceeding as in \cite{S}, we define the function $\Gamma$ (we drop the index
$s$ for the sake of simplicity) as follows:
$$ 
\Gamma(x) = \left\{ \begin{array}{ll}
V_s(s) & \mbox{ in } |x|\geq 1 \\
A x^2 + B & \mbox{ in } |x|\leq 1 
\end{array}
\right.
$$
where the constant $A$ and $B$ are chosen in such a way that $\Gamma $ be a
$C^{1,1}$ function.
Given $\lambda>0$, we then scale $\Gamma$ in the following way:
$$
\Gamma_\lambda(x)=\frac{1}{\lambda^{N-2s}}\Gamma\left(\frac{x}{\lambda}\right).
$$ 
Note that the function $\Gamma_\lambda(x)$ coincides with $V_s(x)$ outside of
the ball of radius
$\lambda$. Furthermore, if $\lambda_1\le\lambda_2$, then
$\Gamma_{\lambda_1}\ge\Gamma_{\lambda_2}$.
Finally, we define
$$
\gamma_\lambda:= (-\Delta)^s\Gamma_\lambda.
$$
Note that $\gamma_\lambda(x) = \frac{1}{\lambda^N}\gamma_1(x/\lambda)$.
We then have the following result.

\begin{lemma}[\cite{S}]\label{lem:gamma}
\item[(i)] For all $\lambda>0$,  $\gamma_\lambda$ is a positive integrable
continuous function of unit mass. 
\item[(ii)] $x\mapsto \gamma_\lambda(x)$ decays like $\frac{1}{|x|^{N+2s}} $ as
$|x|\to \infty$.
\item[(iii)] The family of function $\{ \gamma_\lambda\}_{\lambda>0}$ is  an
approximation of the identity.
\end{lemma}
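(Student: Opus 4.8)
The plan is to reduce everything to the case $\lambda=1$ by means of the scaling identity $\gamma_\lambda(x)=\lambda^{-N}\gamma_1(x/\lambda)$ already recorded above. This scaling preserves the $L^1$ norm and positivity, leaves the tail exponent $N+2s$ invariant, and reduces the approximation-of-the-identity statement (iii) to the integrability of $\gamma_1$ together with the concentration estimate $\int_{|x|>\delta}\gamma_\lambda\,dx=\int_{|y|>\delta/\lambda}\gamma_1(y)\,dy\to0$ as $\lambda\to0^+$. So the whole lemma follows once we know that $\gamma_1:=(-\Delta)^s\Gamma$ is a positive continuous function lying in $L^1(\R^N)$ with $\int_{\R^N}\gamma_1=1$ and decaying like $|x|^{-N-2s}$. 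To start, I note that $\Gamma$ is a bounded $C^{1,1}(\R^N)$ function (smooth away from $\partial B_1$, and $C^{1,1}$ across $\partial B_1$ by the matching of value and gradient that defines $A,B$), so that $(-\Delta)^s\Gamma$ is defined pointwise by its singular integral and is continuous; this is the standard regularity statement for $(-\Delta)^s$, $s<1$, on $C^{1,1}\cap L^\infty$ functions, for which I would simply quote the corresponding result in \cite{S}.

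For the quantitative properties I would use the decomposition $\Gamma=V_s+h$, $h:=\Gamma-V_s$. This $h$ is supported in $\overline{B_1}$, belongs to $L^1(\R^N)$ (it blows up only like $-|x|^{2s-N}$ near the origin, which is integrable in $\R^N$), and satisfies $h\le0$: writing $V_s-\Gamma$ as a function of $r=|x|$ on $(0,1]$, it is convex and both it and its first derivative vanish at $r=1$, hence it is nonnegative there. Since $(-\Delta)^sV_s=\delta_0$, we get $(-\Delta)^s\Gamma=\delta_0+(-\Delta)^sh$ as distributions, so on $\R^N\setminus\{0\}$ (where $\delta_0$ vanishes and both sides are continuous) $\gamma_1=(-\Delta)^sh$. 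For $|x|>1$ the point $x$ lies outside $\supp h$, so
$$\gamma_1(x)=C_{N,s}\int_{B_1}\frac{-h(y)}{|x-y|^{N+2s}}\,dy=C_{N,s}\int_{B_1}\frac{|h(y)|}{|x-y|^{N+2s}}\,dy>0,$$
and, letting $|x|\to\infty$ and using $|x-y|^{N+2s}=|x|^{N+2s}(1+o(1))$ uniformly for $y\in B_1$, this gives $\gamma_1(x)\sim C_{N,s}\,\|h\|_{L^1(\R^N)}\,|x|^{-N-2s}$, which is (ii) and shows $\gamma_1\in L^1$ at infinity.

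For the total mass I would show $\int_{\R^N}(-\Delta)^sh=0$: truncating to $B_R$ with $R>1$ and using the antisymmetry of $(h(x)-h(y))/|x-y|^{N+2s}$ under $x\leftrightarrow y$ on $B_R\times B_R$, the only surviving contribution is $C_{N,s}\int_{B_1}h(x)\big(\int_{B_R^c}|x-y|^{-N-2s}\,dy\big)\,dx$, which is $O(R^{-2s}\|h\|_{L^1(\R^N)})\to0$; hence $\int_{\R^N}\gamma_1=\int_{\R^N}\delta_0+0=1$. (Equivalently, as tempered distributions $\widehat{(-\Delta)^sh}(0)=|0|^{2s}\hat h(0)=0$.) Together with continuity this gives the $L^1$ and unit-mass parts of (i).

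The one genuinely nontrivial point — which I regard as the heart of the lemma — is the positivity of $\gamma_1$ \emph{inside} $B_1$; outside $B_1$ it was just shown, and strict positivity of $\gamma_1$ everywhere (combined with unit mass) is exactly what makes $\{\gamma_\lambda\}$ an honest approximation of the identity. To prove it I would argue that $\Gamma$ is $s$-superharmonic on all of $\R^N$, and strictly so in $B_1$: the far-away interactions are controlled by comparison with $V_s$, which is $s$-superharmonic since $(-\Delta)^sV_s=\delta_0\ge0$ and satisfies $\Gamma\le V_s$, while the near-origin interactions are controlled by the concavity of the paraboloid $A|x|^2+B$ (with $A<0$) — quantitatively, splitting $(-\Delta)^s\Gamma(x)=\tfrac{C_{N,s}}{2}\int\big(2\Gamma(x)-\Gamma(x+y)-\Gamma(x-y)\big)|y|^{-N-2s}\,dy$ into $|y|<\delta$ and $|y|>\delta$ and estimating the two pieces separately. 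This is precisely the argument of \cite{S}, which I would follow; the delicacy is that the pointwise integrand is \emph{not} of one sign in the mixed region (where exactly one of $x+y,x-y$ lies in $B_1$), so one must genuinely use the global $V_s$-comparison rather than a crude pointwise bound, and the strictness comes from the $|y|<\delta$ piece, where $2\Gamma(x)-\Gamma(x+y)-\Gamma(x-y)=-2A|y|^2+o(|y|^2)>0$. Once interior positivity is in hand, continuity and unit mass complete (i), the second paragraph gives (ii), and the scaling reduction of the first paragraph gives (iii).
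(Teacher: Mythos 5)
The paper never proves this lemma: it is quoted directly from Silvestre \cite{S}, so there is no internal argument to compare against, and your blind attempt in fact supplies more detail than the paper does. Most of what you wrote is correct: the scaling reduction to $\lambda=1$; the decomposition $\Gamma=V_s+h$ with $h\le 0$ supported in $\overline{B_1}$ (your convexity argument for $V_s-\Gamma$ on $(0,1]$ is right, since the matching forces $A<0$); the strict positivity and the exact $|x|^{-N-2s}$ asymptotics of $\gamma_1$ outside $B_1$; and the unit mass. On the last point, the truncation/antisymmetry computation is informal near the diagonal (the double integral is not absolutely convergent when $s\ge 1/2$, so the symmetrization needs the principal-value/second-difference formulation), but your Fourier alternative — $\hat\gamma_1(\xi)=1+|\xi|^{2s}\hat h(\xi)$ with both sides continuous because $\gamma_1\in L^1$ (continuity plus the tail bound) and $h\in L^1$ — closes this cleanly.

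The one genuine shortfall is exactly the point you single out as the heart: positivity of $\gamma_1$ on $\overline{B_1}$. You do not prove it; you sketch a splitting and defer to \cite{S}, and the sketch as stated does not work uniformly. For $x$ near the origin, replacing $\Gamma(x\pm y)$ by $V_s(x\pm y)$ in the far region is far too lossy: it produces the negative term $-2\bigl(V_s(x)-\Gamma(x)\bigr)\int_{|y|>\delta}|y|^{-N-2s}\,dy\asymp -g(x)\,\delta^{-2s}$ with $g(x)=V_s(x)-\Gamma(x)\sim c|x|^{2s-N}$, which overwhelms the positive near-field contribution $\asymp\delta^{2-2s}$ for any admissible $\delta\lesssim |x|$. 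The fix there is elementary and does not use $V_s$ at all: $\Gamma$ is radially nonincreasing, so for, say, $|x|\le 1/3$, every $y$ with $|y|\ge 1-|x|$ satisfies $|x\pm y|\ge 1-2|x|\ge|x|$, hence $2\Gamma(x)-\Gamma(x+y)-\Gamma(x-y)\ge 0$ pointwise, while $|y|<1-|x|$ gives the strictly positive paraboloid term $-2A|y|^2$. Near $\partial B_1$, your splitting yields three competing terms all of the same order $(1-|x|)^{2-2s}$, so the constants must genuinely be tracked; this is where the actual argument of \cite{S} (or a direct computation of $(-\Delta)^s h$ in the annulus) is indispensable. In short: (ii), the unit mass, continuity, and the scaling reduction of (iii) are established by your proposal, but the interior positivity in (i) still rests on the citation — which is, to be fair, all the paper itself offers.
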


We can now state the mean-value formula for the fractional Laplace equation:

\begin{lemma}\label{lem:meanvaluefrac}
Assume that $u$ is an upper semi-continuous, compactly supported function and
$\Omega$ an open set in $\R^N$.
\item[(i)] If $(-\Delta)^s u\geq 0$ in $\Omega$, then for all $x\in \Omega$ and
for all $0<\lambda <d(x,\partial \Omega)$, it holds that
$$ 
u(x) \geq u*\gamma_{\lambda}(x)\,.
$$

\item [(ii)] If $(-\Delta)^s u\in L^p_{loc}(\R^N)$ for some  $p>\frac{N}{2s}$, 
then for all $x\in \R^N$ and  for all $\lambda\in(0,1)$, it holds that
\begin{equation*}
 u(x)\ge u*\gamma_\lambda(x)-C\|(-\Delta)^s u\|
_{L^p(B_1(x))}  \lambda^\alpha,
\end{equation*}
with $\alpha=2s-\frac{N}{p}$, where $C$ is a constant depending only on $N$, $s$
and $p$.
\end{lemma}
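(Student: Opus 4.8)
The plan is to establish Lemma \ref{lem:meanvaluefrac} by first proving part (i), the sub-mean-value inequality for functions with $(-\Delta)^s u \geq 0$, and then deriving part (ii) as a perturbation of (i). For part (i), I would fix $x \in \Omega$ and $\lambda < d(x,\partial\Omega)$, and test the distributional inequality $(-\Delta)^s u \geq 0$ against the nonnegative test object $\Gamma_\lambda(\cdot - x)$. The point is that $\Gamma_\lambda$ is a $C^{1,1}$ truncation of the fundamental solution $V_s$ agreeing with $V_s$ outside $B_\lambda$, so that $(-\Delta)^s \Gamma_\lambda = \gamma_\lambda$ concentrates near the origin. Pairing and integrating by parts (using that $(-\Delta)^s$ is self-adjoint) should give
\[
0 \leq \langle (-\Delta)^s u, \Gamma_\lambda(\cdot - x)\rangle = \langle u, \gamma_\lambda(\cdot - x)\rangle = u*\gamma_\lambda(x),
\]
but this needs care: the correct identity is that $u(x) - u*\gamma_\lambda(x)$ equals the pairing of $(-\Delta)^s u$ with a nonnegative kernel supported in $\Omega$. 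Indeed, $\delta_0 - \gamma_\lambda = (-\Delta)^s(V_s - \Gamma_\lambda)$, and $V_s - \Gamma_\lambda$ is a nonnegative function supported in $\overline{B_\lambda} \subset \Omega - x$, so
\[
u(x) - u*\gamma_\lambda(x) = \langle u, (\delta_0 - \gamma_\lambda)(\cdot - x)\rangle = \langle (-\Delta)^s u, (V_s - \Gamma_\lambda)(\cdot - x)\rangle \geq 0,
\]
where the last inequality uses $(-\Delta)^s u \geq 0$ in $\Omega$ together with $\operatorname{supp}(V_s - \Gamma_\lambda)(\cdot - x) \subset B_\lambda(x) \subset \Omega$. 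The upper semicontinuity and compact support of $u$ guarantee all pairings make sense (and one may first mollify $u$ and pass to the limit if a rigorous justification of the pairing is wanted).

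For part (ii), I would apply the same identity but now $(-\Delta)^s u$ is merely an $L^p_{loc}$ function, not necessarily nonnegative. Writing again
\[
u(x) - u*\gamma_\lambda(x) = \int_{B_\lambda(x)} (-\Delta)^s u(y)\,(V_s - \Gamma_\lambda)(x-y)\,dy,
\]
I would bound the right-hand side from below by $-\|(-\Delta)^s u\|_{L^p(B_1(x))}\,\|V_s - \Gamma_\lambda\|_{L^{p'}(B_\lambda(0))}$ via Hölder, using $\lambda < 1$ so that $B_\lambda(x) \subset B_1(x)$. The remaining task is to estimate $\|V_s - \Gamma_\lambda\|_{L^{p'}(B_\lambda)}$: since $V_s - \Gamma_\lambda$ is comparable to $V_s(y) \sim |y|^{2s-N}$ on $B_\lambda$ (both $V_s$ and the quadratic $\Gamma_\lambda$ on $B_\lambda$ scale the same way), a direct computation in polar coordinates gives $\|V_s-\Gamma_\lambda\|_{L^{p'}(B_\lambda)} \leq C\lambda^{2s - N + N/p'} = C\lambda^{2s - N/p} = C\lambda^\alpha$, with the integrability of $|y|^{(2s-N)p'}$ near $0$ guaranteed exactly by the condition $p > N/(2s)$ (equivalently $(2s-N)p' > -N$). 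This yields $u(x) \geq u*\gamma_\lambda(x) - C\|(-\Delta)^s u\|_{L^p(B_1(x))}\lambda^\alpha$ with $\alpha = 2s - N/p$, as claimed.

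The main obstacle I anticipate is making the distributional pairing $\langle (-\Delta)^s u, (V_s-\Gamma_\lambda)(\cdot-x)\rangle$ fully rigorous when $u$ is only upper semicontinuous: one must verify that $u$ can be approximated from above (or that $(-\Delta)^s u$, defined distributionally, pairs correctly against the $C^{1,1}$ compactly supported test function $(V_s - \Gamma_\lambda)(\cdot - x)$), and that the key algebraic identity $\delta_0 - \gamma_\lambda = (-\Delta)^s(V_s - \Gamma_\lambda)$ holds — this last identity is really the content of how $\Gamma_\lambda$ was constructed and follows from $(-\Delta)^s V_s = \delta_0$ and $(-\Delta)^s \Gamma_\lambda = \gamma_\lambda$. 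A secondary technical point is checking the nonnegativity and support of $V_s - \Gamma_\lambda$, which follows from the construction (for $|y| \geq \lambda$ they coincide, and for $|y| < \lambda$ one checks $V_s \geq \Gamma_\lambda$ using that $\Gamma$ is the largest $C^{1,1}$ function below $V_s$ of the required form, or simply by explicit computation of the constants $A, B$). Since the paper attributes this lemma to Silvestre \cite{S}, I would expect the proof to largely cite or reproduce that argument, emphasizing the two identities above and the elementary $L^{p'}$ estimate on the tail.
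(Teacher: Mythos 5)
Your proposal is correct and follows essentially the same route as the paper's Appendix B proof: test $(-\Delta)^s u$ against differences of the truncated fundamental solution, integrate by parts, and in part (ii) apply H\"older together with the estimate $\|V_s\|_{L^{q}(B_{\lambda})}\le C\lambda^{2s-N/p}$ (valid exactly because $p>N/(2s)$). The only difference is that the paper sidesteps the pairing with $\delta_0$ that you flag as the main obstacle by working with two scales, i.e.\ with $\Gamma_{\lambda_1}-\Gamma_{\lambda_2}$ for $0<\lambda_1<\lambda_2$, which gives $u*\gamma_{\lambda_1}(x)\ge u*\gamma_{\lambda_2}(x)-C\|(-\Delta)^s u\|_{L^p(B_1(x))}\lambda_2^{\alpha}$, and then lets $\lambda_1\to 0$ using upper semi-continuity of $u$ (so only $\limsup_{\lambda_1\to0}u*\gamma_{\lambda_1}(x)\le u(x)$ is needed), which is precisely a rigorous version of the mollification fix you suggest.
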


We recall the proof of this lemma in Appendix \ref{app:mvf} (for the sake of
completeness). 
Note that the assumption that $u$ be compactly supported is clearly not
necessary for such a formula to hold, but $u$ must satisfy some appropriate
integrability condition. For our purpose, it is enough to consider this simpler
case. We can now prove the continuity of the potential function.

\begin{lemma}\label{lem:meanvalue2frac}
Let $\mu \in \P(\R^N)$ compactly supported, and assume that {\rm (H1)}, {\rm
(H2${}^s$)} and {\rm (H3${^s}$)} hold. Then there exists a constant $C$
depending on  $N$, $s$, $p$ and the constant $M$ appearing in
\eqref{eq:constantMs} such that the potential function $\psi=W*\mu$ satisfies
\begin{equation}\label{eq:supmeanvaluefrac}
 \psi(x)\geq \psi*\gamma_\lambda (x) -C \lambda^\alpha
 \end{equation}
for all $x\in \R^N$ and for all $\lambda \in(0,1)$ and with $\alpha=2s-\frac{N}{p}$.
\item Furthermore,
\begin{equation}\label{eq:submeanvaluefrac}
 \psi(x)\leq \psi*\gamma_\lambda (x) +C_* \lambda ^{2s}
 \end{equation}
for all $x\in \R^N\setminus \supp(\mu)$ and for  all $0<\lambda<
d(x,\supp(\mu))$, where $C_*$ is the constant appearing in
\eqref{eq:fraclaplace}. 
\end{lemma}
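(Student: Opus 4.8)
The plan is to follow the proof of Lemma~\ref{lem:meanvalue2} essentially verbatim, replacing the classical (super‑)mean‑value inequalities by their fractional counterparts from Lemmas~\ref{lem:gamma} and~\ref{lem:meanvaluefrac} and keeping the splitting $W=V_s+W_a$: as in the Newtonian case, $V_s$ satisfies an exact $s$‑super‑mean‑value inequality while the error coming from $W_a$ is controlled uniformly by the constant $M$ of {\rm (H3$^s$)}.

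For \eqref{eq:supmeanvaluefrac} I would first record that $V_s*\gamma_\lambda=\Gamma_\lambda$, which is immediate from $(-\Delta)^sV_s=\delta_0$ and $(-\Delta)^s\Gamma_\lambda=\gamma_\lambda$, and that $\Gamma_\lambda\le V_s$ pointwise (for $z\ne0$ and $\lambda'<|z|$ one has $\Gamma_\lambda(z)\le\Gamma_{\lambda'}(z)=V_s(z)$, and trivially at $z=0$), so that $V_s\ge V_s*\gamma_\lambda$ everywhere. Since, by {\rm (H2$^s$)} and Sobolev embedding, $W_a$ is H\"older continuous (hence upper semicontinuous), is compactly supported by {\rm (H3$^s$)}, and has $(-\Delta)^sW_a\in L^p_{loc}(\R^N)$, Lemma~\ref{lem:meanvaluefrac}(ii) and \eqref{eq:constantMs} give $W_a(z)\ge(W_a*\gamma_\lambda)(z)-CM\lambda^\alpha$ for all $z\in\R^N$ and $\lambda\in(0,1)$, with $\alpha=2s-\tfrac{N}{p}$. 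Adding the two inequalities yields $W(z)\ge(W*\gamma_\lambda)(z)-CM\lambda^\alpha$; integrating this at $z=x-y$ against $d\mu(y)$, using that $\mu$ is a probability measure, that $W\ge0$ and $\gamma_\lambda\ge0$ (so Tonelli's theorem applies), and that $\gamma_\lambda$ is radial, one rewrites $\int(W*\gamma_\lambda)(x-y)\,d\mu(y)$ as $\int\gamma_\lambda(z)\psi(x-z)\,dz=(\psi*\gamma_\lambda)(x)$, which is \eqref{eq:supmeanvaluefrac} (one may of course assume $\psi(x)<\infty$).

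For \eqref{eq:submeanvaluefrac} I would fix $x\notin\supp(\mu)$, put $d=d(x,\supp(\mu))$ and $0<\lambda<d$, and note that $\psi(x)<\infty$ since $W$ is bounded on the compact set $\{x\}-\supp(\mu)$, which avoids the origin. From $(-\Delta)^s\psi=\mu+[(-\Delta)^sW_a]*\mu$ and \eqref{eq:fraclaplace}, using that $B_\lambda(x)$ carries no $\mu$‑mass, we get $(-\Delta)^s\psi\le C_*$ in $B_\lambda(x)$. I would then compare $\psi$ with the rescaled fractional torsion function $\Phi(y)=\kappa_{N,s}^{-1}(\lambda^2-|y-x|^2)_+^s$ of $B_\lambda(x)$, where $\kappa_{N,s}$ is the dimensional constant with $(-\Delta)^s(1-|\cdot|^2)_+^s\equiv\kappa_{N,s}$ on $B_1$; by scaling $(-\Delta)^s\Phi=1$ in $B_\lambda(x)$, $\Phi\ge0$, $\Phi=0$ outside, and $\Phi(x)=\kappa_{N,s}^{-1}\lambda^{2s}$. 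Then $v:=\psi-C_*\Phi$ is upper semicontinuous with $(-\Delta)^sv\le0$ in $B_\lambda(x)$, so applying Lemma~\ref{lem:meanvaluefrac}(i) to $-v$ gives $v(x)\le(v*\gamma_\lambda)(x)$ and hence $\psi(x)\le(\psi*\gamma_\lambda)(x)+C_*\Phi(x)=(\psi*\gamma_\lambda)(x)+\kappa_{N,s}^{-1}C_*\lambda^{2s}$, i.e.\ \eqref{eq:submeanvaluefrac} up to the harmless dimensional factor $\kappa_{N,s}$. Equivalently, and more concretely: write $\delta_x-\gamma_\lambda(\cdot-x)=(-\Delta)^s[R_\lambda(\cdot-x)]$ with $R_\lambda:=V_s-\Gamma_\lambda\ge0$ supported in $\overline{B_\lambda}$, pair with $\psi$ and move $(-\Delta)^s$ onto $\psi$; the $\mu$‑term drops because $\supp R_\lambda(\cdot-x)\cap\supp(\mu)=\emptyset$, and what remains is bounded by $C_*\int R_\lambda=C_*\lambda^{2s}\int_{B_1}(V_s-\Gamma)$.

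The architecture above is immediate; the two places where care is needed are technical. First, Lemma~\ref{lem:meanvaluefrac} is stated for compactly supported functions, whereas $\psi=W*\mu$ is not (its $V_s*\mu$ part is not): I would localise, splitting $\psi$ into its restriction to a fixed neighbourhood of $x$ plus a tail, and control the tail through the decays $\psi(y)=O(|y|^{-(N-2s)})$ at infinity and $\gamma_\lambda(y)=O(|y|^{-(N+2s)})$ from Lemma~\ref{lem:gamma}(ii). Second, the proof of \eqref{eq:submeanvaluefrac} rests on a fractional integration by parts that moves $(-\Delta)^s$ off a comparison function ($C^{1,1}$, or integrable and singular at $x$) onto $\psi$, knowing only that $\psi$ is lower semicontinuous and that $(-\Delta)^s\psi$ is $\mu$ plus an $L^p_{loc}$ function --- this is exactly the manipulation carried out in the proof of Lemma~\ref{lem:meanvaluefrac} in Appendix~\ref{app:mvf}, which is why invoking that lemma directly (the first route above) keeps things clean. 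I expect these non‑local bookkeeping points, rather than anything conceptual, to be the main obstacle; finiteness of $\psi$ off $\supp(\mu)$, Tonelli, the scalings, and the tracking of constants are routine.
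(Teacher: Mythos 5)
Your proposal is correct and takes essentially the same route as the paper: \eqref{eq:supmeanvaluefrac} is proved exactly as there (split $W=V_s+W_a$, super-mean-value property of $V_s$, Lemma \ref{lem:meanvaluefrac}(ii) together with the uniform bound $M$ from (H3${}^s$), then Fubini--Tonelli), and your comparison argument for \eqref{eq:submeanvaluefrac} --- whether via the fractional torsion function or the pairing with $R_\lambda=V_s-\Gamma_\lambda$ --- is the same computation the paper performs with the comparison function $v=C_*V_s*\chi_{B_{r_0}(x)}$, whose correction term is likewise $C_*\int_{B_{r_0}}(V_s-\Gamma_\lambda)=C\,C_*\lambda^{2s}$, so the ``harmless dimensional factor'' you mention is present in the paper's proof as well. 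Two trivial points: you want $v=\psi-C_*\Phi$ lower (not upper) semicontinuous so that $-v$ is u.s.c.\ as Lemma \ref{lem:meanvaluefrac}(i) requires, which holds since $\psi$ is l.s.c.; and the compact-support caveat in Lemma \ref{lem:meanvaluefrac} that you carefully flag is waived by the paper itself in the remark following that lemma.
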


\begin{proof}
We can always assume that $\psi(x)<\infty$ (since otherwise
\eqref{eq:supmeanvaluefrac} clearly holds). We have $W=V_s+W_a$, where
$V_s$ satisfies the conditions of Lemma~\ref{lem:meanvaluefrac}-(i)
and $W_a$ satisfies the conditions of Lemma~\ref{lem:meanvaluefrac}-(ii), so we
can write
$$ 
W(x-y)\geq W*\gamma_\lambda (x-y) -C\|(-\Delta)^s W_a\|_{L^p(B_1(x-y))}
\lambda^\alpha
$$
for all $x,\,y\in \R^N\times\R^N$ and for all $\lambda \in(0,1)$ where $C$ only
depends on $N$, $s$ and $p$. Using \eqref{eq:constantMs}, we deduce
$$ 
\psi(x) = \int_{\R^N} W(x-y)\, d\mu(y) \geq \int _{\R^N}
W*\gamma_\lambda (x-y)\, d\mu(y) - C(N,s,p,M) \lambda^\alpha
$$
and Fubini-Tonelli theorem implies
\begin{align*}
\int _{\R^N} W*\gamma_\lambda (x-y)\, d\mu(y)= \psi*\gamma_\lambda (x)\,.
\end{align*}
Note that the integral in the left hand side is finite due to the decay at
infinity of $V_s$. The first part of the result follows.

To prove the second part of the lemma, we fix $x\in\R^N\setminus\supp(\mu)$ and
 $0<\lambda<d(x,\supp(\mu))$ and we
consider $B=B_{r_0}(x)$ the biggest ball around $x$ which is contained in $\R^N\setminus
\supp{(\mu)}$ (so $r_0=d(x,\supp(\mu))$). 
We then define $v=C_*V_s*\chi_B$, which satisfies $(-\Delta)^s
v=C_*\chi_B$. 
Proceeding as in the proof of the mean
value formula (see Appendix~\ref{app:mvf}), we can write (integration by parts):
$$ \int_{\R^N} (-\Delta)^s v (x+y)(\Gamma_{\lambda_1}(y)-\Gamma_\lambda(y))\, dy = v * \gamma_{\lambda_1}(x)-v*\gamma_{\lambda}(x)$$
and so
$$ \int_{\R^N} C_*\chi_B  (x+y)(\Gamma_{\lambda_1}(y)-\Gamma_\lambda(y))\, dy = v * \gamma_{\lambda_1}(x)-v*\gamma_{\lambda}(x)$$
Letting $\lambda_1$ go to zero, we deduce
$$C_*  \int_{B_{r_0}(0)} (V_s(y)-\Gamma_\lambda(y))\, dy = v (x)-v*\gamma_{\lambda}(x)$$
Finally, the scaling of the integral in the left hand side (recall that $\Gamma_\lambda(x)=V_s(x)$ for $|x|\geq \lambda$) gives
$$v(x)-v*\gamma_\lambda(x) = C_* C\lambda^{2s}.$$ 
Hypothesis {\rm (H2${}^s$)} implies in particular that $(-\Delta)^s \psi\leq C_*$ in $\R^N\setminus
\supp{(\mu)}$ and so
$$(-\Delta)^s(\psi-v)\le0  \mbox{ in } B .$$ 
Using Lemma~\ref{lem:meanvaluefrac} (with $u=-(\psi-v)$), we get that $\psi(x)-v(x)\le
\psi*\gamma_\lambda(x)-v*\gamma_\lambda(x)$. Hence, 
$$
\psi(x)\le
\psi*\gamma_\lambda(x)+(v(x)-v*\gamma_\lambda(x))=\psi*\gamma_\lambda(x)+C_* C\lambda^{2s}
$$
which is exactly \eqref{eq:submeanvaluefrac}.
\end{proof}

\begin{lemma}\label{lem:below}
 Assume that $W$ satisfies {\rm (H1)}, {\rm (H2${}^s$)}, and {\rm (H3${^s}$)}.
Then the potential function $\psi=W*\mu$ is bounded below in $\R^N$.
\end{lemma}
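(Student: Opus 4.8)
The statement should follow quickly once one separates the two parts of the kernel. The plan is to write $W = V_s + W_a$, so that
\[
\psi(x) = (W*\mu)(x) = (V_s*\mu)(x) + (W_a*\mu)(x),
\]
and to note that $V_s(x) = c_{N,s}|x|^{2s-N}\ge 0$ together with $\mu\ge 0$ gives $(V_s*\mu)(x)\ge 0$ for every $x\in\R^N$; hence $\psi(x)\ge (W_a*\mu)(x)$. So it suffices to bound $W_a*\mu$ from below, and since $\mu\in\PP(\R^N)$ is a probability measure it suffices in turn to show that $W_a$ is bounded below on $\R^N$: if $W_a\ge -C$ then $(W_a*\mu)(x)=\int_{\R^N}W_a(x-y)\,d\mu(y)\ge -C$, whence $\psi\ge -C$.

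To bound $W_a$ below I would use the two remaining hypotheses. By {\rm (H1)} and $V_s\in L^1_{loc}(\R^N)$, the function $W_a=W-V_s$ lies in $L^1_{loc}(\R^N)$; by {\rm (H3$^s$)} it has compact support, so in fact $W_a\in L^1(\R^N)$ and its nonlocal tail is harmless, $\int_{\R^N}|W_a(y)|(1+|y|)^{-N-2s}\,dy<\infty$; and by {\rm (H2$^s$)} (with the local uniform bound of {\rm (H3$^s$)}), $(-\Delta)^s W_a\in L^p_{loc}(\R^N)$ with $p>N/(2s)$. Interior regularity for the fractional Laplacian (see \cite{S}) then gives $W_a\in C^{0,\alpha}_{loc}(\R^N)$ with $\alpha=2s-N/p$, and since $W_a$ is compactly supported this upgrades to $W_a\in C^{0,\alpha}(\R^N)\cap L^\infty(\R^N)$; in particular $W_a\ge -\|W_a\|_{L^\infty(\R^N)}>-\infty$, which is all we need.

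The only delicate point is the boundedness (equivalently, continuity) of $W_a$: one has to invoke the interior H\"older estimate for $(-\Delta)^s W_a\in L^p_{loc}$, $p>N/(2s)$, which — because the fractional Laplacian is nonlocal — genuinely uses the global integrability of $W_a$ provided by its compact support. Equivalently one may identify $W_a$ with the Riesz potential $I_{2s}\big((-\Delta)^s W_a\big)$ up to an $s$-harmonic polynomial that must vanish since both are $o(1)$ at infinity, and then use that $I_{2s}$ maps $L^p$, $p>N/(2s)$, into $C^{0,\alpha}\cap L^\infty$. Everything else — the splitting $W=V_s+W_a$, the non-negativity of $V_s*\mu$, and the final estimate against the probability measure $\mu$ — is immediate.
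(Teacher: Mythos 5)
Your proposal is correct and follows essentially the same route as the paper: decompose $W=V_s+W_a$, use $V_s*\mu\ge 0$ to reduce to bounding $W_a*\mu$, and then use {\rm (H2${}^s$)} together with the compact support from {\rm (H3${}^s$)} to conclude that $W_a$ is continuous, hence bounded, so that $\psi\ge -\|W_a\|_{L^\infty}$. The paper simply states the continuity of $W_a$ without proof, whereas you supply the fractional elliptic regularity (Riesz potential) justification, which is a fair elaboration of the same argument.
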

\begin{proof}
 Clealy, we have $\psi-W_a*\mu=V_s*\mu\ge0$, so $\psi \geq W_a*\mu$. Furthermore
{\rm (H2${}^s$)} and {\rm (H3${}^s$)} implies that $W_a$ is continuous and
compactly supported, and thus bounded.
 The result follows.
\end{proof}

Using the previous lemmas we can now prove the following consequences.

\begin{corollary}\label{cor:minsuppfrac}
Assume that $W$ satisfies {\rm (H1)}, {\rm (H2${}^s$)}, and {\rm (H3${^s}$)}.
Let $\mu$  be a $\eps$-local minimizer in the sense of Definition {\rm
\ref{def:min}}.
Then any point $x_0\in\supp(\mu)$ is a local minimizer of $\psi=W*\mu$ in the
sense that
\begin{equation}\label{eq:psimin2frac}
 \psi(x_0) \leq \psi(x) \mbox{ for all } x\in B_\eps(x_0).
 \end{equation}
Furthermore, we have
\begin{equation}\label{eq:psimincstfrac}
\psi(x) = \psi(x_0)\qquad  \mbox{ for all } x\in \supp(\mu)\cap B_\eps(x_0).
\end{equation}
\end{corollary}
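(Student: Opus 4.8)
The plan is to follow the proof of Corollary~\ref{cor:minsupp} almost verbatim, substituting the fractional mean value inequality \eqref{eq:supmeanvaluefrac} of Lemma~\ref{lem:meanvalue2frac} for the Newtonian one \eqref{eq:supmeanvalue}. The one genuinely new point — and the place where some care is needed — is that the kernel $\gamma_\lambda$ is not compactly supported: the convolution $\psi*\gamma_\lambda(x)$ therefore picks up the values of $\psi$ arbitrarily far from $x_0$, where the a.e.\ lower bound $\psi\ge\psi(x_0)$ furnished by Proposition~\ref{prop:minsupp} is unavailable. I would control this tail contribution using the decay of $\gamma_\lambda$ from Lemma~\ref{lem:gamma}(ii) together with the trivial global bound $\psi\ge 0$ (valid since $W\ge 0$).

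Before the main estimate I would record that $\psi(x_0)<\infty$ for every $x_0\in\supp(\mu)$. Indeed, since $W\ge 0$ lies in $L^1_{loc}(\R^N)$ and $\mu$ is a compactly supported probability measure, Tonelli's theorem gives $\psi=W*\mu\in L^1_{loc}(\R^N)$, so $\psi<\infty$ a.e.; on the other hand Proposition~\ref{prop:minsupp} gives $\psi(y)\ge\psi(x_0)$ for a.e.\ $y\in B_\eps(x_0)$, and these two facts are compatible only if $\psi(x_0)<\infty$ (and $\psi(x_0)\ge0$ is clear).

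Now fix $x_0\in\supp(\mu)$ and $x\in B_\eps(x_0)$, and pick $\delta>0$ with $\overline{B_\delta(x)}\subset B_\eps(x_0)$. For $\lambda\in(0,1)$, the inequality \eqref{eq:supmeanvaluefrac}, the positivity of $\gamma_\lambda$ and of $\psi$ (Lemma~\ref{lem:gamma}(i)), and the bound $\psi(z)\ge\psi(x_0)$ for a.e.\ $z\in B_\delta(x)$ give
\begin{align*}
\psi(x)&\ge\psi*\gamma_\lambda(x)-C\lambda^\alpha
\ge\int_{B_\delta(x)}\psi(z)\,\gamma_\lambda(x-z)\,dz-C\lambda^\alpha\\
&\ge\psi(x_0)\int_{B_\delta(0)}\gamma_\lambda(w)\,dw-C\lambda^\alpha
=\psi(x_0)\bigl(1-\eta_\lambda\bigr)-C\lambda^\alpha,
\end{align*}
where $\eta_\lambda:=\int_{\R^N\setminus B_\delta(0)}\gamma_\lambda(w)\,dw$. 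Since $\gamma_\lambda(w)=\lambda^{-N}\gamma_1(w/\lambda)$ has unit mass and $\gamma_1$ decays like $|u|^{-N-2s}$ (Lemma~\ref{lem:gamma}(ii)), a change of variables gives $\eta_\lambda=\int_{|u|>\delta/\lambda}\gamma_1(u)\,du\le C(\delta)\lambda^{2s}\to0$ as $\lambda\to0$; recalling $\alpha=2s-\frac Np>0$ and $\psi(x_0)<\infty$, I would let $\lambda\to0$ to obtain $\psi(x)\ge\psi(x_0)$, which is \eqref{eq:psimin2frac}.

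Finally, \eqref{eq:psimincstfrac} follows exactly as in Corollary~\ref{cor:minsupp}: for $x\in\supp(\mu)\cap B_\eps(x_0)$, \eqref{eq:psimin2frac} applied at $x_0$ gives $\psi(x)\ge\psi(x_0)$, and since the radius $\eps$ does not depend on the base point (Remark~\ref{rem:epsilon}) we also have $x_0\in B_\eps(x)$, so \eqref{eq:psimin2frac} applied at $x$ gives $\psi(x_0)\ge\psi(x)$, whence equality. The only real obstacle in the whole argument is the non-locality of $\gamma_\lambda$ discussed above; once the two-region split is in place, the rest is a routine transcription of the Newtonian proof.
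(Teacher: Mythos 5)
Your argument is correct and follows essentially the same route as the paper: apply the fractional mean-value inequality \eqref{eq:supmeanvaluefrac}, split $\psi*\gamma_\lambda(x)$ into the part over a ball contained in $B_\eps(x_0)$, where Proposition~\ref{prop:minsupp} gives $\psi\ge\psi(x_0)$ a.e., and a tail whose $\gamma_\lambda$-mass vanishes as $\lambda\to0$, then conclude the equality on $\supp(\mu)\cap B_\eps(x_0)$ via Remark~\ref{rem:epsilon}. The only (harmless) deviation is that you control the tail with the trivial bound $\psi\ge0$ coming from {\rm (H1)} and the quantitative decay of $\gamma_1$, where the paper invokes the lower bound of Lemma~\ref{lem:below} together with the approximation-of-identity property of $\gamma_\lambda$.
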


\begin{proof}
First, we note that \eqref{eq:supmeanvaluefrac} implies
\begin{equation*}
\psi(x) \ge \lim_{\lambda \to 0 } \psi*\gamma_\lambda (x) \quad\mbox{for all
$x$.}
\end{equation*}
Next, we recall that $\psi(x)\ge\psi(x_0)$ for a.e. $x\in B_\epsilon(x_0)$.
Using Lemma \ref{lem:below} and
\ref{lem:meanvalue2frac}, we can thus write, for $x\in B_{\eps}(x_0)$:
\begin{align*}
\psi*\gamma_\lambda (x) &= \int_{B_{\bar\eps}(0)} \psi(x-y)
\gamma_\lambda(y)\,dy + \int_{\R^N/B_{\bar\eps}(0)} \psi(x-y)
\gamma_\lambda(y)\,dy\\
&\geq \psi(x_0)A_1({\bar\eps},\lambda)+\left(\inf_{\R^N} \psi\right) A_2({\bar\eps},\lambda)
\end{align*}
for some $\bar\eps<\eps-|x-x_0|$ where
$$
A_1({\bar\eps},\lambda)=\int_{B_{\bar\eps}(0)} \gamma_\lambda(y)\,dy  \quad\mbox{ and
}\quad
A_2({\bar\eps},\lambda)=\int_{\R^N/B_{\bar\eps}(0)} \gamma_\lambda(y)\,dy\,.
$$
Lemma \ref{lem:gamma} implies 
$$ \lim_{\lambda\to 0} A_1({\bar\eps},\lambda) =1 , \qquad  \lim_{\lambda\to 0} A_2({\bar\eps},\lambda) =0,$$
which gives \eqref{eq:psimin2frac}.
The equality for $x\in \supp(\mu)$ follows from Remark \ref{rem:epsilon}.
\end{proof}

We can finally prove the first main result of this section: The continuity of the potential $\psi$. 
As for the Newtonian case, the proof follows essentially Evans proof for the continuity of the solution of the obstacle problem, which was adapted to the fractional case by L. Silvestre \cite{S}.

\begin{proof}[Proof of Proposition \ref{prop:continuitys}]
First, we consider a sequence $x_k\in \R^N$ such that $x_k\to
x_0\notin\supp(\mu)$.
By definition of $\supp(\mu)$, there exists a ball $B_r(x_0)$ such that
$\mu(B_r(x_0))=0$.
We can thus write
$$ 
\psi(x) = \int_{|x-y|\geq r/2} W(x-y)d\mu(y) \qquad \mbox{ for all } x\in
B_{r/2}(x_0)
$$
and we can always assume that $x_k\in B_{r/2}(x_0)$ (for $k$ large enough). 
Hypothesis {\rm (H2${^s}$)}  implies that $W(x)$ is continuous in
$\{x\in\R^N\,;\,
|x|\geq r/2\}$ and due to {\rm(H3${^s}$)} $W$ is compactly supported, so
$$
\lim_{k\to\infty } \psi(x_k)=\psi(x_0).
$$
This proves the continuity of $\psi$ outside of $\supp(\mu)$.
\medskip

We now fix $x_0\in\supp(\mu)$ and consider a sequence $x_k\in\R^N$ such
that $x_k\to x_0$ as $k\to\infty$. We can always assume that $x_k\in
B_\eps(x_0)$ for all $k$.
In particular, if $x_k\in \supp(\mu)$ then \eqref{eq:psimincstfrac} implies
$\psi(x_k)=\psi(x_0)$,
and so once again $\lim_{k\to\infty } \psi(x_k)=\psi(x_0)$.
So as in the Newtonian case, we are reduced to  considering a subsequence (still
denoted $x_k$) such that
$$ 
x_k\notin\supp(\mu) \mbox{ for all $k$,}\qquad \lim_{k\to\infty} x_k =
x_0\in\supp(\mu).
$$
Since $x_k\in  B_\eps(x_0)$ for all $k$, 
\eqref{eq:psimin2frac} implies
 \begin{equation}\label{eq:limsupfrac}
 \psi(x_k)\geq \psi(x_0) \mbox{ for all $k$}.
 \end{equation}
For all $k$ let $y_k$ be the closest point to $x_k$ which is in $\supp(\mu)\cap
\overline{B_{\eps/2}}(x_0)$. Again, for $k$ large enough, we can assume that
$y_k\in B_\eps(x_0)$ and so
\eqref{eq:psimincstfrac} implies that $ \psi(y_k)=\psi(x_0)$.
We denote $\delta_k = |x_k-y_k|$. Note that $\delta_k$ is  the distance
of $x_k$ to $\supp(\mu)$ for $k$ large enough, and so $\delta_k \leq
|x_k-x_0|\to 0$.
Inequality \eqref{eq:supmeanvaluefrac} implies
$$
 \psi(x_0)  = \psi(y_k )  \geq
\psi*\gamma_{\delta_k}(y_k)
-C\delta_k^\alpha.
$$

Let us define now  
$$
C_0=\inf_{x\in\R} \frac{\gamma_1(x+e)}{\gamma_1(x)}\,,
$$
where $e$ is any unit vector.
The infimum is achieved and it is positive, because
$\lim_{|x|\to\infty}\frac{\gamma_1(x+e)}{\gamma_1(x)}=1$. Furthermore, by
symmetry of
$\gamma$ it does not depend on the choice of $e$. Setting
$e=\frac{x_k-y_k}{\delta_k}$, we obtain
\begin{align}
\gamma_{\delta_k}(x-y_k)-C_0\gamma_{\delta_k}(x-x_k)&=\frac{1}{\delta_k^N}
\left(\gamma_{1}\left(\frac{x-x_k}{\delta_k}+\frac{x_k-y_k}{\delta_k}
\right)-C_0\gamma_{1}\left(\frac{x-x_k}{ \delta_k}\right)\right)\nonumber\\
&=\frac{1}{\delta_k^N}
\left(\gamma_{1}\left(\frac{x-x_k}{\delta_k}+e
\right)-C_0\gamma_{1}\left(\frac{x-x_k}{ \delta_k}\right)\right)\nonumber\\
&\ge 0. \label{eq:c0pos}
\end{align}
We now use \eqref{eq:supmeanvaluefrac} and \eqref{eq:submeanvaluefrac} to write,
for $k$ large enough,
\begin{align*}
\psi(x_0)=\psi(y_k)&\ge\psi*\gamma_{\delta_k}(y_k)-C\delta_k^\alpha\\
&=C_0
\psi*\gamma_{\delta_k}(x_k)+\int_{\R^N}(\gamma_{\delta_k}(y-y_k)-C_0\gamma_{
\delta_k}(y-x_k))\psi(y)\,dy-C\delta_k^\alpha\\
&\ge C_0\psi(x_k)-C_0C\delta_k^{2s}+I_1+I_2-C\delta_k^\alpha,
\end{align*}
where
$$
I_1=\int_{B_{\sqrt{\delta_k}}(y_k)}(\gamma_{\delta_k}(y-y_k)-C_0\gamma_{
\delta_k}(y-x_k))\psi(y)\,dy
$$
and
$$
I_2=\int_{\R^N\setminus
{B_{\sqrt{\delta_k}}}(y_k)}(\gamma_{\delta_k}(y-y_k)-C_0\gamma_{
\delta_k}(y-x_k))\psi(y)\,dy
$$

Using \eqref{eq:c0pos} and the fact that $\psi(y)\geq \psi(x_0)$ in
$B_{\sqrt{\delta_k}}(y_k)$, for $k$
large enough, we get
\begin{align*}
I_1 & \geq  \psi(x_0)
\int_{B_{\sqrt{\delta_k}}(y_k)}\big[\gamma_{\delta_k}(y-y_k)-C_0\gamma_{
\delta_k}(y-x_k)\big] \,dy\\
 & \geq \psi(x_0) \big[1-C_0-\eps_k\big]
\end{align*}
where
$$ 
\eps_k =  \int_{\R^N\setminus
B_{\sqrt{\delta_k}}(y_k)}\big[\gamma_{\delta_k}(y-y_k)-C_0\gamma_{
\delta_k}(y-x_k)\big] \,dy\,
$$
where we used the fact that $\gamma_\lambda$ has unit mass for all $\lambda$.
Making the change of variable  $z=\frac{y-y_k}{\delta_k}$ and using the notation
$e=\frac{y_k-x_k}{\delta_k}$, we find 
\begin{align*}
\eps_k =\int_{\R^N\setminus B_{\sqrt{\delta_k}}(0)}(\gamma_{1}
(z)-C_0\gamma_{1}(z+e))\,dz
\end{align*}
and so  
$$
\lim_{k\to \infty} \eps_k =0.
$$
Now we turn to $I_2$. Using \eqref{eq:c0pos} and Lemma \ref{lem:below},
we get
$$
I_2\ge \left(\inf_{\R^N} \psi\right)
\int_{\R^N\setminus
B_{\sqrt{\delta_k}}(y_k)}(\gamma_{\delta_k}(y-y_k)-C_0\gamma_{
\delta_k}(y-x_k))\,dy= \eps_k\inf_{\R^N} \psi.
$$

Combining all the above estimates, we conclude that, for $k$ large enough,
$$
\psi(x_0)=\psi(y_k)\ge C_0 \psi(x_k)+(1-C_0)
\psi(x_0)-C\delta_k^\alpha-C_0C_*\delta_k^{2s}-\eps_k \left(\psi(x_0)+
\inf_{\R^N} \psi\right).
$$
We deduce
$$
\limsup_{k\to \infty} \psi(x_k)\le\psi(x_0),
$$
which, together with \eqref{eq:limsupfrac}, implies
$$
\lim_{k\to \infty} \psi(x_k)=\psi(x_0)
$$
and completes the proof.
\end{proof}


\subsection{Regularity of the density function}

In order to apply known regularity results for the fractional obstacle problem
(as found, for instance, in \cite{S}), we need to show that $\psi$ solves  a fractional obstacle problem in the whole of $\R^N$. 

It is possible to do this  as follows:
The set $\supp(\mu)+B_{\eps/4} = \{x+y\, ;\, x\in \supp(\mu),\; y\in
B_{\eps/4}(0)\}$ is an open set in $B_{R+1}(0)$. In particular, it is the
countable union of its connected components $A_i$. 
Furthermore, since $\supp(\mu)$ is compact, there are only
finitely many $A_i$.

For all $i$, any two points $x_1$, $x_2$ in $\supp(\mu)\cap 
A_i$ will satisfy $\psi(x_1)=\psi(x_2)$, by the minimality of the connected
component and
Corollary~\ref{cor:minsuppfrac}. We define $D_i= \supp(\mu)\cap A_i$,
we denote $C_i = \psi|_{D_i}$ and we consider a smooth function $f$
such that $$f \leq  C_i \mbox{ in }  A_i$$
$$f = C_i \mbox{ on } D_i+B_\frac{\eps}{16}$$
$$ f = \inf W \mbox{ outside } \cup_i D_i+B_{\eps/8}.$$
We can find such smooth function, because $D_i$ are at least separated $\eps/4$
from each other, and if they are closer than $\eps$ then the constant $C_i$
has to match because of Corollary~\ref{cor:minsuppfrac}. 
The potential function
$\psi$ then solves the following obstacle problem in $\R^N$:
\begin{equation}\label{eq:fracobstaclen}
\left\{
\begin{array}{cl}
\psi \geq f,\quad-\Delta \psi  \geq  - F(x) & \quad \mbox{ in }\R^N \\
-(\Delta)^s\psi =  - F(x), & \quad \mbox{ in }  \{\vphi >f\} 
\end{array}
\right.
\end{equation} 
where $F=-(-\Delta)^s W_a*\mu$.

Using this obstacle problem formulation, we can use the following proposition which
is the fractional analog of Proposition \ref{prop:regularity} (See L. Silvestre \cite{S}):

\begin{proposition}\label{prop:fracregularity}
Let $\psi$ be the solution of the obstacle problem \eqref{eq:fracobstaclen}.
If $f\in C^2(\R^N)$ and  $W_a*\mu$ is in $C^{\beta}(\R^N)$ with $\beta>0$. Then $\psi\in
C^{\alpha}(\R^N)$
for every $\alpha< \min(\beta,1+s)$ (with the notation  $C^\alpha=C^{1,\alpha-1}$ if $\alpha> 1$).
\end{proposition}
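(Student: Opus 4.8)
The plan is to subtract from $\psi$ a particular solution of the fractional Poisson equation with right‑hand side $-F$, thereby reducing \eqref{eq:fracobstaclen} to the fractional obstacle problem with \emph{zero} right‑hand side and a sufficiently regular obstacle, and then to invoke the regularity theory of L. Silvestre \cite{S} for that normalized problem (see also \cite{CSS} for the sharp threshold).

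\textbf{Step 1: removing the right‑hand side.} Set $g:=W_a*\mu$ and $\tilde\psi:=\psi-g$. Since $W_a$ and $\mu$ are both compactly supported, $g$ is a compactly supported function in $C^\beta(\R^N)$; moreover, since $(-\Delta)^sW_a\in L^p_{loc}(\R^N)$ by {\rm(H2${}^s$)} and the fractional Laplacian commutes with convolution by the probability measure $\mu$, we have $(-\Delta)^sg=\bigl((-\Delta)^sW_a\bigr)*\mu=-F$ in $\mathcal D'(\R^N)$. Hence the relations in \eqref{eq:fracobstaclen} become $(-\Delta)^s\tilde\psi\ge0$ in $\R^N$ and $(-\Delta)^s\tilde\psi=0$ in $\{\psi>f\}$, while $\tilde\psi\ge f-g=:\tilde f$ and $\{\psi>f\}=\{\tilde\psi>\tilde f\}$. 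Thus $\tilde\psi$ solves
\[
\min\bigl(\tilde\psi-\tilde f,\ (-\Delta)^s\tilde\psi\bigr)=0\qquad\text{in }\R^N,
\]
with obstacle $\tilde f=f-g\in C^{\min(2,\beta)}(\R^N)$ (as $f\in C^2(\R^N)$ and $g\in C^\beta(\R^N)$). I would also record that $\tilde\psi=V_s*\mu\ge0$ with $\tilde\psi(x)\to0$ as $|x|\to\infty$, while $\tilde f$ is constant outside a compact set, so the reduced problem is exactly of the type covered in \cite{S}.

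\textbf{Step 2: applying the fractional obstacle regularity theory and adding back.} By the regularity theory for the fractional obstacle problem \cite{S}, the solution is as regular as its obstacle up to the threshold $C^{1,s}$; applied to $\tilde\psi$ with obstacle in $C^{\min(2,\beta)}(\R^N)$, this gives $\tilde\psi\in C^{\gamma}(\R^N)$ for every $\gamma<\min(\beta,1+s)$ (here we use $s<1$, so that $\min(2,\beta,1+s)=\min(\beta,1+s)$). Finally, writing $\psi=\tilde\psi+g$ with $g\in C^\beta(\R^N)$ and $\tilde\psi\in C^\gamma$ for all $\gamma<\min(\beta,1+s)$, I conclude $\psi\in C^\alpha(\R^N)$ for every $\alpha<\min(\beta,1+s)$, with the convention $C^\alpha=C^{1,\alpha-1}$ when $\alpha>1$.

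\textbf{Main obstacle.} The substantive input is Silvestre's theorem; the work on our side is the reduction of Step 1, whose only delicate points are (i) justifying that $g=W_a*\mu$ is genuinely a particular solution of $(-\Delta)^s(\cdot)=-F$ in the distributional sense, even though $g$ is only $C^\beta$ with possibly $\beta<2s$, and (ii) verifying that after the subtraction the hypotheses under which the cited estimates are stated — the far‑field behaviour of $\tilde\psi$ and the regularity demanded of the obstacle — are actually met. Everything past that is bookkeeping of Hölder exponents.
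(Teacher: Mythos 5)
Your reduction---subtracting $g=W_a*\mu$, which satisfies $(-\Delta)^s g=-F$, so that \eqref{eq:fracobstaclen} becomes the zero-right-hand-side fractional obstacle problem for $\tilde\psi=V_s*\mu$ with obstacle $f-g\in C^{\min(2,\beta)}(\R^N)$, followed by Silvestre's regularity theory and adding $g$ back---is precisely the argument the paper intends: the paper offers no proof of this proposition and simply cites \cite{S}, and the hypothesis on $W_a*\mu$ appears in the statement exactly because it becomes part of the obstacle after this subtraction. The two points you flag (the distributional identity $(-\Delta)^s(W_a*\mu)=\bigl((-\Delta)^s W_a\bigr)*\mu$ for the truncated, compactly supported $W_a$ convolved with the compactly supported measure $\mu$, and the far-field normalization of $\tilde\psi$ and $f-g$ required to match the setting of \cite{S}) are the only places needing care, and both are routine under {\rm(H2${}^s$)} and {\rm(H3${}^s$)}.
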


\begin{proof}[Proof of Theorem \ref{thm:regularityfrac}]
We can now prove our main result by proceeding as in the proof of Theorem 
\ref{thm:regularity} (ii):

First, (H2${^s}$) implies that $(-\Delta)^s W_a*\mu \in L^p$ with $p>\frac{N}{2s}$ and so
$ W_a*\mu \in C^\alpha(\R^N) $ for $\alpha=2s-\frac{N}{p}$.
Proposition \ref{prop:fracregularity} implies that $\psi \in C^\alpha$.
Since $\mu = (-\Delta)^s   \psi-(-\Delta)^s W_a*\mu$, we can write
$$ 
W_a*\mu = W_a*(-\Delta)^s   \psi -W_a*((-\Delta)^s W_a*\mu)=  (-\Delta)^s W_a*   (\psi
+W_a*\mu).
$$
Using the fact that $(-\Delta)^s W_a\in W^{\eps,1}(\R^N)$ and Lemma \ref{lem:bootstrap}, this implies that
$$
W_a*\mu \in  \mathcal C^{\alpha +\eps}(\R^N).
$$

We iterate this argument until we get that $W_a*\mu\in C^{1,s}(\R^N)$, which then implies (by Proposition \ref{prop:fracregularity}) that  $\psi\in
C^{1,\gamma}(\R^N)$ for all $\gamma<s$. 
\medskip

Finally, this implies that
$\mu=(-\Delta)^s\psi-(-\Delta)^sW_a*\mu\in C^\beta(\R^N)$ for all $\beta<1-s$
(see Proposition 2.7 in \cite{S}) and complete the proof of Theorem \ref{thm:regularityfrac}.
\end{proof}


\section{Uniqueness: Proof of Theorem {\rm \ref{them:fracunique}}} \label{sec:uniquenessproof}


We now turn to the proof of our uniqueness result when $W_a=K|x|^2$. 
We assume that $\mu_0\in \P(\R^N)$ is  $d_2$ local minimizer of $E$, and we recall (see comment before Theorem  \ref{them:fracunique}) that such a minimizer is  compactly supported.
We will only prove the result for potential satisfying (H2s), since the Newtonian case is easier.

From our work in the previous section, we know that $\mu_0=\rho_0d\mathcal L^N$ where $\rho_0$ is a continuous function. Furthermore, we recall that for $d_2$-minimizers, the constant $C_i$ is the same on all connected component of $\supp(\mu_0)$ (and equal to $ 2E[\mu_0]$).
We deduce that $h=V_s*\mu_0$ satisfies
\begin{equation*}
\left\{
\begin{array}{rll}
h &\geq C-K|x|^2*\rho_0,\quad & \mbox{ in } \R^N \\
(-\Delta)^s h &\geq  0, \quad & \mbox{ in } \R^N \\
(-\Delta)^s h &=  0, & \mbox{ in }\{h >C-K|x|^2*\rho_0\} \\
\end{array}
\right.
\end{equation*}
for some constant $C$.

By translational invariance, we can always assume that the center of mass of
$\mu_0$ is zero,  and so
$$
K|x|^2*\rho_0=K\left[|x|^2+\int_{\R^N}
|y|^2\;\rho_0(y)\,dy\right]=K|x|^2
+C_{\rho_0}.
$$
Finally, using the fact that $\rho_0\in L^\infty(\R^N)$ and that $\rho_0$ has compact
support, it is easy to see that $h(x)\to 0$ as $|x|\to\infty$. 
Hence, $h$ solves the following  fractional obstacle
problem in $\R^N$:
\begin{equation}\label{eq:obstacleunique3}
\left\{
\begin{array}{rlc}
\vphi &\geq C-K|x|^2*\rho_0,\quad & \mbox{ in } \R^N \\
(-\Delta)^s \vphi &\geq  0, \quad & \mbox{ in } \R^N \\
(-\Delta)^s\vphi &=  0, & \mbox{ in }\{\vphi >C-K|x|^2*\rho_0\} \\
\vphi&\to0,& \mbox{when}\; x\to\infty,
\end{array}
\right.
\end{equation}
for some constant $C>0$. 

By Theorem 3.1 in \cite{CVobs}, we know that for any given $C>0$
\eqref{eq:obstacleunique3} has a unique solution $\vphi_C$. 
Furthermore, this solution is radially
symmetric and satisfies the scaling property:
$$
\vphi_C(x)=C\vphi_1\left(\frac{x}{C^{\frac{1}{2}}}\right).
$$ 
In particular, the function
$$
\rho_C(x):=(-\Delta)^s \vphi_C(x)=C^{1-s}\rho_1\left(\frac{x}{C^{\frac{1}{2}}}\right)
$$ 
satisfies
$$
\int_{\R^n}\rho_C(x)dx=C^{\frac{n}{2}+1-s}\int_{\R^n}\rho_1(x)dx.
$$
Therefore, there is a unique $C_0>0$ for which $\rho_{C_0}$ has unit
mass. 
It follows that $\rho_0=\rho_{C_0}$, which implies the  uniqueness of $\rho_0$ and the fact that it is radially symmetric.
\medskip

In the Newtonian case, one can then show that  the coincidence set is a
ball, use the maximum principle. The case involving the fractional Laplacian
is harder to characterize due to the non-locality of the problem.

\subsection{A different proof in the Newtonian case}\label{sec:unique}
In this section, we give an alternative proof of the uniqueness of global minimizer in the Newtonian case.
To simplify the notations, we assume that $W=V+\frac{|x|^2}{2N}$.

In particular, we have $F(x)=1$ and so minimizers of our energy (global or local) are of the form $\mu_0=
\chi_\Omega\, d\mathcal L^N$ with $|\Omega|=1$.
Using translation invariance, we can further assume that $\mu_0$ has zero center of mass.

Next, we use a simple scaling argument to derive an important relation:
Consider the dilation maps
$\mathcal{T}_\lambda:\R^N\to\R^N$ defined by $\mathcal{T}_\lambda(x)=\lambda x$
with $\lambda>0$. Given any $\mu\in\P(\R^N)$, we define
$\mu^\lambda=\mathcal{T}_\lambda\#\mu$. 
Using the fact that the (Newtonian) repulsive and (quadratic) attractive parts of the potential $W$ scales differently, we can re-write the energy of $\mu^\lambda$ in the following way:
\begin{equation}\label{eq:Escaling}
E[\mu^\lambda]=\frac{1}{\lambda^{N-2}}E_r[\mu]+\lambda^2 E_a[\mu],
\end{equation}
where $E_r[\mu]$ is the energy associated to the repulsive Newtonian interaction
potential $V$ and $E_a[\mu]$ the energy associated to the attractive quadratic
confinement ${|x|^2}/{2N}$.

For any $d_2$-local minimizer, the function
$\lambda\mapsto E[\mu^\lambda_0]$ must have a minimum at $\lambda=1$.
By
differentiating \eqref{eq:Escaling} we deduce
$$
\frac{dE[\mu^\lambda_0]}{d\lambda}\bigg{|}_{\lambda=1}=(2-N)E_r[\mu_0]+2E_a[
\mu_0] =0.
$$
Using this identity, we can rewrite the energy for $\mu_0$ as
\begin{equation}\label{energysets}
E[\mu_0]=\left(\frac{N-2}{2}+1\right)E_a[\mu_0]=\left(\frac{N-2}{2}
+1\right)\int_{\R^N} \frac{|x|^2}{N} d\mu_0(x).
\end{equation}

It remains to see that  this
implies that $\mu_0= \chi_{B_m}$ where $B_m$ is the ball centered at $0$ with
$|B_m|=1$.
This follows from the following two facts:
\begin{enumerate}
\item Among all sets
$\Omega$ of unit area and zero center of mass, the unique minimizer of 
\begin{equation*}
\int_\Omega |x|^2 dx
\end{equation*}
is the ball $B_m$.
\item The measure $\mu_0=\chi_{B_m}$ satisfies \eqref{energysets}.
\end{enumerate}

The second point is proved in \cite{CFT}, and it follows from the fact that $\chi_{B_m}$ is a $d_2$-local minimizer of $E$ under dilation. 

In order to prove the first point, we consider any measurable set $\Omega$ with zero center of mass and measure $1$, and we decompose it as
$\Omega=\Omega_a\cup\Omega_b$ with $\Omega_a=\Omega\cap B_m$ and
$\Omega_b=\Omega/\Omega_a$. Take the set $B_a=B_m/\Omega_a$. Since
$|B_m|=|\Omega|=1$, we must have $|\Omega_b|=|B_a|$. 
If $|B_a|=|\Omega_b|>0$, then we have
\begin{align*}
\int_{B_m} |x|^2 dx &=\int_{\Omega_a} |x|^2 dx +
\int_{B_a} |x|^2 dx\\
&< \int_{\Omega_a} |x|^2 dx + r_m^2 |B_a| = \int_{\Omega_a} |x|^2 dx + r_m^2
|\Omega_b|\\
& < \int_{\Omega_a} |x|^2 dx + \int_{\Omega_b} |x|^2 dx= \int_{\Omega} |x|^2
dx\,,
\end{align*}
where $r_m$ denotes the radius of $B_m$. 
This completes the proof of uniqueness of global minimizers. 

\medskip

Note that if $\mu_0$ is a $d_2$-local minimizer (rather than a global minimizer) with zero center of mass, we still have that
$\mu_0= \chi_\Omega\, d\mathcal L^N$ with $|\Omega|=1$, and \eqref{energysets} still holds.
Furthermore, if $\Omega$ is not the  ball $B_m$, we can show that by moving a  small amount of mass to a small ball in $B_m\setminus\Omega$, we obtain a set $\Omega'$
such that $\chi_{\Omega'}$ is close to $\chi_{\Omega} $ in the $d_2$ topology and 
$$\int_{\Omega'} |x|^2 dx< \int_\Omega |x|^2 dx$$
However, since $\chi_{\Omega'}$ might not satisfy \eqref{energysets}, we cannot get a contradiction. So this proof only applies to global minimizer, and we see that the fact that the measure $\mu_0=\chi_{B_m}$ satisfies \eqref{energysets} is really essential.

\appendix

\section{Mean value formula}
For the sake of completeness, we recall here the derivation of the mean-value formula (Lemma \ref{lem:meanvalue}).

We recall that $V(x)$ denotes the fundamental solution of the Laplace equation,
and we remove the singularity at $x=0$ by gluing a paraboloid to $V$ in the ball of
radius $1$. More precisely, we define a $C^{1,1}(\R^N)$ function $\Gamma$ by 
\begin{equation*}
\left\{
\begin{array}{lll}
\Gamma(x)&=V(x)&\mbox{in $|x|> 1$} \\
\Gamma(x)&=-\frac{1}{2|B_1|}|x|^2+C&\mbox{in $|x|\le 1$}
\end{array}
\right.
\end{equation*}
Given $\lambda>0$, also introduce
$\Gamma_\lambda=\frac{1}{\lambda^{N-2}}\Gamma(\frac{x}{\lambda})$. 
We note that the function
$\Gamma_\lambda$ coincides with $V(x)$ outside of the ball of radius
$\lambda$. Furthermore, if $\lambda_1\le\lambda_2$, then
$\Gamma_{\lambda_1}\ge\Gamma_{\lambda_2}$. Finally,  we note that
$-\Delta\Gamma_\lambda$ is an
approximation of the identity.  In fact, we have 
$-\Delta\Gamma_\lambda=\frac{1}{|B_\lambda|}\chi_{B_\lambda}$. Using these
facts, we can now prove Lemma \ref{lem:meanvalue}:

\begin{proof}[Proof of Lemma \ref{lem:meanvalue}]
Take $x\in B_R$ and $0<\lambda_1<\lambda_2<1$, then
$\Gamma_{\lambda_1}-\Gamma_{\lambda_2}$ is $C^{1,1}(\R^N)$ and compactly
supported
on the ball of radius $\lambda_2$. Using H\"older inequality we get
$$
\int_{\R^N}\Delta u(x+y)(\Gamma_{\lambda_1}(y)-\Gamma_{\lambda_2}(y))dy\le
||\Delta
u||_{L^p(B_{1}(x))}||\Gamma_{\lambda_1}-\Gamma_{\lambda_2}||_{L^q(B_{\lambda_2})
}.
$$
Integrating by parts the left hand side also gives
$$
\int_{\R^N}\Delta
u(x+y)(\Gamma_{\lambda_1}(y)-\Gamma_{\lambda_2}(y))dy=\frac{1}{|B_{\lambda_2}|}
\int_ { B_{\lambda_2}(x)}u(y)dy-\frac{1}{|B_{\lambda_1}|}
\int_ { B_{\lambda_1} (x)}u(y)dy
$$
and so
$$
\frac{1}{|B_{\lambda_1}|}\int_ { B_{\lambda_1}(x)}u(y)dy\ge
\frac{1}{|B_{\lambda_2}|}\int_ { B_{\lambda_2}(x)}u(y)dy-||\Delta
u||_{L^p(B_{R+1})}||\Gamma_{\lambda_1}-\Gamma_{\lambda_2}||_{L^q(B_{\lambda_2})}
. 
$$
Using the monotonicity of $\Gamma_\lambda$ with respect to $\lambda$, we have
$$
||\Gamma_{\lambda_1}-\Gamma_{\lambda_2}||_{L^q(B_{\lambda_2})}\le||\Gamma_{
\lambda_1}||_{L^q(B_{\lambda_2})}\le||V||_{L^q(B_{\lambda_2})}
$$
where a simple computation yields
$$
||V ||^q_{L^q(B_{\lambda_2})}=C\int_{B_{\lambda_2}}\frac{1}{|x|^{(N-2)q}} \, dx
=C\int_0^{\lambda_2}r^{N-1-(N-2)q}\, dr=C\lambda_2^{N-(N-2)q}
$$
when $N\geq 3$, and 
$$
||V ||^q_{L^q(B_{\lambda_2})}=C\int_{B_{\lambda_2}}|\log|x||^q\, dx 
=C\int_0^{\lambda_2}|\log r|^q r\, dr\leq |\log \lambda_2 |^q \lambda_2^2
$$
when $N=2$. We deduce that
$$
\frac{1}{|B_{\lambda_1}|}\int_ { B_{\lambda_1}(x)}u(y)dy\ge
\frac{1}{|B_{\lambda_2}|}\int_ { B_{\lambda_2}(x)}u(y)dy-C \lambda_2^\alpha 
$$
with $\alpha = 2-\frac N p$ when $N\geq 3$ (and the corresponding inequality
when $N=2$).
Finally, taking $\lambda_1\to 0$ using the continuity of $u$ (given by Sobolev's
embedding theorems), we obtain the desired result.
\end{proof}

\begin{remark}\label{rem:meanvalue*}
If we can assure that $u(x)\ge\lim_{r\to0^+}\frac{1}{|B_r|}\int_{B_r(x)}u(y)dy$
for every $x$, then we can replace the hypothesis of $\Delta u\in
L^p_{loc}(\R^N)$, by
$(\Delta u)_+\in L^p_{loc}(\R^N)$.
\end{remark}


\section{Fractional mean value formula}\label{app:mvf}

We now recall the derivation of the mean-value formula in the fractional case (see \cite{S}):

\begin{proof}[Proof of Lemma \ref{lem:meanvaluefrac}]
Take $0<\lambda_1<\lambda_2<d(x,\partial \Omega)$, then the function
$\Gamma_{\lambda_1}-\Gamma_{\lambda_2}$ is  a $C^{1,1}$ function,
which is non-negative  and compactly supported
in $\Omega$. As in the proof of the usual mean-value formula, we now consider
the integral
$$
\int_{\Omega}(-\Delta)^s u(x+y)(\Gamma_{\lambda_1}(y)-\Gamma_{\lambda_2}(y))dy
$$
An integration by parts yields
$$
\int_{\Omega}(-\Delta)^s
u(x+y)(\Gamma_{\lambda_1}(y)-\Gamma_{\lambda_2}(y))dy=u*\gamma_{\lambda_1}
(x)-u*\gamma_{\lambda_2}(x).
$$

If $(-\Delta)^s u\geq 0$ in $\Omega$, then the integrand in the left had side is
alway non-negative and we immediately deduce
$$ 
u*\gamma_{\lambda_1}(x)\geq u*\gamma_{\lambda_2}(x).
$$
Taking the limit $\lambda_1\to 0$ (using the upper semi-continuity of $u$), we
obtain
$$ 
u(x) \geq u*\gamma_{\lambda_2}(x)
$$
which is the first part of the lemma.

\medskip

If now $(-\Delta)^s u \in L^p_{loc}(\R^N)$, then we use H\"older inequality to
get
$$
\left|\int_{\R^N}(-\Delta)^s
u(x+y)(\Gamma_{\lambda_1}(y)-\Gamma_{\lambda_2}(y))dy\right|\le
||(-\Delta)^s
u||_{L^p(B_{1}(x))}||\Gamma_{\lambda_1}-\Gamma_{\lambda_2}||_{L^q(B_{\lambda_2}
(0))}.
$$
We deduce from the first part of the Lemma that
$$
u(x)\geq u*\gamma_{\lambda_1}
(x)\ge
u*\gamma_{\lambda_2}(x)-||(-\Delta)^s
u||_{L^p(B_{1}(x))}||\Gamma_{\lambda_1}-\Gamma_{\lambda_2}||_{L^q(B_{\lambda_2})
}
.
$$
Using the monotonicity of $\Gamma_\lambda$ with respect to  $\lambda$, we see
that $0\leq\Gamma_{\lambda_1}-\Gamma_{\lambda_2}\leq \Gamma_{\lambda_1}$ and so 
$$
||\Gamma_{\lambda_1}-\Gamma_{\lambda_2}||_{L^q(B_{\lambda_2})}\le||\Gamma_{
\lambda_1}||_{L^q(B_{\lambda_2})}\le|| V_s||_{L^q(B_{\lambda_2})}.
$$
Finally, a simple computation yields
$$
||V_s||^q_{L^q(B_{\lambda_2})}=c_{N,s}^q\int_{B_{\lambda_2}}\frac{1}{|x|^{
(N-2s)q}}
=C \int_0^{\lambda_2}r^{N-1-(N-2s)q}\, dr=C\lambda_2^{N-(N-2)q}
$$
for some constant $C$ depending only on $N$, $s$ and $q$.
We conclude that
$$
u(x)\ge u*\gamma_{\lambda_2}(x)-C ||(-\Delta)^s
u||_{L^p(B_{1}(x))} \lambda_2^\alpha\,,
$$
which is the desired result.
\end{proof}

\section*{Acknowledgement}
JAC acknowledges support from projects MTM2011-27739-C04-02, 2009-SGR-345 from Ag\`encia de Gesti\'o d'Ajuts Universitaris i de Recerca-Generalitat  de Catalunya, the Royal Society through a Wolfson Research Merit Award, and the Engineering and Physical Sciences Research Council (UK) grant number EP/K008404/1. AM was partially supported by NSF Grant DMS-1201426. The authors thank 
MSRI at Berkely where part of this work was carried over during the program on Optimal Transport.

\end{document}